\newtheorem{Thm}{Theorem}[section]
\newtheorem{Prop}[Thm]{Proposition}
\newtheorem{Cor}[Thm]{Corollary}
\newtheorem{Lem}[Thm]{Lemma}
\theoremstyle{definition}
\newtheorem{Def}[Thm]{Definition}
\newtheorem{Exa}[Thm]{Example}
\numberwithin{equation}{section}
\newcommand{\Z}{\mathbb{Z}}
\newcommand{\ZZ}{\mathbb{Z}_{\geq 0}}
\newcommand{\C}{\mathbb{C}}
\newcommand{\g}{\mathfrak{g}}
\newcommand{\wt}{{\rm wt}}
\newcommand{\qin}{{\rm in}}
\newcommand{\qout}{{\rm out}}
\newcommand{\tr}{{\rm tr}}
\newcommand{\Hom}{{\rm Hom}}
\newcommand{\Aut}{{\rm Aut}}
\newcommand{\End}{{\rm End}}
\newcommand{\Irr}{{\rm Irr}}
\newcommand{\Color}{{\rm Color}}
\newcommand{\cl}{{\rm cl}}
\newcommand{\Span}{{\rm span}}
\newcommand{\im}{{\rm im}}
\newcommand{\Ht}{{\rm ht}}
\newcommand{\adjoint}{B^{\rm ad}}
\begin{document}

\title[Quiver Varieties and Path Realizations Arising From Adjoint Crystals of type $A_n^{(1)}$]
{Quiver Varieties and Path Realizations \\ arising from Adjoint
Crystals of type $A_n^{(1)}$}
\author[Seok-Jin Kang]{Seok-Jin Kang$^1$}
%\thanks{$^*$ Corresponding author.}
\thanks{$^1$ This research was supported by KRF Grant \# 2007-341-C00001.}
\address{Department of Mathematical Sciences and Research Institute of Mathematics,
Seoul National University, Gwanak-ro 599, %San 56-1 Sillim-dong,
Gwanak-gu, Seoul 151-747, Korea} \email{sjkang@math.snu.ac.kr}
\author[Euiyong Park]{Euiyong Park$^{1,2}$}
\thanks{$^2$ This research was supported by BK21 Mathematical Sciences Division.}

%%%%%%%
\thanks{$^\dag$ This is accepted for publication by the Transactions of the American Mathematical Society.}
%%%%%%%

\address{Department of Mathematical Sciences, Seoul National University,
Gwanak-ro 599, %San 56-1 Sillim-dong,
Gwanak-gu, Seoul 151-747, Korea}
\email{pwy@snu.ac.kr}

%\date{Jan 12, 2009}
%\urladdr{}
%\dedicatory{}
\subjclass[2000]{17B10, 17B67, 81R10}
\keywords{adjoint crystal, crystal graph, Kac-Moody algebra, path realization, quiver variety}
%\thanks{This paper is in final form and no version of it will be submitted
%for publication elsewhere.}

\begin{abstract}
Let $B(\Lambda_0)$ be the level 1 highest weight crystal of the
quantum affine algebra $U_q(A_n^{(1)})$. We construct an explicit
crystal isomorphism between the geometric realization
$\mathbb{B}(\Lambda_0)$ of $B(\Lambda_0)$ via quiver varieties and
the path realization ${\mathcal P}^{\rm ad}(\Lambda_0)$ of
$B(\Lambda_0)$ arising from the adjoint crystal $\adjoint$.

\end{abstract}

\maketitle

%\tableofcontents

\section*{Introduction}

The theory of {\it perfect crystals} developed in \cite{KKMMNN91}
has a lot of important and interesting applications to the
representation theory of quantum affine algebras and the theory of
vertex models in mathematical physics. In particular, the crystal
$B(\lambda)$ of an integrable highest weight module over a quantum
affine algebra can be realized as the crystal ${\mathcal
P}^{B}(\lambda)$ consisting of $\lambda$-paths arising
from a given perfect crystal $B$. In \cite{BFKL06},
Benkart, Frenkel, Kang and Lee gave a uniform construction of
level 1 perfect crystals for all quantum affine algebras.
These perfect crystals are called the {\it adjoint
crystals} because, when forgetting 0-arrows, they coincide with
the direct sums of the trivial crystals and the crystals of
adjoint or little adjoint representations of finite dimensional
simple Lie algebras.

On the other hand, for a symmetric Kac-Moody algebra $\g$, Lusztig
gave a geometric construction of $U_q^{-}(\g)$ in terms of
perverse sheaves on quiver varieties and introduced the notion of
{\it canonical basis} which yields natural bases for all
integrable highest weight modules as well \cite{L90, L91}. In
\cite{KS97}, Kashiwara and Saito defined a crystal structure on
the set ${\mathbb B}(\infty)$ of irreducible components of
Lusztig's quiver varieties and showed that ${\mathbb B}(\infty)$
is isomorphic to the crystal $B(\infty)$ of $U_q^{-}(\g)$.
Moreover, in \cite{N94, N98}, Nakajima defined a new family of
quiver varieties associated with a dominant integral weight
$\lambda$ and gave a geometric realization of the integrable
highest weight $\g$-module $V(\lambda)$.
% using Borel-Moore homology of quiver varieties.
In \cite{St02}, Saito defined a crystal
structure on the set ${\mathbb B}(\lambda)$ of irreducible
components of certain Lagrangian subvarieties of Nakajima's quiver
varieties, and showed that ${\mathbb B}(\lambda)$ is isomorphic to
the crystal $B(\lambda)$ of $V(\lambda)$.

Therefore, for quantum affine algebras, it is natural to
investigate the crystal isomorphism between the geometric
realization ${\mathbb B}(\lambda)$ and the path realization
${\mathcal P}^{B}(\lambda)$ for various perfect crystals $B$. In
this paper, we will focus on the level 1 highest weight crystal
$B(\Lambda_0)$ of the quantum affine algebra $U_q(A_n^{(1)})$, and
construct an explicit crystal isomorphism between ${\mathbb
B}(\Lambda_0)$ and ${\mathcal P}^{\rm ad}(\Lambda_0)$, where
${\mathcal P}^{\rm ad}(\Lambda_0)$ is the path realization arising
from the adjoint crystal $\adjoint$. We will also give a geometric
interpretation of the {\it fundamental isomorphism theorem for
perfect crystals}: $B(\Lambda_0) \cong B(\Lambda_0) \otimes
\adjoint$. One of the key ingredients of our construction is the
explicit 1-1 correspondence between ${\mathbb B}(\Lambda_0)$ and
${\mathcal Y}(\Lambda_0)$ discovered in \cite{FS03, Sv06}, where
${\mathcal Y}(\Lambda_0)$ is the crystal consisting of
$(n+1)$-reduced Young diagrams. We hope our construction will
provide a new insight toward the understanding of the connection
between ${\mathbb B}(\Lambda_0)$ and ${\mathcal
P}^{\rm ad}(\Lambda_0)$ for {\it all} quantum affine algebras.

\vskip 3mm

\noindent
{\bf Acknowledgments.} The authors wish to express their
sincere gratitude to National Institute for Mathematical Sciences in
Daejon, Korea, for their generous support during the special program (Thematic Program TP0902)
on representation theory in 2009.

 \vskip 3em

\section{The quantum affine algebra $U_q(A_n^{(1)})$}

Let $I=\Z/(n+1)\Z$ be the index set. The {\it affine Cartan
datum of $A_n^{(1)}$-type} consists of

(i) the affine Cartan matrix $$A=(a_{ij})_{i,j\in I} = \left(
\begin{matrix} 2 & -1 & 0 & \cdots & -1 \\ -1 & 2 & -1 & \cdots & 0 \\
\vdots &  & \ddots & & \vdots \\ 0 & \cdots & -1 & 2 & -1 \\ -1 & 0
& \cdots & -1 & 2 \end{matrix} \right),$$

(ii) dual weight lattice $P^{\vee}= \bigoplus_{i=0}^{n}\Z h_i \oplus
\Z d$,

(iii) affine weight lattice $P = \bigoplus_{i=0}^n \Z \Lambda_i
\oplus \Z \delta \subset \mathfrak{h}^*$, where
$$\mathfrak{h} = \C
\otimes P^{\vee},\ \Lambda_i(h_j) = \delta_{ij},\ \Lambda_i(d)=0,\
\delta(h_i)=0,\ \delta(d)=1\ (i,j\in I),$$

(iv) the set of simple coroots $\Pi^{\vee} = \{ h_i|\ i\in I \}$,

(v) the set of simple roots $\Pi = \{ \alpha_i|\ i\in I \}$ given by
$$\alpha_j(h_i) = a_{ij},\ \alpha_j(d)=\delta_{0,j}\ (i,j\in
I).$$ The free abelian group $Q = \bigoplus_{i=0}^n \Z \alpha_i$
is called the {\it root lattice} and the semigroup $Q^+ =
\sum_{i=0}^n \ZZ \alpha_i$ is called the {\it positive root
lattice}. For $\alpha = \sum_{i\in I} k_i \alpha_i \in Q^+$,
the number $\Ht(\alpha) = \sum_{i\in I} k_i$ is called the {\it height} of $\alpha$.
For $\lambda, \mu \in \mathfrak{h}^*$, we define
$\lambda \ge \mu$ if and only if $\lambda - \mu \in Q^{+}$. The
elements in $ P^+ = \{ \lambda \in P|\ \lambda(h_i) \ge 0,\ i\in I
\} $ are called the {\it dominant integral weights}. Note that the
minimal {\it imaginary root} is given by $\delta = \alpha_0 +
\alpha_1 + \cdots + \alpha_n \in Q^{+}$. The element $c=h_0 + h_1
+ \cdots + h_n \in P^{\vee}$ is called the {\it canonical central
element}.

Given $n \in \Z$ and any symbol $q$, we define
$$ [n]_q = \frac{q^n - q^{-n}}{q-q^{-1}}$$
and set $[0]_q ! = 1$, $[n]_q! = [n]_q[n-1]_q\cdots [1]_q $. For
$m\ge n \ge 0 $, let
$$ \left[
     \begin{array}{c}
       m \\
       n \\
     \end{array}
   \right]_q
   =\frac{[m]_q !}{[n]_q! [m-n]_q !}.
 $$
\begin{Def}
The {\it quantum affine algebra} $U_q(\g)=U_q(A_n^{(1)})$ is an
associative algebra over $\C(q)$ with $1$ generated by $e_i,\ f_i\
( i \in I)$ and $q^{h}\ (h \in P^{\vee})$ satisfying the defining
relations:
\begin{enumerate}
\item[(a)] $q^0=1,\ q^h q^{h'} = q^{h+h'}$ for $h,h' \in
P^{\vee}$,
\item[(b)] $q^he_iq^{-h}=q^{\alpha_i(h)}e_i$ for $h \in P^{\vee}$,
\item[(c)] $q^hf_iq^{-h}=q^{-\alpha_i(h)}f_i$ for $h \in P^{\vee}$,
\item[(d)] $e_if_j- f_je_i = \delta_{ij}(q^{h_i} - q^{-h_i})/(q - q^{-1})$ for $i,j \in I$,
\item[(e)] $\sum_{k=0}^{1-a_{ij}}(-1)^k
{1-a_{ij} \brack k}_q\  e_i^{1-a_{ij}-k}e_j e_i^k =0$ for $i \ne j$,
\item[(f)] $\sum_{k=0}^{1-a_{ij}}(-1)^k
{1-a_{ij} \brack k}_q\  f_i^{1-a_{ij}-k}f_j f_i^k =0$ for $i \ne j$.
\end{enumerate}
\end{Def}

The definition of {\it category $O_{\rm int}^q$}, {\it Kashiwara
operators} and {\it crystal bases} can be found in \cite{K91, HK02},
etc. It was shown in \cite{K91} that every $U_q(\g)$-module in the
category $O_{\rm int}^q$ has a crystal basis. The notion of {\it
abstract crystals} was introduced in \cite{K93}. For convenience, we
recall some of the basic definitions and properties of abstract
crystals.

\begin{Def} An {\it abstract crystal} associated with the Cartan datum $(A, \Pi, \Pi^{\vee}, P, P^{\vee})$ is a set
$B$ together with the maps $\wt: B \to P,\ \tilde{e}_i, \tilde{f}_i: B \to
B \sqcup \{0\},$ and $\varepsilon_i, \varphi_i: B \to \Z \cup \{ -\infty \}\ (i\in I)$ satisfying
the following properties:
\begin{enumerate}
\item[(a)] $\varphi_i(b) = \varepsilon_i(b) + \langle h_i, \wt(b) \rangle$ for all $i\in I$,
\item[(b)] $\wt(\tilde e_i b) = \wt(b)+\alpha_i$ if $\tilde{e}_ib \in B$,
\item[(c)] $\wt(\tilde f_i b) = \wt(b)-\alpha_i$ if $\tilde{f}_ib \in B$,
\item[(d)] $\varepsilon_i(\tilde e_i b)= \varepsilon_i(b)-1,\ \varphi_i(\tilde e_i b) = \varphi_i(b)+1$ if $\tilde e_i b \in B$,
\item[(e)] $\varepsilon_i(\tilde f_i b)= \varepsilon_i(b)+1,\ \varphi_i(\tilde f_i b) = \varphi_i(b)-1$ if $\tilde f_i b \in B$,
\item[(f)] $\tilde f_i b = b'$ if and only if $b=\tilde e_i b'$ for $b,b'\in B,\ i\in I$,
\item[(g)] if $\varphi_i(b)=- \infty$ for $b \in B$, then $\tilde e_i b = \tilde f_i b = 0$.
\end{enumerate}
\end{Def}
We often say that $B$ is a {\it $U_q(\g)$-crystal}. We denote
$B_{\lambda} = \{ b \in B|\ \wt(b)=\lambda \}$ so that $B =
\bigsqcup_{\lambda \in P} B_\lambda$.

\begin{Exa} \
\begin{enumerate}

\item Let $(L,B)$ be a crystal basis of $M \in O_{\rm int}^q$. Then $B$ has a crystal structure,
where the maps
$\varepsilon_i,\ \varphi_i$ are given by
$$ \varepsilon_i(b) = \max\{ k\ge0|\ \tilde e_i b \ne 0 \},
\qquad \varphi_i(b) = \max\{ k\ge0|\ \tilde f_i b \ne 0 \}. $$ In
particular, we denote by $B(\lambda)$ the crystal of the irreducible
highest weight module $V(\lambda)$ with highest weight $\lambda \in
P^+$.

\item Let $(L(\infty), B(\infty))$ be a crystal basis of $U_q^-(\g)$.
Then $B(\infty)$ has a crystal structure, where the maps $\varepsilon_i,\ \varphi_i$ are given by
$$ \varepsilon_i(b) = \max\{ k\ge0|\ \tilde e_i b \ne 0 \}, \qquad \varphi_i(b)=\varepsilon_i(b)+\langle h_i, \wt(b) \rangle .$$

\item For $\lambda \in P$, let us consider $T_\lambda=\{ t_\lambda \}$ with the maps:
\begin{align}
& \wt(t_\lambda)=\lambda,\quad \tilde e_i t_\lambda = \tilde f_i t_\lambda = 0 \text{  for  } i\in I, \nonumber \\
& \varepsilon_i(t_\lambda) = \varphi_i(t_\lambda) = -\infty \text{  for  } i \in I. \nonumber
\end{align}
Then $T_\lambda$ is a crystal.

\item Let $C = \{c \}$. We define the maps
$$ \wt(c)=0,\quad \tilde e_i c= \tilde f_i c = 0, \quad \varepsilon_i(c) =  \varphi_i(c) = 0\ \  (i \in I).$$
Then $C$ is a crystal.
\end{enumerate}
\end{Exa}

\begin{Def}\label{D:mor}
Let $B_1$ and $B_2$ be crystals.

\begin{enumerate}

\item A map $\psi: B_1\rightarrow B_2$ is a {\it crystal morphism}
if it satisfies the following properties:

\begin{enumerate}
\item for $b\in B_1$, we have
\begin{equation*}
\text{$\wt(\psi(b))=\wt(b)$,
$\varepsilon_i(\psi(b))=\varepsilon_i(b)$,
$\varphi_i(\psi(b))=\varphi_i(b)$ for all $i\in I$,}
\end{equation*}
\item for $b\in B_1$ and $i\in I$ with $\tilde f_i b\in B_1$, we have
$\psi(\tilde f_i b)=\tilde f_i \psi(b)$. \label{cond:crysmor2}
\end{enumerate}

\item A crystal morphism $\psi: B_1\rightarrow B_2$ is called {\it
strict} if
\begin{equation*}
\psi(\tilde e_i b)=\tilde e_i\psi(b),\,\, \psi( \tilde f_i b)=\tilde
f_i \psi(b)\quad\text{for all $i\in I$ and $b\in B_1$.}
\end{equation*}
Here, we understand $\psi(0)=0$.

\item $\psi$ is called an {\em embedding} if the underlying map
$\psi: B_1\rightarrow B_2$ is injective.

\end{enumerate}
\end{Def}

Let $B_1$ and $B_2$ be crystals. The {\it tensor
product} $B_1 \otimes B_2$ is defined to be the
set $B_1 \times B_2$ together with the following
maps:
\begin{enumerate}
\item[(a)] $\wt(b_1\otimes b_2) = \wt(b_1)+ \wt(b_2),$
\item[(b)] $\varepsilon_i(b_1\otimes b_2) = \max\{ \varepsilon_i(b_1),\ \varepsilon_i(b_2) - \langle h_i, \wt(b_1) \rangle \}$,
\item[(c)] $\varphi_i(b_1\otimes b_2) = \max\{ \varphi_i(b_2),\ \varphi_i(b_1) + \langle h_i, \wt(b_2) \rangle \}$,
\item[(d)] $\tilde e_i(b_1\otimes b_2) = \left\{
                                     \begin{array}{ll}
                                       \tilde e_i b_1 \otimes b_2 & \hbox{ if } \varphi_i(b_1) \ge \varepsilon_i(b_2), \\
                                       b_1 \otimes \tilde e_i b_2 & \hbox{ if } \varphi_i(b_1) < \varepsilon_i(b_2),
                                     \end{array}
                                   \right.$
\item[(e)] $\tilde f_i(b_1\otimes b_2) = \left\{
                                     \begin{array}{ll}
                                       \tilde f_i b_1 \otimes b_2 & \hbox{ if } \varphi_i(b_1) > \varepsilon_i(b_2), \\
                                       b_1 \otimes \tilde f_i b_2 & \hbox{ if } \varphi_i(b_1) \le \varepsilon_i(b_2).
                                     \end{array}
                                   \right.$
\end{enumerate}
It was shown in \cite{K93} that there is a unique strict crystal embedding
$$B(\lambda) \hookrightarrow B(\infty) \otimes T_{\lambda} \otimes
C$$ sending $u_{\lambda}$ to $\mathbf{1} \otimes t_{\lambda} \otimes
c$. Here, $u_{\lambda}$ is the highest weight element of $B(\lambda)$.  We denote by $\iota_{\lambda}$ the composition of the strict
embedding and the natural projection:
\begin{align} \label{Eqn crystal embedding}
\iota_{\lambda}: B(\lambda) \hookrightarrow B(\infty) \otimes
T_{\lambda} \otimes C \longrightarrow  B(\infty).
\end{align}
Note that $\iota_\lambda$ is injective, but not a
crystal morphism.

\vskip 3em

\section{Path realization}

Let $U'_q(\g)$ be the subalgebra of $U_q(\g)$ generated by $e_i,
f_i, q^{\pm h_i}\ (i\in I)$ and we set $\overline{P}^{\vee} =
\bigoplus_{i=0}^n\Z h_i$, $\overline{\mathfrak{h}} = \C \otimes_\Z
\overline{P}^{\vee}$, $\overline{P} = \bigoplus_{i=0}^n\Z \Lambda_i$
and $\overline{P}^+ = \sum_{i=0}^n\ZZ \Lambda_i$.
Denote by $\cl:P \to \overline{P}$ the natural projection from $P$ to $\overline{P}$.
%and let $\mathcal{B}$ be a crystal associated with $U'_q(\g)$, which
%is called a {\it classical crystal}.
An abstract crystal $B$ associated with $U'_q(\g)$ is called a
{\it classical crystal}. For $b \in B$, we define
$$ \varepsilon(b) = \sum_{i=0}^n \varepsilon_i(b)\Lambda_i,
\qquad \varphi(b)=\sum_{i=0}^n \varphi_i(b)\Lambda_i. $$

\begin{Def} A {\it perfect crystal of level $\ell$} is a finite
classical crystal $B$ satisfying the following conditions:
\begin{enumerate}
\item[(a)] there exists a finite dimensional $U'_q(\g)$-module with a crystal basis whose crystal graph is isomorphic to $B$,
\item[(b)] $B \otimes B$ is connected,
\item[(c)] there exists a classical weight $\lambda_0 \in \overline{P}$ such that
$$ \wt(B) \subset \lambda_0 + \sum_{i \ne 0} \Z_{\le 0} \alpha_i, \qquad \#(B_{\lambda_0})=1,$$
\item[(d)] for any $b\in B$, we have $\langle c,\ \varepsilon(b) \rangle \ge \ell$,
\item[(e)] for each $\lambda \in \overline{P}_{\ell}^+:= \{ \mu \in \overline{P}^+|\ \langle c,\
\mu \rangle = \ell \}$, there exist unique vectors $b^{\lambda}$
and $b_{\lambda} $ in $B$ such that $\varepsilon(b^\lambda) =
\lambda,\ \varphi(b_\lambda) = \lambda$.
\end{enumerate}
\end{Def}

Given a dominant integral weight $\lambda$ with $\lambda(c)=\ell$
and a perfect crystal $B$ of level $\ell$, it was shown in
\cite{KKMMNN91} that there exists a unique crystal isomorphism,
called the {\it fundamental isomorphism theorem for perfect
crystals},
\begin{align} \label{Eqn fundamental thm of perfect crystals}
\psi: B(\lambda) \overset{\sim} \longrightarrow
B(\varepsilon(b_{\lambda})) \otimes B
\end{align}
sending $u_{\lambda}$ to
$u_{\varepsilon(b_{\lambda})} \otimes b_{\lambda}$. By applying this
crystal isomorphism repeatedly, we get a sequence of crystal
isomorphisms
$$B(\lambda) \overset{\sim} \longrightarrow B(\lambda_1) \otimes
B \overset{\sim} \longrightarrow B(\lambda_2) \otimes B \otimes B
\overset {\sim} \longrightarrow \cdots\cdots,$$ where
$\lambda_0=\lambda$, $b_0=b_{\lambda}$,
$\lambda_{k+1}=\varepsilon(b_k)$, $b_{k+1}=b_{\lambda_{k+1}}$
$(k\ge 0)$. The sequence $\mathbf{p}_{\lambda} :=
(b_k)_{k=0}^\infty$ is called the {\it ground-state path of weight
$\lambda$} and a sequence $\mathbf{p}=(p_k)_{k=0}^\infty$ of
elements $p_k \in B$ is called a {\it $\lambda$-path in $B$} if
$p_k = b_k$ for all $k \gg 0$. We denote by ${\mathcal
P}^{B}(\lambda)$ the set of $\lambda$-paths in $B$, which gives
rise to the {\it path realization} of $B(\lambda)$.

\begin{Thm}[\cite{KKMMNN91}] \label{Thm crystal iso of path realization}
There exists a unique crystal isomorphism $B(\lambda) \overset{\sim}
\longrightarrow \mathcal{P}^{ B}(\lambda)$ which maps
$u_\lambda$ to $\mathbf{p}_\lambda$.
\end{Thm}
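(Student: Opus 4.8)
The plan is to construct the crystal isomorphism $\Psi \colon B(\lambda) \overset{\sim}\longrightarrow \mathcal{P}^{B}(\lambda)$ by iterating the fundamental isomorphism theorem \eqref{Eqn fundamental thm of perfect crystals} infinitely many times and showing that the resulting limit lands inside the set of $\lambda$-paths. First I would apply the isomorphism $\psi$ repeatedly to obtain, for each $N \geq 0$, a crystal isomorphism
\begin{align}
\Psi_N \colon B(\lambda) \overset{\sim}\longrightarrow B(\lambda_N) \otimes B^{\otimes N}, \nonumber
\end{align}
where $\lambda_N = \varepsilon(b_{N-1})$ and $b_N = b_{\lambda_N}$ are exactly the data defining the ground-state path $\mathbf{p}_\lambda$. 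Composing with the inclusion that records only the tensor factors in $B^{\otimes N}$, each element $b \in B(\lambda)$ produces a sequence $(p_0, p_1, \dots, p_{N-1})$, and I would read off $\Psi(b) = (p_k)_{k=0}^{\infty}$ in the projective limit.

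The first key step is to verify that $\Psi$ is well defined, i.e.\ that the sequences stabilize compatibly as $N$ grows and that the tail of $\Psi(b)$ agrees with the ground-state path $\mathbf{p}_\lambda = (b_k)_{k=0}^{\infty}$. This is where the highest-weight element is the anchor: under each $\psi$, $u_{\lambda_N} \mapsto u_{\lambda_{N+1}} \otimes b_{N}$, so applying the isomorphism to the highest weight vector reproduces precisely the ground-state path, and for a general $b \in B(\lambda)$ only finitely many Kashiwara operators $\tilde f_i$ have been applied to reach $b$ from $u_\lambda$. Since each $\tilde f_i$ affects only one tensor factor under the tensor product rule (d)--(e), the components $p_k$ differ from $b_k$ for only finitely many $k$, so $\Psi(b)$ is genuinely a $\lambda$-path. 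The compatibility of the truncations $\Psi_N$ follows because $\psi$ is an isomorphism of crystals and hence commutes with the embeddings $B(\lambda_{N}) \otimes B^{\otimes N} \hookrightarrow B(\lambda_{N+1}) \otimes B^{\otimes (N+1)}$.

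The second step is to check that $\Psi$ is a crystal morphism, that it is strict, and that it is a bijection. Strictness and the intertwining of $\tilde e_i, \tilde f_i, \wt, \varepsilon_i, \varphi_i$ are inherited directly from each $\psi$, since $\psi$ is a crystal isomorphism and the tensor product operations are determined locally; one must only confirm that the limit of the $\varepsilon_i$ and $\varphi_i$ values is governed by a single tensor factor, which holds because $\langle c, \varepsilon(b)\rangle \geq \ell$ for all $b \in B$ (perfectness condition (d)) forces the relevant maxima in the tensor product rules (b)--(c) to be achieved in a bounded range of indices. Injectivity of $\Psi$ is clear since each $\Psi_N$ is injective, and surjectivity follows because any $\lambda$-path agrees with $\mathbf{p}_\lambda$ from some index onward, so it lies in the image of a suitable $\Psi_N$.

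The main obstacle I anticipate is the careful passage to the infinite tensor product: making precise the sense in which $B(\lambda_N) \otimes B^{\otimes N}$ converges and verifying that the crystal structure maps (especially $\varepsilon_i$ and $\varphi_i$, which are defined by a supremum over the path) are well defined and finite on $\mathcal{P}^{B}(\lambda)$. The level-$\ell$ perfectness conditions (d) and (e) are exactly what guarantee that the ground-state path is uniquely determined and that the action of any $\tilde e_i$ or $\tilde f_i$ on a path is eventually trivial on the far tail, so these must be invoked to control the limit; uniqueness of $\Psi$ then follows from the fact that a crystal morphism out of $B(\lambda)$ is determined by its value on the highest weight element $u_\lambda$.
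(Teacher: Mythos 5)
The paper offers no proof of this theorem: it is quoted directly from \cite{KKMMNN91}, so there is no internal argument to compare yours against. Your sketch reconstructs the standard proof from that reference --- iterate the fundamental isomorphism to get $B(\lambda)\cong B(\lambda_N)\otimes B^{\otimes N}$, read off the last $N$ tensor factors, and pass to the limit --- and it is essentially sound. The one place where your wording is looser than the actual argument is the stabilization step: the reason a single $\tilde f_i$ cannot disturb the far tail is not merely that it ``affects only one tensor factor,'' but that the ground-state path satisfies $\varphi(b_{k+1})=\lambda_{k+1}=\varepsilon(b_k)$ by construction, so in the signature rule the $+$'s and $-$'s contributed by consecutive tail entries cancel exactly and the choice of acting factor is decided within a finite truncation; you do invoke the perfectness conditions for this at the end, so the idea is present, but it is this exact cancellation (rather than condition (d) alone) that makes $\varepsilon_i$, $\varphi_i$ finite and the Kashiwara operators well defined on $\mathcal{P}^B(\lambda)$. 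With that point made precise, your outline matches the proof in \cite{KKMMNN91}.
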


We list some examples of perfect crystals of level $1$ and the
corresponding ground-state paths (see \cite{BFKL06, KKMMNN92},
etc).
\begin{Exa} \
\begin{enumerate}
\item The crystal $B^1$ and its ground-state $\mathbf{p}^1_{\Lambda_i}$ of weight $\Lambda_i$ ($i\in I$) are given as
\vskip 1em
\qquad\qquad\qquad\quad
\xymatrix{\mathbf{b}_1 \ar[r]^1 & \mathbf{b}_2 \ar[r]^2 & \cdots \ar[r]^{n-1} & \mathbf{b}_{n} \ar[r]^{n} & \mathbf{b}_{n+1} \ar@/^1.5pc/[llll]_0},
\vskip 0.5em
$$ \mathbf{p}^1_{\Lambda_i} = ( \cdots, \mathbf{b}_1, \mathbf{b}_2, \cdots, \mathbf{b}_n, \mathbf{b}_{n+1}, \mathbf{b}_1, \mathbf{b}_2, \cdots , \mathbf{b}_{i-1}, \mathbf{b}_{i}). $$
\vskip 0.5em

We denote by $\mathcal{P}^1(\Lambda_i)$ the set of all $\Lambda_i$-paths in $B^1$.

\item The crystal $B^n$ and its ground-state $\mathbf{p}^n_{\Lambda_i}$ of weight $\Lambda_i$ ($i\in I$) are given as
\vskip 1em
\qquad\qquad\qquad\quad
\xymatrix{ \overline {\mathbf{b}}_{n+1} \ar[r]^n & \overline{\mathbf{b}}_n \ar[r]^{n-1} & \cdots \ar[r]^{2} & \overline{\mathbf{b}}_{2} \ar[r]^{1} & \overline{\mathbf{b}}_{1} \ar@/^1.5pc/[llll]_0},
\vskip 0.5em
$$ \mathbf{p}^n_{\Lambda_i} = ( \cdots, \overline{\mathbf{b}}_{n+1}, \overline{\mathbf{b}}_n, \cdots, \overline{\mathbf{b}}_2, \overline{\mathbf{b}}_{1}, \overline{\mathbf{b}}_{n+1}, \overline{\mathbf{b}}_n, \cdots , \overline{\mathbf{b}}_{i+2}, \overline{\mathbf{b}}_{i+1}). $$
\vskip 0.5em
%Note that each $\overline{\mathbf{b}}_{i}$ corresponds to the semistandard tableau of $n\times 1$ rectangle shape with $\{1,2, \ldots, n+1 \} \setminus \{ i \}$.
We denote by $\mathcal{P}^n(\Lambda_i)$ the set of all $\Lambda_i$-paths in $B^n$.

\item The {\it adjoint crystal} $\adjoint$ is given as follows.

Let
$$\adjoint = \{\emptyset \} \cup \{ { \mathsf{b}_{\pm \alpha_{ij}}} |\ 1 \le i \le j \le n \} \cup \{ h_i|\ i=1,\ldots,n \},$$
where $\alpha_{ij} :=  \alpha_i + \alpha_{i+1} + \cdots+\alpha_j$
for $1 \le i \le j \le n$. We define the $i$-arrows $(i \in I)$ by
\begin{align}
(i \ne 0)\quad & \mathsf{b}_\alpha \buildrel i \over\longrightarrow \mathsf{b}_\beta\ \Leftrightarrow\ \alpha - \alpha_i = \beta, \nonumber \\
&  \mathsf{b}_{\alpha_i} \buildrel i \over\longrightarrow h_i \buildrel i \over\longrightarrow \mathsf{b}_{- \alpha_i}, \nonumber \\
(i=0)\quad & \mathsf{b}_\alpha \buildrel 0 \over\longrightarrow \mathsf{b}_\beta\ \Leftrightarrow\ \alpha + \theta = \beta\ (\alpha, \beta \ne \pm \theta), \nonumber \\
& \mathsf{b}_{- \theta} \buildrel 0 \over\longrightarrow \emptyset \buildrel 0 \over\longrightarrow \mathsf{b}_{\theta}, \nonumber
\end{align}
where $\theta = \alpha_1 + \cdots + \alpha_n$. The crystal $\adjoint$ is a perfect crystal of level $1$ with the ground-state path of weight $\Lambda_0$
$$ \mathbf{p}^{\rm ad}_{\Lambda_0} = (\cdots, \emptyset, \emptyset, \cdots ,\emptyset, \emptyset). $$
There is a crystal isomorphism $\mathfrak{p}^{\rm ad}: B^n \otimes B^1 \to \adjoint $ given by
\begin{align} \label{Eqn isomorphism of perfect crystals}
 \mathfrak{p}^{\rm ad}(  \overline{\mathbf{b}}_j \otimes  \mathbf{b}_i ) =
        \left\{
        \begin{array}{ll}
          \mathsf{b}_{\wt( \overline{\mathbf{b}}_j \otimes  \mathbf{b}_i )} & \hbox{ if }  \wt(\overline{\mathbf{b}}_j \otimes  \mathbf{b}_i)  \ne 0, \\
          h_{i} & \hbox{ if } \wt(\overline{\mathbf{b}}_j \otimes  \mathbf{b}_i) = 0, i \ne n+1, \\
          \emptyset & \hbox{ otherwise }.
        \end{array}
      \right.
\end{align}
We denote by $\mathcal{P}^{\rm ad}(\Lambda_0)$ the set of all $\Lambda_0$-paths in $\adjoint$.

\end{enumerate}
\end{Exa}

\vskip 3em

\section{Combinatorics of Young Walls}

In \cite{HK02, KANG00}, Kang gave a combinatorial realization of
crystal graphs for basic representations of quantum affine
algebras of type $A_n^{(1)}\ (n \ge 1),\ A_{2n-1}^{(2)}\ (n \ge 3),\
D_n^{(1)}\ (n \ge 4),\ A_{2n}^{(2)},\ D_{n+1}^{(2)}\ (n \ge2),$ and
$B_{n}^{(1)}\ (n \ge 3)$ by using new combinatorial objects called {\it Young walls}, which are a generalization of colored Young
diagrams used in \cite{DJKMO89, JMMO91, MM90}. In this work, we
focus on the quantum affine algebra $U_q(A_n^{(1)})$.

The {\it Young wall} $Y^1$ (resp. $Y^n$) is a wall consisting of colored blocks stacked by the following rules:
\begin{enumerate}
\item[(a)] the colored blocks should be stacked in the pattern $\mathsf{P}^1$ (resp. $\mathsf{P}^n$) of weight $\Lambda_k$ given below,
\item[(b)] except for the right-most column, there should be no free space to the right of any block.
\end{enumerate}
The patterns are given as follows.
\vskip 2em
\hskip 3em
\begin{texdraw}
\drawdim em \setunitscale 0.15  \linewd 0.4
\move(0 0)  \lvec(-3 0)\lvec(100 0)         % horizontal lines
\move(0 -12) \lvec(-3 -12)\lvec(100 -12)
\move(0 -24) \lvec(-3 -24)\lvec(100 -24)
\move(0 -36) \lvec(-3 -36)\lvec(100 -36)
\move(0 -58) \lvec(-3 -58)\lvec(100 -58)
\move(0 -70) \lvec(-3 -70)\lvec(100 -70)
\move(0 -82) \lvec(-3 -82)\lvec(100 -82)
\move(100 -82) \lvec(100 -82)\lvec(100 3)   %vertical lines
\move(88 -82) \lvec(88 -82)\lvec(88 3)
\move(76 -82) \lvec(76 -82)\lvec(76 3)
\move(54 -82) \lvec(54 -82)\lvec(54 3)
\move(42 -82) \lvec(42 -82)\lvec(42 3)
\move(30 -82) \lvec(30 -82)\lvec(30 3)
\move(18 -82) \lvec(18 -82)\lvec(18 3)
\move(6 -82) \lvec(6 -82)\lvec(6 3)
\htext(93.5 -78){\scriptsize$k$} \htext(74.5 -78){ \scriptsize{${k-1}$}} \htext(59.5 -78){ \scriptsize $\cdots$} \htext(45 -78){ \scriptsize$2$}\htext(34.5 -78){\scriptsize$1$} \htext(22.5 -78){\scriptsize$0$} \htext(10.5 -78.0){\scriptsize$n$}
\htext(88.5 -66){\scriptsize$k+1$} \htext(78.5 -66){ \scriptsize{$k$}} \htext(59.5 -66){ \scriptsize $\cdots$} \htext(45 -66){ \scriptsize$3$}\htext(34.5 -66){\scriptsize$2$} \htext(22.5 -66){\scriptsize$1$} \htext(10.5 -66.0){\scriptsize$0$}
\htext(91.5 -51){ \scriptsize$\vdots$} \htext(79.5 -51){ \scriptsize$\vdots$}
\htext(90 -31){ \scriptsize$n$} \htext(74 -31){ \scriptsize$n-1$}
\htext(90 -19){ \scriptsize$0$} \htext(79 -19){ \scriptsize$n$}
\htext(90 -7){ \scriptsize$1$} \htext(79 -7){ \scriptsize$0$}
\htext(10 -95){ {the pattern $\mathsf{P}^1$ of weight $\Lambda_k$}}

\move(140 0)  \lvec(137 0)\lvec(240 0)         % horizontal lines
\move(140 -12) \lvec(137 -12)\lvec(240 -12)
\move(140 -24) \lvec(137 -24)\lvec(240 -24)
\move(140 -36) \lvec(137 -36)\lvec(240 -36)
\move(140 -58) \lvec(137 -58)\lvec(240 -58)
\move(140 -70) \lvec(137 -70)\lvec(240 -70)
\move(140 -82) \lvec(137 -82)\lvec(240 -82)
\move(240 -82) \lvec(240 -82)\lvec(240 3)   %vertical lines
\move(228 -82) \lvec(228 -82)\lvec(228 3)
\move(216 -82) \lvec(216 -82)\lvec(216 3)
\move(194 -82) \lvec(194 -82)\lvec(194 3)
\move(182 -82) \lvec(182 -82)\lvec(182 3)
\move(170 -82) \lvec(170 -82)\lvec(170 3)
\move(158 -82) \lvec(158 -82)\lvec(158 3)
\move(146 -82) \lvec(146 -82)\lvec(146 3)
\htext(233.5 -78){\scriptsize$k$} \htext(214.5 -78){ \scriptsize{${k+1}$}} \htext(199.5 -78){ \scriptsize $\cdots$} \htext(185 -78){ \scriptsize$n$}\htext(174.5 -78){\scriptsize$0$} \htext(162.5 -78){\scriptsize$1$} \htext(150.5 -78.0){\scriptsize$2$}
\htext(228.5 -66){\scriptsize$k-1$} \htext(218.5 -66){ \scriptsize{$k$}} \htext(199.5 -66){ \scriptsize $\cdots$} \htext(180.0 -66){ \scriptsize$n-1$}\htext(174.5 -66){\scriptsize$n$} \htext(162.5 -66){\scriptsize$0$} \htext(150.5 -66.0){\scriptsize$1$}
\htext(231.5 -51){ \scriptsize$\vdots$} \htext(219.5 -51){ \scriptsize$\vdots$}
\htext(230 -31){ \scriptsize$1$} \htext(219 -31){ \scriptsize$2$}
\htext(230 -19){ \scriptsize$0$} \htext(219 -19){ \scriptsize$1$}
\htext(230 -7){ \scriptsize$n$} \htext(219 -7){ \scriptsize$0$}
\htext(150 -95){ {the pattern $\mathsf{P}^n$ of weight $\Lambda_k$}}
\end{texdraw}
\vskip 1em
Note that the heights of the columns of a Young wall $Y$ are weakly decreasing from right to left, so we denote it by
$Y = (y_i)_{i \ge 0}$, where $y_i$ is the $i$-th column of $Y$.
\begin{Def} Let $Y$ be a Young wall corresponding to the pattern $\mathsf{P}^1$ (resp. $\mathsf{P}^n$).
\begin{enumerate}
\item An $i$-block in $Y$ is called a {\it removable $i$-block} if $Y$ remains a Young wall after removing the block.
\item A place in $Y$ is called an {\it admissible $i$-slot} if one may add an $i$-block to obtain another Young wall.
\item A column in $Y$ is said to be {\it $i$-removable} (resp. {\it $i$-admissible}) if the column has a removable $i$-block (resp. an admissible $i$-slot).
\end{enumerate}
\end{Def}
Now we define the action of Kashiwara operators $\tilde e_i, \tilde f_i\ (i\in I)$ on Young walls. Let $Y = (y_k)_{k\ge0}$ be a Young wall corresponding to the pattern $\mathsf{P}^1$ (resp. $\mathsf{P}^n$).
\begin{enumerate}
\item[(a)] To each column $y_k$ of $Y$, we assign
$$
\left\{
  \begin{array}{ll}
    - & \hbox{if $y_k$ is $i$-removable,} \\
    + & \hbox{if $y_k$ is $i$-admissible,}
  \end{array}
\right.
$$
\item[(b)] From this sequence of $+$'s and $-$'s, cancel out all $(+,-)$ pairs to obtain a finite sequence of $-$'s followed by $+$'s. This sequence $(-,\cdots,-,+,\cdots,+)$ is called the {\it $i$-signature} of $Y$.
\item[(c)] We define $\tilde e_i Y$ to be the Young wall obtained from $Y$ by removing the $i$-block corresponding to the rightmost $-$ in the $i$-signature of $Y$. If there is no $-$ in the $i$-signature, we set $\tilde e_i Y = 0 $.
\item[(d)] We define $\tilde f_i Y$ to be the Young wall obtained from $Y$ by adding an $i$-block to the column corresponding to the leftmost $+$ in the $i$-signature of $Y$. If there is no $+$ in the $i$-signature, we set $\tilde f_i Y = 0 $.
\end{enumerate}
We also define
\begin{align*}
\wt(y_j) &= \sum_{i\in I} k_{ij} \alpha_i \ (j \in \Z_{\ge 0}), \\
\wt(Y) &= \Lambda_k - \sum_{j \ge 0} \wt(y_j), \\
\varepsilon_i(Y) &= \text{ the number of $-$'s in the $i$-signature of $Y$}, \\
\varphi_i(Y) &= \text{ the number of $+$'s in the $i$-signature of $Y$},
\end{align*}
where $k_{ij}$ is the number of $i$-blocks in the $j$-th column $y_j$ of $Y$. Note that the height of $y_j$ is $\Ht(\wt(y_j))$.
Let $\mathcal{Y}^1(\Lambda_k)$ (resp.
$\mathcal{Y}^n(\Lambda_k)$) be the set of all Young walls
$Y^1$ (resp. $Y^n$) whose shapes are $(n+1)$-reduced Young
diagrams; i.e., $Y^1 = (y_j)_{j\ge0} \in \mathcal{Y}^1(\Lambda_k)$ (resp. $Y^n = (\overline{y}_j)_{j\ge0} \in \mathcal{Y}^n(\Lambda_k)$) if and only if
$$  \Ht(\wt(y_j)) - \Ht(\wt(y_{j+1}))  < n+1\quad (\text{resp. }  \Ht(\wt(\overline{y}_j)) - \Ht(\wt(\overline{y}_{j+1}))  < n+1 ) $$ for $j \ge 0$. Then $\mathcal{Y}^1(\Lambda_k)$ (resp.
$\mathcal{Y}^n(\Lambda_k)$) has a $U_q(\g)$-crystal structure, and
we have the following theorem.

\begin{Thm} \label{Thm isomorphism of paths and Young walls}
\cite{HK02, KANG00, MM90}
There is a unique crystal isomorphism
$B(\Lambda_k) \overset{\sim} \longrightarrow
\mathcal{Y}^1(\Lambda_k)$ {\rm (}resp.
$\mathcal{Y}^n(\Lambda_k)${\rm )} which maps the highest weight element $u_{\Lambda_k}$ to the
empty Young wall $\emptyset$.
\end{Thm}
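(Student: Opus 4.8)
The plan is to factor the desired isomorphism through the path realization of Theorem~\ref{Thm crystal iso of path realization}, using the perfect crystal $B^1$. Applying that theorem with $B = B^1$ and $\lambda = \Lambda_k$ yields a unique crystal isomorphism $B(\Lambda_k) \overset{\sim}\longrightarrow \mathcal{P}^1(\Lambda_k)$ sending $u_{\Lambda_k}$ to the ground-state path $\mathbf{p}^1_{\Lambda_k}$. It therefore suffices to produce a crystal isomorphism $\Phi: \mathcal{Y}^1(\Lambda_k) \overset{\sim}\longrightarrow \mathcal{P}^1(\Lambda_k)$ carrying the empty wall $\emptyset$ to $\mathbf{p}^1_{\Lambda_k}$; composing the inverse of $\Phi$ with the path isomorphism then gives the assertion, and the $\mathcal{Y}^n(\Lambda_k)$ case follows identically with $B^n$ in place of $B^1$.

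First I would construct $\Phi$ as a column-reading map. To a reduced Young wall $Y = (y_j)_{j \ge 0}$ I associate the sequence obtained by reading the columns from right to left, sending each column $y_j$ to the element of $B^1$ determined by the color and height of its topmost block (equivalently, by the residue of $\Ht(\wt(y_j))$ modulo $n+1$ together with $\wt(y_j)$). I would then check that $\Phi(Y)$ is genuinely a $\Lambda_k$-path: the $(n+1)$-reduced condition $\Ht(\wt(y_j)) - \Ht(\wt(y_{j+1})) < n+1$ is precisely what forces the associated sequence to stabilize to $\mathbf{p}^1_{\Lambda_k}$ for $j \gg 0$, and in particular $\Phi(\emptyset) = \mathbf{p}^1_{\Lambda_k}$. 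That $\Phi$ is a weight-preserving bijection is direct bookkeeping: the formula $\wt(Y) = \Lambda_k - \sum_{j\ge0}\wt(y_j)$ matches the tensor-product weight of the path, and $\Phi$ is inverted by stacking the column prescribed by each path entry.

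The heart of the proof, and the step I expect to be the main obstacle, is verifying that $\Phi$ intertwines the Kashiwara operators, i.e.\ $\Phi(\tilde e_i Y) = \tilde e_i \Phi(Y)$ and $\Phi(\tilde f_i Y) = \tilde f_i \Phi(Y)$ for all $i \in I$. One must reconcile two a priori different recipes for the crystal action: the signature rule on $Y$ (assign $-$ to each $i$-removable and $+$ to each $i$-admissible column, cancel $(+,-)$ pairs, then act on the extremal surviving sign) versus the iterated tensor-product rule (d)--(e) applied entrywise to $\Phi(Y)$. The key local lemma I would isolate is that, for a single column, adding (resp.\ removing) an $i$-block corresponds exactly to applying $\tilde f_i$ (resp.\ $\tilde e_i$) to the corresponding factor of $B^1$, and that a column is $i$-admissible (resp.\ $i$-removable) precisely when that factor admits $\tilde f_i$ (resp.\ $\tilde e_i$). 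Granting this, the global cancellation of $(+,-)$ pairs in the $i$-signature of $Y$ coincides with the cancellation governing the comparison of $\varphi_i$ and $\varepsilon_i$ across consecutive tensor factors, so the rightmost $-$ (resp.\ leftmost $+$) selects the same factor in both descriptions. The delicate cases are the $0$-arrows and the wrap-around of colors at the top of each pattern period, where the cyclic structure of $\mathsf{P}^1$ and the perfect-crystal $0$-action in $B^1$ must be checked to agree; these demand a careful case analysis at the boundary between consecutive pattern periods.

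Finally, once $\Phi$ is shown to be a crystal isomorphism, uniqueness is immediate. Since $B(\Lambda_k)$ is a connected highest weight crystal generated from $u_{\Lambda_k}$ by the operators $\tilde f_i$, and $\emptyset$ is the unique element of $\mathcal{Y}^1(\Lambda_k)$ of weight $\Lambda_k$, any crystal isomorphism must send $u_{\Lambda_k}$ to $\emptyset$; it is then determined on all of $B(\Lambda_k)$ by the defining compatibility of a crystal morphism with the $\tilde f_i$ (Definition~\ref{D:mor}).
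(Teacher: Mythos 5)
Your proposal is correct and follows the same route the paper takes: Theorem \ref{Thm isomorphism of paths and Young walls} is quoted from \cite{HK02, KANG00, MM90}, and the construction the paper sketches immediately after it --- the column-reading correspondence $\mathbf{Y}^1_k:\mathcal{P}^1(\Lambda_k)\to\mathcal{Y}^1(\Lambda_k)$ (resp.\ $\mathbf{Y}^n_k$) composed with the path realization of Theorem \ref{Thm crystal iso of path realization} --- is exactly the inverse of your $\Phi$. One small correction: stabilization of $\Phi(Y)$ to the ground-state path follows merely from $Y$ having finitely many blocks (empty columns read off ground-state entries), whereas the $(n+1)$-reduced condition is what makes $\Phi$ injective, since the path entry attached to a column depends only on its height modulo $n+1$.
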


Let $\mathbf{p}^1 = (\cdots, \mathbf{b}_{i_2}, \mathbf{b}_{i_1}, \mathbf{b}_{i_0})$ be a $\Lambda_k$-path in $ B^1$.
Consider a Young wall $\mathbf{Y}^1_k(\mathbf{p}^1) = (y_j(\mathbf{p}^1))_{j \ge 0}$ such that the $j$-th column $y_j(\mathbf{p}^1) $ is $\emptyset$ if $j > \Ht(\Lambda_k - \wt(\mathbf{p}^1))$, otherwise $y_j(\mathbf{p}^1)$ is the smallest $j$-th column in $\mathsf{P}^1$ satisfying the following conditions:
\begin{enumerate}
\item[(a)] the top color of $y_j(\mathbf{p}^1)$ is $i_{j}-1$,
\item[(b)] $ y_{j+1}(\mathbf{p}^1) \le y_{j}(\mathbf{p}^1)$.
\end{enumerate}
One can prove that the Young wall $\mathbf{Y}^1_k(\mathbf{p}^1)$ is contained in $ \mathcal{Y}^1(\Lambda_k)$, and the map
\begin{align} \label{Eqn crystal iso from paths P1 to Young walls Y1}
\mathbf{Y}^1_k: \mathcal{P}^1(\Lambda_k) \longrightarrow \mathcal{Y}^1(\Lambda_k)
\end{align}
is a crystal isomorphism. If we set $Y^1 = (y_j)_{j \ge 0} \in \mathcal{Y}^1(\Lambda_k)$, then the inverse image $\mathbf{p}^1$ of $Y^1$ under the crystal isomorphism $\mathbf{Y}^1_k$ is
\begin{align} \label{Eqn iso 1 YWs to paths}
\mathbf{p}^1 = ( \ldots, \mathbf{b}_{a_j}, \ldots, \mathbf{b}_{a_1}, \mathbf{b}_{a_0} ),
\end{align}
where $a_j \equiv \Ht(\wt(y_j)) - j + k \ ( \text{mod}\ n+1)$ for all $j \ge 0$.

In a similar manner, given a $\Lambda_k$-path $\mathbf{p}^n = (\cdots, \overline{\mathbf{b}}_{i_2}, \overline{\mathbf{b}}_{i_1}, \overline{\mathbf{b}}_{i_0})$ in $ B^n$,
we have a Young wall $\mathbf{Y}^n_k(\mathbf{p}^n) = (\overline{y}_j(\mathbf{p}^n))_{j \ge 0}$ such that the $j$-th column $\overline{y}_j(\mathbf{p}^n)$ is $\emptyset$ if $j > \Ht(\Lambda_k - \wt(\mathbf{p}^n))$, otherwise $\overline{y}_j(\mathbf{p}^n)$ is the smallest $j$-th column in $\mathsf{P}^n$ satisfying the following conditions:
\begin{enumerate}
\item[(a)] the top color of $\overline{y}_j(\mathbf{p}^n)$ is $i_{j}$,
\item[(b)] $\overline{y}_{j+1}(\mathbf{p}^n) \le \overline{y}_j(\mathbf{p}^n)$.
\end{enumerate}
One can prove that the Young wall $\mathbf{Y}^n_k(\mathbf{p}^n)$ is contained in $\mathcal{Y}^n(\Lambda_k)$, and the map
\begin{align} \label{Eqn crystal iso from paths Pn to Young walls Yn}
\mathbf{Y}^n_k:\mathcal{P}^n(\Lambda_k) \longrightarrow \mathcal{Y}^n(\Lambda_k)
\end{align}
is a crystal isomorphism. If we set $Y^n = (\overline{y}_j)_{j \ge 0} \in \mathcal{Y}^n(\Lambda_k)$, then the inverse image $\mathbf{p}^n$ of $Y^n$ under the crystal isomorphism $\mathbf{Y}^n_k$ is
\begin{align} \label{Eqn iso n YWs to paths}
\mathbf{p}^n =  (\ldots, \overline{\mathbf{b}}_{b_j}, \ldots, \overline{\mathbf{b}}_{b_1}, \overline{\mathbf{b}}_{b_0}  ),
\end{align}
where $b_j \equiv 1- \Ht(\wt(\overline{y}_j)) + j+k \ ( \text{mod}\ n+1)$ for all $j \ge 0$.

%\noindent {\it Remark}. Let $Y^1 = (y_i)_{i \ge 0} \in \mathcal{Y}^1(\Lambda_k)$. Then the inverse image %$\mathbf{p}_1$ of $Y^1$ under the crystal isomorphism $\mathbf{Y}^1_k$ is
%\begin{align} \label{Eqn iso 1 YWs to paths}
%\mathbf{p}_1 = ( \ldots, \mathbf{b}_{a_i}, \ldots, \mathbf{b}_{a_1}, \mathbf{b}_{a_0} ),
%\end{align}
%where $a_j \equiv \Ht(\wt(y_i)) - i + k \ ( \text{mod}\ n+1)$ for all $i \ge 0$.
%In the same manner, let $Y^n = (\overline{y}_i)_{i \ge 0} \in \mathcal{Y}^n(\Lambda_k)$. Then the inverse image %$\mathbf{p}_n$ of $Y^n$ under the crystal isomorphism $\mathbf{Y}^n_k$ is
%\begin{align} \label{Eqn iso n YWs to paths}
%\mathbf{p}_n =  (\ldots, \overline{\mathbf{b}}_{b_i}, \ldots, \overline{\mathbf{b}}_{b_1}, \overline{\mathbf{b}}_{b_0}  %),
%\end{align}
%where $b_i \equiv 1- \Ht(\wt(\overline{y}_i)) + i+k \ ( \text{mod}\ n+1)$ for all $i \ge 0$.

\vskip 2em

\begin{Exa} \label{Exa Young walls} Let $\mathfrak{g}$ be the Kac-Moody algebra of type $A^{(1)}_3$. Fix an element
$$ b = \tilde{f}_0\tilde{f}_2\tilde{f}_1(\tilde{f}_1\tilde{f}_2\tilde{f}_3\tilde{f}_0)^3 u_{\Lambda_0} \in B(\Lambda_0), $$
where $u_{\Lambda_0}$ is the highest weight element of
$B(\Lambda_0)$. Then the $\Lambda_0$-paths $\mathbf{p}^1 \in
\mathcal{P}^1(\Lambda_0)$, $\mathbf{p}^n \in
\mathcal{P}^n(\Lambda_0)$ and $\mathbf{p}^{\rm ad} \in
\mathcal{P}^{\rm ad}(\Lambda_0)$ corresponding to $b$ are given as
\begin{align*}
\mathbf{p}^1 & = ( \ldots, \mathbf{b}_3, \mathbf{b}_1,\mathbf{b}_2, \mathbf{b}_3,
\mathbf{b}_0, \mathbf{b}_1,\mathbf{b}_2, \mathbf{b}_3,\mathbf{b}_0, \mathbf{b}_1,\mathbf{b}_2, \mathbf{b}_3,
\mathbf{b}_0, \mathbf{b}_3), \\
\mathbf{p}^n & = (\ldots, \overline{\mathbf{b}}_3, \overline{\mathbf{b}}_2, \overline{\mathbf{b}}_0, \overline{\mathbf{b}}_1, \overline{\mathbf{b}}_0, \overline{\mathbf{b}}_2, \overline{\mathbf{b}}_1 ), \\
\mathbf{p}^{\rm ad} &= (\ldots, \emptyset, \emptyset, \mathsf{b}_{\alpha_1 + \alpha_2+\alpha_3}, h_1, h_1,
\mathsf{b}_{-\alpha_1 - \alpha_2}),
\end{align*}
which yield the Young walls $Y^1 := \mathbf{Y}^1_0(\mathbf{p}^1)$ and $Y^n := \mathbf{Y}^n_0(\mathbf{p}^n)$ as follows:

{
\centering
$Y^1$ =
\begin{xy} /r1em/:
,(16.8,1.4),{*{2}\xypolygon4{}}
,(16.8,0.0),{*{1}\xypolygon4{}}
,(0,-1.4),{*{0}\xypolygon4{}},(1.4,-1.4),{*{1}\xypolygon4{}},(2.8,-1.4),{*{2}\xypolygon4{}},(4.2,-1.4),{*{3}\xypolygon4{}},(5.6,-1.4),{*{0}\xypolygon4{}}
,(7,-1.4),{*{1}\xypolygon4{}},(8.4,-1.4),{*{2}\xypolygon4{}},(9.8,-1.4),{*{3}\xypolygon4{}},(11.2,-1.4),{*{0}\xypolygon4{}},(12.6,-1.4),{*{1}\xypolygon4{}}
,(14,-1.4),{*{2}\xypolygon4{}},(15.4,-1.4),{*{3}\xypolygon4{}},(16.8,-1.4),{*{0}\xypolygon4{}}
\end{xy}
 , \qquad
$Y^n$ =
\begin{xy} /r1em/:
,(4.2,2.5),{*{2}\xypolygon4{}},(5.6,2.5),{*{1}\xypolygon4{}}
,(1.4,1.1),{*{1}\xypolygon4{}},(2.8,1.1),{*{0}\xypolygon4{}},(4.2,1.1),{*{3}\xypolygon4{}},(5.6,1.1),{*{2}\xypolygon4{}}
,(1.4,-0.3),{*{2}\xypolygon4{}},(2.8,-0.3),{*{1}\xypolygon4{}},(4.2,-0.3),{*{0}\xypolygon4{}},(5.6,-0.3),{*{3}\xypolygon4{}}
,(0,-1.7),{*{0}\xypolygon4{}},(1.4,-1.7),{*{3}\xypolygon4{}},(2.8,-1.7),{*{2}\xypolygon4{}},(4.2,-1.7),{*{1}\xypolygon4{}},(5.6,-1.7),{*{0}\xypolygon4{}}
\end{xy} .
\vskip 1em
}

\end{Exa}

\vskip 3em

\section{Geometric Constructions of Crystal Graphs}

In this section, we review geometric constructions of crystal
bases via quiver varieties. See \cite{ FS03, KS97, L91, N98, St02,
Sv06} for more details.

Let $I = \Z / (n+1)\Z$ and $H$ the set of the arrows such that $i \rightarrow j$ with
$i,j \in I,\ i-j= \pm 1\ $. For $h \in H$, we denote by
$\qin(h)$ (resp. $\qout(h)$) the incoming (resp. outgoing) vertex
of $h$. Define an involution $-:H \to H$ to be the map
interchanging $i\to j$ and $j\to i$. Let $$\Omega = \{ h \in H| \
\qin(h) - \qout(h)  =1   \}$$ so that $H = \Omega \sqcup
\overline{\Omega}$;
i.e., \vskip 2em \hskip 4em
\begin{texdraw}
\drawdim em \setunitscale 0.15  \linewd 0.4
\arrowheadtype t:F

\move(0 0) \htext(-12 -8){$(I,\Omega) = $}
\htext(40 0){$\bullet$} \htext(40.5 4){\scriptsize$0$}
\htext(0 -20){ $\bullet$} \htext(2.5 -24){\scriptsize$1$}
\htext(25 -20){$\bullet$} \htext(25 -24){\scriptsize$2$}
\htext(39.5 -20){$\cdots$}
\htext(55 -20){$\bullet$} \htext(51 -24){\scriptsize$n-1$}
\htext(80 -20){$\bullet$} \htext(81 -24){\scriptsize$n$}
\htext(88 -20){,}

\move(40 0) \arrowheadsize l:2 w:2 \avec(6 -17)
\move(6 -19) \arrowheadsize l:2 w:2 \avec(24 -19)
\move(61 -19) \arrowheadsize l:2 w:2 \avec(78 -19)
\move(80 -17) \arrowheadsize l:2 w:2 \avec(44 0)

\move(120 0) \htext(108 -8){$(I,\overline \Omega) = $}
\htext(160 0){$\bullet$} \htext(160.5 4){\scriptsize$0$}
\htext(120 -20){ $\bullet$} \htext(122.5 -24){\scriptsize$1$}
\htext(145 -20){$\bullet$} \htext(145 -24){\scriptsize$2$}
\htext(159.5 -20){$\cdots$}
\htext(175 -20){$\bullet$} \htext(171 -24){\scriptsize$n-1$}
\htext(200 -20){$\bullet$} \htext(201 -24){\scriptsize$n$}
\htext(208 -20){.}

\move(126 -17) \arrowheadsize l:2 w:2 \avec(160 0)
\move(144 -19) \arrowheadsize l:2 w:2 \avec(126 -19)
\move(198 -19) \arrowheadsize l:2 w:2 \avec(181 -19)
\move(164 0) \arrowheadsize l:2 w:2 \avec(200 -17)

\end{texdraw}
\vskip 1em

Note that our graph is an affine Dynkin graph of type $A_n^{(1)}$. We take the map $\epsilon:H \to \{-1,1\}$ given by
$$ \epsilon(h) = \left\{
                   \begin{array}{ll}
                     1 & \hbox{ if } h \in \Omega, \\
                     -1 & \hbox{ if } h \in \overline{\Omega}.
                   \end{array}
                 \right.
 $$

For $\alpha = \sum_{i=0}^n k_i \alpha_i \in Q^+$, we define the $I$-graded vector space
$$V(\alpha) = \bigoplus_{i=0}^n V_i(\alpha), $$
where $V_i(\alpha)$ is the $\C$-vector space with an ordered basis $v^i(\alpha) = \{ v^i_0,v^i_1 \ldots, v^i_{k_i-1} \}$ for all $i$.
Fix an ordered basis
$$v(\alpha)=\{ v^0_0, \ldots, v^0_{k_0-1}, v^1_{0}, \ldots, v^1_{k_1-1}, \ldots, v^n_0, \ldots  v^n_{k_n-1} \},$$
for $V(\alpha)$ and set
$$  \underline{\dim} V(\alpha) = \sum_{i=0}^n k_i \alpha_i = \alpha. $$
In a similar manner, for $\lambda = \sum_{i=0}^n w_i \Lambda_i \in P^+ $, we define the $I$-graded vector space
$$ W(\lambda) = \bigoplus_{i=0}^n W_i(\lambda),$$
where $W_i(\lambda)$ is a $\C$-vector space of dimension $w_i$.

Given $\alpha \in Q^+$, we set $V = V(\alpha)$ (resp. $V_i =
V_i(\alpha)\ (i=I)$) and let
$$ E(\alpha) = E_\Omega (\alpha ) \oplus E_{\overline{\Omega}}(\alpha),$$
where
\begin{align}
E_{\Omega}(\alpha ) &= \bigoplus_{h\in \Omega}\Hom(V_{\qout(h)}, V_{\qin(h)}) = \bigoplus_{i \in I}\Hom(V_{i-1},\ V_{i}), \nonumber \\
E_{\overline{\Omega}}(\alpha) &= \bigoplus_{h\in \overline{\Omega}}\Hom(V_{\qout(h)}, V_{\qin(h)}) = \bigoplus_{i \in I}\Hom(V_{i},\ V_{i-1}).   \nonumber
\end{align}
Let us denote by $\pi_{\Omega}$ (resp. $\pi_{\overline{\Omega}}$) the natural projection from $E(\alpha)$ to $E_{\Omega}(\alpha)$ (resp. $E_{\overline{\Omega}}(\alpha)$). For $\chi \in E(\alpha)$, if there is no danger of confusion, we write $x=(x_i\in \Hom(V_{i-1},\ V_{i}))_{i\in I}$ (resp. $\overline x=( \overline{x}_i \in \Hom(V_{i},\ V_{i-1}))_{i\in I}$) for $\pi_{\Omega}(\chi)$ (resp. $\pi_{\overline \Omega}(\chi)$). The matrix representation of $x\in E_{\Omega}(\alpha)$ in the ordered basis $v(\alpha)$ is given as
$$ x = \left(
         \begin{array}{ccccc}
           0   &  &   \cdots  &       0  & x_0 \\
           x_1  & 0   &  &         &  0 \\
           0   &   x_2  & 0   &   & \vdots \\
           \vdots    &     &  \ddots   & \ddots  & 0 \\
           0 &  \cdots   &  0   &     x_n    & 0 \\
         \end{array}
       \right),
 $$
where $x_i$ is the matrix representation of $x|_{V_{i-1}}: V_{i-1} \to V_i$ in the ordered bases $v^{i-1}(\alpha)$ and $v^{i}(\alpha)$. We also may consider the matrix representation of $\overline{x}\in E_{\overline{\Omega}}(\alpha ) $ in the same manner:
$$ \overline{x} = \left(
         \begin{array}{ccccc}
           0   & \overline{x}_1 & 0    &     \cdots  & 0 \\
            & 0   & \overline{x}_2 &         &   \vdots \\
            \vdots   &     & 0   &  \ddots & 0 \\
             0  &     &    & \ddots  & \overline{x}_n  \\
           \overline{x}_0 &  0   &  \cdots   &     0    & 0 \\
         \end{array}
       \right),
 $$
where $\overline{x}_i$ is the matrix representation of $\overline{x}|_{V_i}: V_i \to V_{i-1}$ in the ordered bases $v^i(\alpha)$ and $v^{i-1}(\alpha)$.

For $s \in I,\ 0\le i < k_{s-1},\ 0\le j < k_{s}$, define the linear map $\mathcal{E}^s_{ij}:V_{s-1} \to V_{s}$ by
$$
 \mathcal{E}^{s}_{ij}(v^{s-1}_k) = \left\{
                       \begin{array}{ll}
                         v^{s}_j & \hbox{ if } k=i, \\
                         0 & \hbox{ otherwise.}
                       \end{array}
                   \right.
$$ Similarly, for $s \in I,\ 0\le i < k_{s},\ 0\le j < k_{s-1}$, define the linear map $\overline{\mathcal{E}}^s_{ij}:V_{s} \to V_{s-1}$ by
$$
 \overline{\mathcal{E}}^{s}_{ij}(v^{s}_k) = \left\{
                       \begin{array}{ll}
                         v^{s-1}_j & \hbox{ if } k=i, \\
                         0 & \hbox{ otherwise.}
                       \end{array}
                   \right.
$$

The algebraic group $G(\alpha):= \prod_{i \in I} \Aut(V_i) \subset \Aut(V)$ acts on $E(\alpha)$ by
$(g,\chi)  = g \chi g^{-1}$ for $g \in G(\alpha),\ \chi \in E(\alpha)$.
Let $\langle \cdot,\cdot \rangle$ be the nondegenerate, $G(\alpha)$-invariant, sympletic form on $E(\alpha)$ defined by $$ \langle \chi,\chi' \rangle = \sum_{h\in H} \epsilon(h) \tr(\chi_h {\chi'}_{\overline h})$$
for $\chi,\chi' \in E(\alpha)$. Note that $E(\alpha)$ may be viewed as the cotangent bundle of $E_{\Omega}(\alpha)$ (resp. $E_{\overline{\Omega}}(\alpha)$) under this form. The {\it moment map} $\mu=(\mu_i: E(\alpha) \to \End(V_i))_{i\in I}$ is given by
$$ \mu_i(\chi) = \sum_{h\in H,\ \qin(h)=i} \epsilon(h)\chi_h \chi_{\overline h}\
=\ x_{i} \overline{x}_{i} - \ \overline{x}_{i+1} x_{i+1}   $$
for $\chi = x + \overline x \in E(\alpha)$. Note that
\begin{align}\label{Eqn commuting property of chi}
\mu_i(\chi)=0\ \text{for all } i \in I\quad \text{ if and only if } \quad [x, \overline x] = x \overline x - \overline x  x = 0.
\end{align}
Here, $x \overline{x} = (x_{i} \overline{x}_{i}: V_i \rightarrow
V_i)_{i\in I}$ and $\overline{x} x = (\overline{x}_{i+1} x_{i+1} :
V_i \rightarrow V_i)_{i\in I}$.

An element $\chi \in E(\alpha)$ is {\it nilpotent} if there exists
an $N \ge 2$ such that for any sequence $h_1, \ldots, h_N \in H$
satisfying $\qin(h_i)=\qout(h_{i+1})\ (i=1,\ldots,N-1)$, the
composition map $\chi_{h_N} \cdots \chi_{h_1}$ is zero. We define
{\it Lusztig's quiver variety} to be
$$\Lambda(\alpha) = \{\chi \in E(\alpha)|\ \chi: \text{nilpotent},\  \mu_i(\chi)=0 \text{ for all }i\in I \}.$$
We denote by $\Irr \Lambda(\alpha)$ the set of all irreducible components of $\Lambda(\alpha)$

For a pair $(k' \le k)$ of integers, let $V(k', k) = \bigoplus_{i\in I} V_i(k', k)$ be the $I$-graded vector space with basis $\{ \mathsf{e}_j|\ k' \le j \le k \}$ such that $V_i(k', k) = \Span_\C\{ \mathsf{e}_j|\ j \equiv i \ {\rm (mod }\ n+1 {\rm )}  \}$ for $i\in I$. Consider the $\C$-linear map $x(k',k):V(k', k) \to V(k', k)$ sending $\mathsf{e}_i$ to $\mathsf{e}_{i+1}$, where $\mathsf{e}_{k+1}=0$. Then it is clear that the representation $(V(k', k), x(k', k))$ of the quiver $(I,\Omega)$ is indecomposable and nilpotent. Note that the isomorphism class of $(V(k', k), x(k', k))$ does not change when $k'$ and $ k$ are simultaneously translated by a multiple $n+1$. Moreover, any indecomposable nilpotent finite-dimensional representation of the quiver $(I,\Omega)$ is isomorphic to $(V(k', k), x(k', k))$ for some pair $(k' \le k) $. Let $\mathbf{Z}$ be the set of all pairs $(k' \le k)$ of integers defined up to simultaneous translation by a multiple of $n+1$ and let $\tilde{ \mathbf{Z}}$ be the set of all functions from $\mathbf{Z}$ to $\Z_{\ge 0}$ with finite support. Note that $\tilde{ \mathbf{Z}}$ naturally corresponds to isomorphism classes of nilpotent finite-dimensional representations of the quiver $(I,\Omega)$. The set of $G(\alpha)$-orbits on the set of nilpotent elements in $E_{\Omega}(\alpha)$ is naturally indexed by the subset $\tilde{\mathbf{Z}}(\alpha)$ of $\tilde{\mathbf{Z}}$ such that, for $f \in \tilde{\mathbf{Z}}(\alpha)$,
$$ \sum_{k' \le k} f(k', k) \cdot \# \{ j|\ k' \le j \le k,\ j\equiv i\ ( \text{mod}\ n+1) \} = \dim V_i\ (i\in I).$$
Here the sum is taken over all $k' \le k$ up to simultaneous translation by a multiple of $n+1$.
An element $f \in \tilde{\mathbf{Z}}(\alpha)$ is {\it aperiodic} if, for any $k' \le  k$, not all integers
$f(k',k),f(k'+1,k+1), \ldots, f(k'+n, k+n) $ are greater than zero. For any $f \in  \tilde{\mathbf{Z}}(\alpha)$, let $\mathcal{C}_f$ be the conormal bundle of the $G(\alpha)$-orbit corresponding to $f$, and let $\overline{\mathcal{C}}_f$ be the closure of $\mathcal{C}_f$. Then we have

\begin{Thm}[\cite{L91}] \label{Thm Lusztig conormal bundle}
The map $f \mapsto \overline{\mathcal{C}}_f$ is a 1-1 correspondence between the set of aperiodic elements in $\tilde{\mathbf{Z}}(\alpha)$ and $ \Irr \Lambda(\alpha)$.
\end{Thm}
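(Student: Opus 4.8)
The plan is to realize $\Lambda(\alpha)$ as a union of the nilpotent parts of the conormal bundles to the finitely many $G(\alpha)$-orbits on the nilpotent locus of $E_\Omega(\alpha)$, and then to read off its irreducible components by a dimension count, the aperiodicity condition being exactly the criterion for such a conormal bundle to be top-dimensional inside $\Lambda(\alpha)$. First I would collect the two structural inputs from \cite{L91}. On the one hand, the nilpotent locus of $E_\Omega(\alpha)$ has only finitely many $G(\alpha)$-orbits, indexed precisely by $\tilde{\mathbf{Z}}(\alpha)$ through the Krull--Schmidt decomposition into the indecomposables $(V(k',k), x(k',k))$; write $\mathcal{O}_f$ for the orbit attached to $f$. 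On the other hand, $\Lambda(\alpha)$ is a Lagrangian subvariety of the symplectic vector space $E(\alpha)=T^*E_\Omega(\alpha)$, hence is equidimensional of dimension $d:=\dim E_\Omega(\alpha)$.

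Next I would make the conormal description precise. Because the form $\langle\,\cdot\,,\,\cdot\,\rangle$ identifies $E_{\overline{\Omega}}(\alpha)$ with the cotangent fibre of $E_\Omega(\alpha)$ and is $G(\alpha)$-invariant, the equations $\mu_i(\chi)=0$ for all $i$ say exactly that $\overline{x}=\pi_{\overline{\Omega}}(\chi)$ annihilates the tangent space to the $G(\alpha)$-orbit through $x=\pi_\Omega(\chi)$. Hence, over each orbit,
\[
\pi_\Omega^{-1}(\mathcal{O}_f)\cap\{\chi:\mu_i(\chi)=0\ \forall i\}=\mathcal{C}_f,
\]
the conormal bundle, and $\Lambda_f:=\Lambda(\alpha)\cap\pi_\Omega^{-1}(\mathcal{O}_f)=\mathcal{C}_f\cap\{\chi:\chi\text{ nilpotent}\}$. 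Since $\mathcal{C}_f$ is a vector bundle over the irreducible orbit $\mathcal{O}_f$, it is irreducible of dimension $\dim\mathcal{O}_f+\operatorname{codim}\mathcal{O}_f=d$; thus each stratum $\Lambda_f$ has dimension at most $d$, with equality if and only if the whole conormal bundle $\mathcal{C}_f$ is contained in $\Lambda(\alpha)$.

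The heart of the argument, and the step I expect to be the main obstacle, is the aperiodicity criterion: $\mathcal{C}_f\subseteq\Lambda(\alpha)$, equivalently every conormal direction $\overline{x}$ over $\mathcal{O}_f$ yields a nilpotent $\chi=x+\overline{x}$, precisely when $f$ is aperiodic. Here I would put $x$ in canonical form as a direct sum of segment modules $(V(k',k),x(k',k))$ with multiplicities $f(k',k)$ and analyze the oriented paths in $\chi$. If $f$ is periodic, so that $f(k'+j,k+j)>0$ for all $0\le j\le n$ for some $k'$, the corresponding segment summands wrap once around the affine cycle and allow a generic conormal $\overline{x}$ to link them into a nonzero closed path of arbitrary length; nilpotency then fails on a dense open subset of $\mathcal{C}_f$, so $\dim\Lambda_f<d$. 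Conversely, aperiodicity produces in every length-$(n+1)$ window a missing residue, and this gap forces every sufficiently long composite through $\chi$ to vanish irrespective of $\overline{x}$, so $\mathcal{C}_f\subseteq\Lambda(\alpha)$ and $\dim\Lambda_f=d$.

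Finally I would assemble the conclusion. By the equidimensionality of dimension $d$, the irreducible components of $\Lambda(\alpha)$ are exactly the closures of the strata $\Lambda_f$ of maximal dimension $d$. By the dimension bound together with the aperiodicity criterion, these are precisely the $\mathcal{C}_f=\overline{\mathcal{C}}_f$ with $f$ aperiodic. Distinct aperiodic $f$ give distinct orbits $\mathcal{O}_f$, hence distinct closures, and every component arises this way because its generic point lies in some stratum $\Lambda_f$ of dimension $d$, forcing $f$ aperiodic and the component to equal $\overline{\mathcal{C}}_f$. This yields the asserted bijection $f\mapsto\overline{\mathcal{C}}_f$.
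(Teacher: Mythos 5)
The paper does not prove this theorem; it is imported verbatim from \cite{L91}, so there is no internal argument to compare you against. Your architecture is, in fact, the standard one and essentially Lusztig's: stratify $\Lambda(\alpha)$ by the finitely many $G(\alpha)$-orbits $\mathcal{O}_f$ on the nilpotent locus of $E_\Omega(\alpha)$ (indexed by $\tilde{\mathbf{Z}}(\alpha)$ via Krull--Schmidt), observe that the moment-map equations over a fixed $x$ cut out exactly the conormal space to $\mathcal{O}_f$, so that $\Lambda_f:=\Lambda(\alpha)\cap\pi_\Omega^{-1}(\mathcal{O}_f)$ is the closed subset of nilpotent points of the irreducible, $d$-dimensional $\mathcal{C}_f$, and finish by a dimension count. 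The concluding bookkeeping (distinctness of the closures, every component arising as some $\overline{\mathcal{C}}_f$) is sound. Be aware, however, that the equidimensionality of $\Lambda(\alpha)$ in dimension $d=\dim E_\Omega(\alpha)$ is itself a nontrivial theorem of \cite{L91}, proved by a separate inductive argument, and it is genuinely needed here: without it, a stratum $\Lambda_f$ of dimension $<d$ attached to a periodic $f$ could a priori still contribute an irreducible component. So your argument is really a reduction of the stated bijection to two other results of the same source.

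The genuine gap is the step you yourself single out: the equivalence ``$\mathcal{C}_f\subseteq\Lambda(\alpha)$ if and only if $f$ is aperiodic.'' The periodic direction of your sketch identifies the right mechanism (a full cyclic family $f(k',k),\dots,f(k'+n,k+n)>0$ admits a conormal $\overline{x}$ linking the segments into a nonzero closed cycle, and a single non-nilpotent point of the irreducible $\mathcal{C}_f$ already forces $\dim\Lambda_f<d$). But the converse --- that aperiodicity forces \emph{every} $\chi=x+\overline{x}$ with $\overline{x}$ conormal to be nilpotent --- is the real content of the theorem, and your heuristic does not establish it. In particular, the phrase ``aperiodicity produces in every length-$(n+1)$ window a missing residue'' misdescribes the condition: a single segment $V(k',k)$ with $k-k'\ge n$ meets every residue in every window, yet the corresponding $f$ is aperiodic (and indeed every conormal direction over it is a positive-degree polynomial in $x$, hence nilpotent); what aperiodicity excludes is a complete family of segments of one fixed length occurring in all $n+1$ cyclic positions. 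The missing argument must analyze which maps between segment summands can appear as components of a conormal $\overline{x}$ and show that any nonvanishing cycle built from $x$ and such an $\overline{x}$ forces a full cyclic family of segments, i.e.\ periodicity of $f$. As written, your proposal sets up the correct framework but leaves this central step, which is the substance of Lusztig's theorem, unproved.
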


In a similar manner, for a pair $(k \ge k')$ of integers, let $\overline{x}(k,k'):V(k', k) \to V(k', k)$ be the $\C$-linear map sending $\mathsf{e}_i$ to $\mathsf{e}_{i-1}$, where $\mathsf{e}_{k'-1}=0$. Then the representation $(V(k', k), \overline{x}(k, k'))$ of the quiver $(I,\overline{\Omega})$ is indecomposable and nilpotent, and the isomorphism class of $(V(k', k), \overline{x}(k, k'))$ does not change when $k$ and $k'$ are simultaneously translated by a multiple $n+1$. Any indecomposable nilpotent finite-dimensional representation of the quiver $(I,\overline{\Omega})$ is isomorphic to $(V(k', k), \overline{x}(k, k'))$ for some pair $(k \ge k') $.
Let $\overline{\mathbf{Z}}$ be the set of all pairs $(k \ge k')$ of integers defined up to simultaneous translation by a multiple of $n+1$ and let $\tilde{\overline{\mathbf{Z}}}$ be the set of all functions from $\overline{\mathbf{Z}}$ to $\Z_{\ge 0}$ with finite support.
Then the set of $G(\alpha)$-orbits on the set of nilpotent elements in $E_{\overline{\Omega}}(\alpha)$ is naturally indexed by the subset $\tilde{\overline{\mathbf{Z}}}(\alpha)$ of $\tilde{\overline{\mathbf{Z}}}$ such that, for $f \in \tilde{\overline{\mathbf{Z}}}(\alpha)$,
$$ \sum_{k \ge k'} f(k, k') \cdot \# \{ j|\ k \ge j \ge k',\ j\equiv i\ ( \text{mod}\ n+1) \} = \dim V_i\ (i\in I).$$
Here the sum is taken over all $k \ge k'$ up to simultaneous
translation by a multiple of $n+1$. An element $f \in
\tilde{\overline{\mathbf{Z}}}(\alpha)$ is {\it aperiodic} if, for
any $k \ge  k'$, not all integers $f(k,k'),f(k+1,k'+1), \ldots,
f(k+n, k'+n) $ are greater than zero. Then one can show that there
is  a 1-1 correspondence between the set of aperiodic elements in
$\tilde{\overline{\mathbf{Z}}}(\alpha)$ and $\Irr\Lambda(\alpha)$.

Moreover, Kashiwara and Saito \cite{KS97} gave a crystal structure on
$$ \mathbb{B}(\infty) := \bigsqcup _{\alpha \in Q^+} \Irr \Lambda(\alpha),$$
and proved the following theorem.

\begin{Thm}[\cite{KS97}]\label{Thm K-S geometric crystal} There is a unique crystal isomorphism
$  \mathbb{B}(\infty) \cong B(\infty).$
\end{Thm}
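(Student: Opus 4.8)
The plan is to make the crystal structure on $\mathbb{B}(\infty)$ fully explicit and then invoke Kashiwara's abstract characterization of $B(\infty)$. The basic invariant underlying the Kashiwara operators is defined as follows: for $\chi = x + \overline{x} \in \Lambda(\alpha)$ and $i \in I$, set
\[
\varepsilon_i(\chi) = \dim \operatorname{coker}\bigl( \textstyle\bigoplus_{h \in H,\ \qin(h)=i} V_{\qout(h)} \xrightarrow{\ (\chi_h)\ } V_i \bigr),
\]
the codimension of the span of the images of the arrows pointing into the vertex $i$. This function is upper semicontinuous on $\Lambda(\alpha)$, so for each irreducible component $\Lambda \in \Irr\Lambda(\alpha)$ it attains a generic (minimal) value on a dense open subset; I would take $\varepsilon_i(\Lambda)$ to be this value, $\wt(\Lambda) = -\alpha$, and $\varphi_i(\Lambda) = \varepsilon_i(\Lambda) + \langle h_i, \wt(\Lambda)\rangle$.

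First I would construct $\tilde{e}_i$ and $\tilde{f}_i$ from the stratification $\Lambda(\alpha) = \bigsqcup_c \Lambda(\alpha)_{i,c}$ by the value $c$ of $\varepsilon_i$, using the correspondence
\[
\Lambda(\alpha - \alpha_i)_{i,c-1} \xleftarrow{\ p_1\ } \widetilde{\Lambda} \xrightarrow{\ p_2\ } \Lambda(\alpha)_{i,c},
\]
where $\widetilde{\Lambda}$ parametrizes an element of the smaller variety together with the data of enlarging $V_i$ by one dimension. One shows $p_1$ and $p_2$ are smooth with irreducible fibres, so that closures of preimages and images induce mutually inverse bijections $\Irr\Lambda(\alpha-\alpha_i)_{i,c-1} \leftrightarrow \Irr\Lambda(\alpha)_{i,c}$. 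Since each component meets its generic stratum densely, these assemble into operators $\tilde{e}_i \colon \Irr\Lambda(\alpha) \to \Irr\Lambda(\alpha-\alpha_i) \sqcup \{0\}$ and $\tilde{f}_i$ in the opposite direction. Verifying the crystal axioms then reduces to bookkeeping of weights and of the generic value of $\varepsilon_i$, controlled by the conormal-bundle description of Theorem~\ref{Thm Lusztig conormal bundle}, which exhibits each component as a Lagrangian conormal bundle and in particular pins its dimension to $\dim E_\Omega(\alpha)$.

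The crux is the identification $\mathbb{B}(\infty) \cong B(\infty)$, for which I would apply Kashiwara's characterization theorem: a crystal with a marked element, equipped with a second crystal structure $\tilde{e}_i^{*}, \tilde{f}_i^{*}$, is isomorphic to $B(\infty)$ once a short list of axioms holds (a unique element of weight $0$; every other element moved upward by some $\tilde{e}_i$; nonnegativity of all $\varepsilon_i, \varepsilon_i^{*}$; and the compatibility relations between the starred and unstarred operators). The second structure is supplied by geometry: the involution interchanging $\Omega$ and $\overline{\Omega}$, i.e.\ $\chi = x + \overline{x} \mapsto \chi^{*} := \overline{x} + x$, preserves nilpotency and the condition $[x,\overline{x}] = 0$, hence restricts to an involution of each $\Lambda(\alpha)$ permuting irreducible components; conjugating the unstarred operators by it yields $\tilde{e}_i^{*}, \tilde{f}_i^{*}$. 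The marked element is the unique component of $\Lambda(0)$, a point of weight $0$.

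I expect the main obstacle to be the verification of the compatibility axioms between the two structures: for $i \neq j$ that the correspondences defining $\tilde{e}_i$ and $\tilde{e}_j^{*}$ commute, and for $i = j$ that $\varepsilon_i$ and $\varepsilon_i^{*}$ obey Kashiwara's diagonal inequality. This is precisely where the geometry must be analyzed rather than handled formally --- one must track how simultaneously adjusting the image data at vertex $i$ and the dual data (for the starred operator) act on a generic point of a component, and rule out degenerate coincidences on the relevant strata. Once these relations are established, Kashiwara's theorem produces a crystal isomorphism $\mathbb{B}(\infty) \xrightarrow{\sim} B(\infty)$ carrying the class of $\Lambda(0)$ to the highest weight element $\mathbf{1}$; uniqueness is then automatic, since $B(\infty)$ is connected under the $\tilde{f}_i$ and its only element of weight $0$ is $\mathbf{1}$, so any crystal automorphism fixing $\mathbf{1}$ is the identity.
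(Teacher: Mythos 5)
The paper does not prove this theorem: it is quoted from Kashiwara--Saito \cite{KS97} and used as a black box, so there is no internal argument to compare yours against. Measured against the actual argument of \cite{KS97}, your outline follows the right strategy: the generic (minimal) value of the upper semicontinuous function $\varepsilon_i$ on each irreducible component, crystal operators built from correspondences between the strata $\Lambda(\alpha)_{i,c}$ together with the Lagrangian/conormal-bundle dimension count, and an appeal to Kashiwara's characterization of $B(\infty)$ via a second (``starred'') crystal structure, with uniqueness at the end coming from connectedness and the uniqueness of the weight-$0$ element. Those parts are sound.

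There are, however, two gaps. First, the map you propose to supply the starred structure, $\chi = x + \overline{x} \mapsto \overline{x} + x$, is literally the identity on $E(\alpha) = E_{\Omega}(\alpha) \oplus E_{\overline{\Omega}}(\alpha)$: the two summands are different spaces, $\bigoplus_{i}\Hom(V_{i-1},V_i)$ versus $\bigoplus_{i}\Hom(V_i,V_{i-1})$, and addition in a direct sum is commutative, so there is nothing to ``swap.'' Moreover $\varepsilon_i$ already uses \emph{all} arrows into $i$, from both orientations, so no reshuffling of $\Omega$ against $\overline{\Omega}$ can produce $\varepsilon_i^{*}$. In \cite{KS97} the second structure comes instead from replacing the cokernel of the maps \emph{into} $V_i$ by the kernel of the maps \emph{out of} $V_i$ (equivalently, from transposing the whole representation onto the dual spaces, an operation that does preserve $\Lambda(\alpha)$); as written, your construction does not yield a second crystal structure at all. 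Second, you correctly identify the verification of Kashiwara's compatibility axioms between the two structures as the crux and then defer it entirely; since that verification is the mathematical content of the theorem, the proposal remains a plan rather than a proof even after the starred structure is repaired.
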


Now we introduce a description of Nakajima's quiver varieties presented in \cite{N94}. Given $\alpha \in Q^+$ and $
\lambda \in P^+$, we set $W = W(\lambda)$ (resp. $W_i = W_i(\lambda)$) and let
$$ E(\lambda, \alpha) = \Lambda(\alpha) \times \sum_{i \in I} \Hom(V_i, W_i). $$
The group $G(\alpha)$ acts on $E(\lambda, \alpha)$ by $(g, (\chi,
\mathfrak{t})) = (g \chi g^{-1}, \mathfrak{t} g^{-1})$. For $\chi
\in \Lambda(\alpha)$, an $I$-graded subspace $S$ of $V(\alpha)$ is
{\it $\chi$-stable} if $ \chi_{h}(S_{\qout(h)}) \subset
S_{\qin(h)} $ for all $h \in H $. An element $(\chi, \mathfrak{t})
\in E(\lambda, \alpha )$ is called a {\it stable point} of
$E(\lambda, \alpha)$ if it satisfies the following conditions: if
$S$ is a $\chi$-stable subspace of $V$ with $\mathfrak{t}_i(S_i)=0\
(i\in I)$, then $S = 0$. Let $E(\lambda, \alpha)^{st}$ be the set
of all stable points of $E(\lambda, \alpha)$, and define
$$\Lambda(\lambda, \alpha) = E(\lambda, \alpha)^{st} / G(\alpha).$$

Let $\Irr \Lambda(\lambda, \alpha)$ (resp. $\Irr E(\lambda, \alpha)$) be the set of all irreducible components of $\Lambda(\lambda, \alpha)$ (resp. $E(\lambda, \alpha)$). Since $\Irr \Lambda(\lambda, \alpha)$ can be identified with
$$ \{ Z \in \Irr E(\lambda, \alpha)|\ Z \cap E(\lambda, \alpha)^{st} \ne \emptyset \}, $$
each irreducible component $X$ in $\Irr \Lambda(\lambda, \alpha)$
can be written as
$$ X = \left( \left( X_0 \times \sum_{i\in I} \Hom(V_i, W_i)  \right) \cap E(\lambda, \alpha)^{st}
\right) / G(\alpha) $$
for some irreducible component $X_0$ in $\Irr \Lambda(\alpha)$.

In \cite{St02}, Saito gave a crystal structure on
$$ \mathbb{B}(\lambda) := \bigsqcup_{\alpha \in Q^+} \Irr \Lambda(\lambda, \alpha) ,$$
and proved the following theorem.

\begin{Thm}[\cite{St02}] \label{Thm saito geometric crystal}
There is a unique crystal isomorphism $\mathbb{B}(\lambda) \cong B(\lambda)$.
\end{Thm}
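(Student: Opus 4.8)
The plan is to deduce the level-$\lambda$ statement from the already established isomorphism $\mathbb{B}(\infty)\cong B(\infty)$ of Theorem~\ref{Thm K-S geometric crystal}, by comparing $\mathbb{B}(\lambda)$ with the image of the embedding $\iota_\lambda$ of \eqref{Eqn crystal embedding}. The starting point is the structural fact recorded above: every $X\in\Irr\Lambda(\lambda,\alpha)$ arises from a unique irreducible component $X_0\in\Irr\Lambda(\alpha)$ as the closure of the stable locus sitting over $X_0$. This assignment defines a map $\pi:\mathbb{B}(\lambda)\to\mathbb{B}(\infty)$, $X\mapsto X_0$, and the whole proof amounts to showing that $\pi$ is injective, that its image is exactly $\iota_\lambda(B(\lambda))$, and that $\pi$ intertwines the crystal structures.

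First I would equip $\mathbb{B}(\lambda)$ with a crystal structure by mimicking the Lusztig--Kashiwara--Saito construction used on $\mathbb{B}(\infty)$. For $X\in\Irr\Lambda(\lambda,\alpha)$ set $\wt(X)=\lambda-\alpha$, and define $\varepsilon_i(X)$ to be the value, generic on $X$, of the $i$-th corank invariant attached to Nakajima's varieties, which incorporates the framing maps $\mathfrak{t}_i$; then set $\varphi_i(X)=\varepsilon_i(X)+\langle h_i,\wt(X)\rangle$ and define $\tilde e_i X,\ \tilde f_i X$ as the components whose generic point is obtained by lowering, resp.\ raising, the $i$-layer of the representation, exactly as for $\Lambda(\alpha)$. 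One then checks the abstract crystal axioms (a)--(g); this is a routine transcription of the $\mathbb{B}(\infty)$ arguments, the one genuinely new point being that these operators must preserve stability, so that $\tilde e_i,\tilde f_i$ send stable components to stable components or to $0$.

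The heart of the argument is to identify the image of $\pi$. I would show that the generic point of $X_0\in\Irr\Lambda(\alpha)$ supports a stable point of $E(\lambda,\alpha)$ if and only if there is, generically, no nonzero $\chi$-stable subspace of $V$ annihilated by $\mathfrak{t}$, and that this nonvanishing condition translates, under $\mathbb{B}(\infty)\cong B(\infty)$, into the inequalities $\varepsilon_i^{*}(b)\le\langle h_i,\lambda\rangle$ for all $i\in I$, where $b$ is the element of $B(\infty)$ corresponding to $X_0$ and $\varepsilon_i^{*}=\varepsilon_i\circ{*}$ is the star-involution datum (geometrically, ${*}$ is the involution exchanging $\Omega$ and $\overline\Omega$ on $E(\alpha)$). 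By Kashiwara's description of $\iota_\lambda$, the set $\{\,b\in B(\infty)\mid\varepsilon_i^{*}(b)\le\langle h_i,\lambda\rangle\ \forall i\,\}$ is precisely $\iota_\lambda(B(\lambda))$, so $\pi$ is a bijection onto $\iota_\lambda(B(\lambda))$.

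Finally I would verify compatibility of the operators and conclude. Tracking how raising and lowering the $i$-layer interacts with the stable locus shows $\pi(\tilde f_i X)=\tilde f_i\pi(X)$ whenever $\tilde f_i X\ne 0$, and likewise for $\tilde e_i$, so that transporting the crystal structure from $\iota_\lambda(B(\lambda))\cong B(\lambda)$ back along $\pi$ reproduces the operators defined above; the weight, $\varepsilon_i$ and $\varphi_i$ match by construction. The unique component with $\alpha=0$ is the highest weight element and maps to $u_\lambda$, and since $B(\lambda)$ is generated from $u_\lambda$ by the $\tilde f_i$, both existence and uniqueness of the crystal isomorphism $\mathbb{B}(\lambda)\cong B(\lambda)$ follow. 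I expect the main obstacle to be the third step: matching the geometric stability condition with the combinatorial $*$-crystal inequalities cutting out $\iota_\lambda(B(\lambda))$, and simultaneously showing that the geometric $\tilde e_i,\tilde f_i$ respect stability, which is exactly what makes $\pi$ a morphism of crystals.
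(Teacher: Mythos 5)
The paper does not prove this statement at all; it is imported verbatim from \cite{St02}. Your outline --- pushing $\mathbb{B}(\lambda)$ into $\mathbb{B}(\infty)$ via the underlying components $X_0$, identifying the components admitting stable points with the set $\{\,b\in B(\infty)\mid \varepsilon_i^{*}(b)\le\langle h_i,\lambda\rangle\ \forall i\,\}=\iota_\lambda(B(\lambda))$, and checking that the geometric operators respect stability --- is essentially Saito's actual argument, so it agrees with the proof the paper relies on.
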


In \cite{FS03}, Frenkel and Savage gave an enumeration of $\Irr
\Lambda(\lambda, \alpha)$ in terms of Young and Maya diagrams for
type $A_n^{(1)}$.
Combining Theorem \ref{Thm K-S geometric crystal} and Theorem
\ref{Thm saito geometric crystal} with $\eqref{Eqn crystal
embedding}$, we obtain an injective map
\begin{align} \label{Eqn crystal embedding in geom}
 \iota_{\lambda}: \mathbb{B}(\lambda) \hookrightarrow \mathbb{B}(\infty).
\end{align}
For each irreducible component $X_0 \in \iota_{\Lambda_k}(
\mathbb{B}(\Lambda_k))$, Frenkel and Savage constructed a special
point in $X_0 \times \sum_{i \in I} \Hom(V_i, W_i)$ which is not
killed by the stability condition, and showed that there is a 1-1
correspondence between the set of such special points and the set of
$(n+1)$-reduced colored Young diagrams. Savage later established a
crystal isomorphism between $\mathbb{B}(\lambda)$ and Young walls
for quantum affine algebras of  type $A_n^{(1)}$ and $D_n^{(1)}$ in
\cite{Sv06}.

We briefly recall the result of \cite{FS03} for type $A_n^{(1)}$ in
terms of Young walls. Note that the orientation appeared in
\cite{FS03} is $\overline{\Omega}$. Take a Young wall $Y^n \in
\mathcal{Y}^n(\Lambda_k)$ such that $\wt(Y^n) = \alpha$. Let $l_i$
be the length of the $i$-th row of the Young wall $Y^n$ $(i\ge1)$
and let $N$ be the height of $Y^n$. Set
$$ A_{Y^n} := \{ (l_i - i +k, 1-i+k)\ |\ 1 \le i \le N  \} \subset \overline{\mathbf{Z}}$$
and consider the function $f \in \tilde{\overline{\mathbf{Z}}}(\alpha)$ given by
$$ f(s,s') = \left\{
               \begin{array}{ll}
                 1 & \hbox{ if } (s,s') \in A_{Y^n}, \\
                 0 & \hbox{otherwise.}
               \end{array}
             \right.
 $$
Note that $f$ is aperiodic. Let $\overline{\mathcal{C}}_f$ be the
closure of the conormal bundle of the $G(\alpha)$-orbit
$\mathcal{O}_f$ in $E_{\overline{\Omega}}(\alpha)$ corresponding to
$f$, and define the irreducible component
$$ X_{Y^n} := \left(  \left( \overline{\mathcal{C}}_f \times \sum_{i\in I} \Hom(V_i, W_i)  \right) \cap E(\Lambda_k, \alpha)^{st}  \right) / G(\alpha) \in \Irr\Lambda(\Lambda_k, \alpha) . $$
By \cite[Theorem 5.5]{FS03}, the map $Y^n \mapsto X_{Y^n}$ is a 1-1
correspondence between
$$\{ Y^n \in \mathcal{Y}^n(\Lambda_k) |\ \wt(Y^n) = \alpha \}\ \text{ and } \ \Irr \Lambda(\Lambda_k, \alpha).$$
Moreover, it is proved in \cite[Theorem 8.4.]{Sv06} that the map
$Y^n \mapsto X_{Y^n}$ from $\mathcal{Y}^n(\Lambda_k)$ to
$\mathbb{B}(\Lambda_k)$ is a crystal isomorphism. We would like to
point out that $ \overline{\mathcal{C}}_f =
\iota_{\Lambda_k}(X_{Y^n}) $.

Now we construct an element in the $G(\alpha)$-orbit $\mathcal{O}_f$ in $E_{\overline{\Omega}}(\alpha)$ from the Young wall $Y^n$.
Let  $\overline{\mathfrak{b}}_{ij}$ be
the $i$-th block from bottom in the $j$-th column of $Y^n$. Let $\Color(\overline{\mathfrak{b}}_{ij})$ be the color of
$\overline{\mathfrak{b}}_{ij}$, which is an element in $I$. Define
\begin{align}
o(\overline{\mathfrak{b}}_{ij}) &:= \# \{ \overline{\mathfrak{b}}_{rs} \in Y^n|\   \Color(\overline{\mathfrak{b}}_{rs}) = \Color(\overline{\mathfrak{b}}_{ij}),\  (r,s) \prec (i,j)    \},  \nonumber
\end{align}
where $\prec$ is the lexicographical order; i.e., $ (r,s) \prec (i,j) $ if and only if $r<i$ or ($r=i$ and $s<j$). We define
\begin{align} \label{Eqn Young Walls Bn}
 \overline{x}(Y^n) := \sum_{\overline{\mathfrak{b}}_{ij}\in Y^n,\ j > 0} \overline{\mathcal{E}}^{\Color(\overline{\mathfrak{b}}_{i,j})}_{o(\overline{\mathfrak{b}}_{i,j}),\ o(\overline{\mathfrak{b}}_{i,j-1})} \in E_{\overline{\Omega}}(\alpha).
\end{align}
For $1 \le i \le N$, we denote by $J_i$ the subspace of $V(\alpha)$ generated by
$$\{ v^{\Color(\overline{\mathfrak{b}}_{ij})}_{o(\overline{\mathfrak{b}}_{ij})} |\ 0 \le j < \l_i  \} .$$
By construction, one can show that $J_i$ is invariant under $\overline{x}(Y^n)$ and
the representation $ (J_i,\ \overline{x}(Y^n) |_{J_i}) $ of the quiver $(I,\overline{\Omega})$ is isomorphic to the representation $( V(1-i+k, l_i-i+k),\ \overline{x}(l_i-i+k, 1-i+k) )$ of the quiver $(I,\overline{\Omega})$ for $1 \le i < N$. Here, $\overline{x}(Y^n) |_{J_i}$ is the restriction of $\overline{x}(Y^n)$ on the invariant subspace $J_i$.
Hence $\overline{x}(Y^n)$ is contained in the $G(\alpha)$-orbit $\mathcal{O}_f$ corresponding $f$, which yields
\begin{align} \label{Eqn Conormal bundle of G-orbit of Yn}
\iota_{\Lambda_k}(X_{Y^n}) = \text{ the closure of the conormal bundle of the $G(\alpha)$-orbit of $\overline{x}(Y^n)$}.
\end{align}
%Let $\overline{y}_t$ be the $t$-th column of $Y^n$ for $t \in \Z_{\ge0}$. Note that $\overline{y}_t = \{ %\overline{\mathfrak{b}}_{it} \in Y^n|\  i \ge 1 \}$.
By a direct computation, for $t \in \Z_{\ge 0 }$, we have
\begin{align} \label{Eqn ker of Yn}
\ker(\overline{x}(Y^n) )^t & = \bigoplus_{i=1}^{N} \ker (\overline{x}(Y^n)|_{J_i})^t \nonumber \\
& = \bigoplus_{i=1}^{N} \Span_{\C}\{ v^{\Color(\overline{\mathfrak{b}}_{ij})}_{o(\overline{\mathfrak{b}}_{ij})} |\
\overline{\mathfrak{b}}_{ij} \in \text{the $i$-th row of }Y^n,\ j < t \} \nonumber \\
&= \Span_{\C}\{ v^{\Color(\overline{\mathfrak{b}}_{ij})}_{o(\overline{\mathfrak{b}}_{ij})} |\
\overline{\mathfrak{b}}_{ij} \in Y^n,\ j < t \}.
\end{align}
%and
%\begin{align} \label{Eqn wt of overline y}
%\wt(\overline{y}_t) &= \sum_{ \overline{\mathfrak{b}}_{it} \in \overline{y}_t} %\alpha_{\Color(\overline{\mathfrak{b}}_{it} )} \nonumber   \\
%&= \underline{\dim} \left( \Span_{\C} \{ v^{\Color(\overline{\mathfrak{b}}_{it})}_{o(\overline{\mathfrak{b}}_{it})} |\ %\overline{\mathfrak{b}}_{it} \in  \overline{y}_t \} \right)  \nonumber  \\
%&= \underline{\dim} \left( \Span_{\C}\{ v^{\Color(\overline{\mathfrak{b}}_{ij})}_{o(\overline{\mathfrak{b}}_{ij})} |\
%\overline{\mathfrak{b}}_{ij} \in Y^n,\ j < t+1 \} \right)- \underline{\dim} \left( \Span_{\C}\{ %v^{\Color(\overline{\mathfrak{b}}_{ij})}_{o(\overline{\mathfrak{b}}_{ij})} |\
%\overline{\mathfrak{b}}_{ij} \in Y^n,\ j < t \} \right)   \nonumber \\
%&= \underline{\dim}(\ker(\overline{x}(Y^n) )^{t+1}) - \underline{\dim}(\ker(\overline{x}(Y^n) )^t).
%\end{align}

In the same manner, we take a Young wall $Y^1 \in \mathcal{Y}^1(\Lambda_k)$ such that $\wt(Y^1)=\alpha$.
Denote by $X_{Y^1}$ the image of $Y^1$ under the crystal isomorphism
$$\mathcal{Y}^1(\Lambda_k)\overset{\sim}{\longrightarrow} \mathbb{B}(\Lambda_k) .$$
Let  $\mathfrak{b}_{ij}$ be the $i$-th block from bottom in the $j$-th column of $Y^1$, and
$\Color(\mathfrak{b}_{ij})$
the color of $\mathfrak{b}_{ij}$. Set
\begin{align}
o(\mathfrak{b}_{ij}) &:= \# \{ \mathfrak{b}_{rs} \in Y^1|\   \Color(\mathfrak{b}_{rs}) = \Color(\mathfrak{b}_{ij}),\  (r,s) \prec (i,j)    \},  \nonumber
\end{align}
where $\prec$ is the lexicographical order, and define
\begin{align} \label{Eqn Young Walls B1}
 x(Y^1) := \sum_{\mathfrak{b}_{ij}\in Y^1,\ j > 0} \mathcal{E}^{\Color(\mathfrak{b}_{i,j-1})}_{o(\mathfrak{b}_{i,j}),\ o(\mathfrak{b}_{i,j-1})} \in E_{\Omega}(\alpha).
\end{align}
Then we have
\begin{align} \label{Eqn Conormal bundle of G-orbit of Y1}
\iota_{\Lambda_k}(X_{Y^1})  =  \text{ the closure of the conormal bundle of the $G(\alpha)$-orbit of $x(Y^1)$}.
\end{align}
%Let $y_t$ be the $t$-th column of $Y^1$ for $t \in \Z_{\ge 0}$.
Moreover, we obtain
\begin{align}  \label{Eqn ker of Y1}
& \ker(x(Y^1) )^t = \Span_{\C}\{ v^{\Color(\mathfrak{b}_{ij})}_{o(\mathfrak{b}_{ij})} |\
\mathfrak{b}_{ij} \in Y^1,\ j < t \}
\end{align}
for $t\in \Z_{\ge 0}$.

%\begin{align}
%& \ker(x(Y^1) )^t = \Span_{\C}\{ v^{\Color(\mathfrak{b}_{ij})}_{o(\mathfrak{b}_{ij})} |\
%\mathfrak{b}_{ij} \in Y^1,\ j < t \}, \label{Eqn ker of Y1} \\
%& \wt(y_t) = \underline{\dim}(\ker(x(Y^1) )^{t+1}) - \underline{\dim}(\ker(x(Y^1) )^t) \label{Eqn wt of y}
%\end{align}
%for $t\in \Z_{\ge 0}$.

\vskip 2em

\begin{Exa} \label{Exa the points from Young walls} We use the same notations as in Example \ref{Exa Young walls}. Set
$$\alpha := \Lambda_0-\wt(b)= 4\alpha_0 + 4\alpha_1 + 4\alpha_2 +  3\alpha_3,$$
and let $X$ be the irreducible component in $\mathbb{B}(\Lambda_0)$ corresponding to $b$ via the crystal isomorphism given in Theorem \ref{Thm saito geometric crystal}. Then we have
\begin{align*}
x(Y^1) &= (\mathcal{E}^{0}_{00}+ \mathcal{E}^{0}_{11} + \mathcal{E}^{0}_{22}) + (\mathcal{E}^{1}_{10}+ \mathcal{E}^{1}_{21} + \mathcal{E}^{1}_{32}) + (\mathcal{E}^{2}_{00}+ \mathcal{E}^{2}_{11} + \mathcal{E}^{2}_{22}) + (\mathcal{E}^{3}_{00}+ \mathcal{E}^{3}_{11} + \mathcal{E}^{3}_{22}), \\
\overline{x}(Y^n) &= (\overline{\mathcal{E}}^{0}_{10} + \overline{\mathcal{E}}^{0}_{21} + \overline{\mathcal{E}}^{0}_{32}) + (\overline{\mathcal{E}}^{1}_{00} + \overline{\mathcal{E}}^{1}_{12} + \overline{\mathcal{E}}^{1}_{23}) + (\overline{\mathcal{E}}^{2}_{00} + \overline{\mathcal{E}}^{2}_{11} + \overline{\mathcal{E}}^{2}_{33}) + (\overline{\mathcal{E}}^{3}_{00} + \overline{\mathcal{E}}^{3}_{22} ),
\end{align*}
and $\iota_{\Lambda_0}(X)$ is the closure of the conormal bundle of
the $G(\alpha)$-orbit of $x(Y^1)$ (resp. $\overline{x}(Y^n)$).
However, we note that $$x(Y^1) + \overline{x}(Y^n) \notin
E(\alpha)$$ since $[x(Y^1), \overline{x}(Y^n)] \ne 0$.

\end{Exa}

\vskip 3em

\section{Quiver Varieties and the Perfect Crystals $B^1$, $B^n$}

In this section, we give an explanation of the 1-1 correspondence
between the geometric realization $\mathbb{B}(\Lambda_k)$ and the
path realization of the crystal $B(\Lambda_k)$ associated with the
perfect crystals $B^1$ and $ B^n$, and give a geometric
interpretation of the fundamental theorem of perfect crystals in the
case of the perfect crystals $B^1$ and $B^n$. Let $\alpha \in Q^+$
and let $\lambda$ be a dominant integral weight of level 1. Choose
an irreducible component $X$ in $\Irr \Lambda(\lambda, \alpha)$. For
a generic point $\chi = x + \overline{x} \in \iota_{\lambda}(X)$, we
will give an explicit description of the $\lambda$-path in $B^1$
(resp. $B^n$) corresponding to $X$ using the dimensions of the
spaces $\ker x^{i+1} / \ker x^i$ (resp. $\ker \overline{x}^{i+1} /
\ker \overline{x}^i $) for $i \ge 0$. For this purpose, we need a
couple of lemmas.

\begin{Lem} \label{Lem kernel is invariant}
Let $X_0$ be an irreducible component in $ \Irr \Lambda(\alpha)$. Then,
for any $\chi = x + \overline{x} \in X_0$ and $k \in \mathbb{Z}_{\ge 0}$, we have
\begin{enumerate}
\item[(a)] $\ker(x \overline{x})^k = \ker(\overline{x} x) ^k $,
\item[(b)] $\ker x^k$ and $\ker \overline{x}^k$ are $\chi$-stable,
\item[(c)] $\ker(x \overline{x})^k$ is $\chi$-stable.
\end{enumerate}
\end{Lem}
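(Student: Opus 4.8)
The plan is to reduce all three assertions to a single observation: for $\chi = x + \overline{x} \in X_0 \subset \Lambda(\alpha)$, the defining relation $\mu_i(\chi) = 0$ for all $i \in I$ is equivalent, by \eqref{Eqn commuting property of chi}, to the commutativity $[x, \overline{x}] = x\overline{x} - \overline{x}x = 0$ of $x$ and $\overline{x}$ viewed as endomorphisms of $V = V(\alpha)$. I would stress that irreducibility of $X_0$ plays no role here; the statement holds pointwise at every $\chi \in \Lambda(\alpha)$. Once commutativity is in hand, everything follows from the elementary fact that the kernel of a power of an operator $T$ is invariant under any operator commuting with $T$.

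For part (a), I would simply note that $\mu_i(\chi)=0$ on each $V_i$ says exactly that the block-diagonal endomorphisms $x\overline{x}$ and $\overline{x}x$ of $V$ coincide; hence $(x\overline{x})^k = (\overline{x}x)^k$ for every $k$, and the two kernels are equal. This is immediate and needs no computation.

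For part (b), the key point is that $x^k$ commutes with both $x$ and $\overline{x}$: commutation with $x$ is trivial, and $x^k \overline{x} = \overline{x}x^k$ follows from $x\overline{x} = \overline{x}x$ by induction on $k$. Thus for $v \in \ker x^k$ we get $x^k(xv) = x(x^k v) = 0$ and $x^k(\overline{x}v) = \overline{x}(x^k v) = 0$, so $\ker x^k$ is invariant under both $x$ and $\overline{x}$; the case of $\ker \overline{x}^k$ is symmetric. To conclude $\chi$-stability I would unwind the definition: since $H = \Omega \sqcup \overline{\Omega}$, an $I$-graded subspace $S$ is $\chi$-stable precisely when $x_i(S_{i-1}) \subset S_i$ and $\overline{x}_i(S_i) \subset S_{i-1}$ for all $i$, which is exactly the invariance just verified. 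For part (c), set $A := x\overline{x} = \overline{x}x$; then $A$ commutes with $x$ (indeed $xA = x^2\overline{x} = x(\overline{x}x) = (x\overline{x})x = Ax$, using $\overline{x}x = x\overline{x}$) and, symmetrically, with $\overline{x}$. Hence $A^k$ commutes with both, so $\ker A^k = \ker(x\overline{x})^k$ is invariant under $x$ and $\overline{x}$ and therefore $\chi$-stable.

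There is no serious obstacle: the whole content is the identification of the moment-map relation with the commutativity of $x$ and $\overline{x}$, already recorded in \eqref{Eqn commuting property of chi}. The only point requiring care is the grading bookkeeping in passing from ``invariant under $x$ and $\overline{x}$'' to the formal definition of $\chi$-stability, since $x$ and $\overline{x}$ shift the $I$-grading by $+1$ and $-1$; I would therefore phrase the stability conditions componentwise as $x_i(S_{i-1}) \subset S_i$ and $\overline{x}_i(S_i) \subset S_{i-1}$ and verify them directly from the commuting relations.
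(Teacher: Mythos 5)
Your proposal is correct and follows essentially the same route as the paper's proof: both reduce everything to the identity $[x,\overline{x}]=0$ from \eqref{Eqn commuting property of chi}, deduce invariance of the kernels of powers under $x$ and $\overline{x}$, and use the $I$-gradedness of these kernels to conclude $\chi$-stability. Your version merely spells out the componentwise bookkeeping (and the correct observation that irreducibility of $X_0$ is not used) that the paper leaves implicit.
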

\begin{proof}
Let $\chi = x + \overline{x} \in X_0$ and $k \in \mathbb{Z}_{\ge 0} $. By $\eqref{Eqn commuting property of chi}$ we have $[x, \overline{x}] = 0,$
which yields
$$   \chi (\ker x^k)  \subset \ker x^k, \  \chi (\ker \overline{x}^k)  \subset \ker \overline{x}^k\ \text{ and }\ \chi (\ker(x \overline{x})^k)  \subset \ker(x \overline{x})^k .$$
Our assertion follows from the fact that $\ker(x \overline{x})^k,\
\ker x^k$ and $\ker \overline{x}^k$ are $I$-graded vector spaces.
\end{proof}

\begin{Lem} \label{Lem kernels of x and overline x}
For each $X_0 \in  \Irr \Lambda(\alpha)$, there exists an open subset $U \subset X_0$ such that
\begin{align} \label{Eqn constant kernel - generic properties of X}
\ker x^k \cong \ker {x'}^k\ \text{ and }\ \ker \overline{x}^k \cong \ker {\overline{x}'}^k
\end{align}
for any $\chi = x+\overline{x},\ \chi' = x'+\overline{x}' \in U$ and $k \in \Z_{\ge 0}$.
\end{Lem}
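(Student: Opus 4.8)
The plan is to show that the $I$-graded dimension vector of $\ker x^k$ (and of $\ker \overline{x}^k$) is an upper semicontinuous function of $\chi$ on $X_0$, and then take $U$ to be the locus where all of these functions simultaneously attain their minimum. Since $X_0$ is irreducible and an isomorphism of $I$-graded vector spaces is detected precisely by equality of all graded dimensions, this will produce the desired $U$.

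First I would reduce to finitely many conditions. Every $\chi \in \Lambda(\alpha)$ is nilpotent, so the endomorphism $x$ of $V = V(\alpha)$ satisfies $x^N = 0$ with $N = \dim V = \Ht(\alpha)$, uniformly in $\chi$; hence $\ker x^k = V$ for all $k \ge N$, and likewise for $\overline{x}$. Thus it suffices to control $\ker x^k$ and $\ker \overline{x}^k$ for $0 \le k \le N$. Because $x$ carries $V_{i-1}$ into $V_i$, the power $x^k$ carries $V_i$ into $V_{i+k}$, so by Lemma~\ref{Lem kernel is invariant} the graded piece $(\ker x^k)_i$ equals $\ker(x^k|_{V_i})$ and
$$\dim_{\C}(\ker x^k)_i = \dim_{\C} V_i - \rank(x^k|_{V_i}).$$

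Next I would establish the semicontinuity. In the linear coordinates on $E_\Omega(\alpha)$ the matrix entries of $x^k|_{V_i}$ are polynomial in $\chi$, so for each integer $r$ the condition $\rank(x^k|_{V_i}) \ge r$ is the non-vanishing of some $r \times r$ minor and hence defines an open subset of $X_0$. Therefore $\rank(x^k|_{V_i})$ is lower semicontinuous and $\dim_{\C}(\ker x^k)_i$ is upper semicontinuous on $X_0$. Consequently the locus on which $\dim_{\C}(\ker x^k)_i$ attains its minimum value over $X_0$ is a nonempty open subset $U^x_{k,i}$; the analogous sets $U^{\overline{x}}_{k,i}$ are defined for $\overline{x}$.

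Finally I would set
$$U = \bigcap_{0 \le k \le N}\ \bigcap_{i \in I} \left( U^x_{k,i} \cap U^{\overline{x}}_{k,i} \right),$$
a finite intersection of nonempty open subsets of the irreducible variety $X_0$, hence itself nonempty and open (dense). For $\chi, \chi' \in U$ and any $k$, every graded dimension $\dim_{\C}(\ker x^k)_i$ agrees with its minimal value, and similarly for $\overline{x}$; together with Lemma~\ref{Lem kernel is invariant}, which guarantees that these kernels are $I$-graded, this yields $\ker x^k \cong \ker {x'}^k$ and $\ker \overline{x}^k \cong \ker {\overline{x}'}^k$ as $I$-graded vector spaces, the cases $k \ge N$ being trivial. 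The only real subtlety is organizational rather than deep: one must extract a uniform nilpotency bound $N$ so that infinitely many conditions collapse to finitely many, and one must read the symbol $\cong$ in the statement as an isomorphism of $I$-graded spaces, so that it becomes equivalent to the equality of all the graded kernel dimensions.
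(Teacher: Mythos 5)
Your proof is correct, but it takes a genuinely different route from the paper. You argue by semicontinuity: since $x^k$ maps $V_i$ into $V_{i+k}$, each graded kernel dimension $\dim(\ker x^k)_i = \dim V_i - \rank(x^k|_{V_i})$ is upper semicontinuous (the entries of $x^k|_{V_i}$ being polynomial in $\chi$), the uniform nilpotency bound $x^{\dim V}=0$ reduces everything to finitely many conditions, and the common minimizing locus is a nonempty open subset of the irreducible variety $X_0$. The paper instead invokes Theorem \ref{Thm Lusztig conormal bundle}: $X_0$ is the closure of the conormal bundle of a single $G(\alpha)$-orbit in $E_\Omega(\alpha)$, so there is an open $U_1\subset X_0$ with $\pi_\Omega(U_1)$ contained in one orbit, and likewise an open $U_2$ for $E_{\overline\Omega}(\alpha)$; on $U=U_1\cap U_2$ any two points satisfy $x=gx'g^{-1}$ and $\overline{x}=\overline{g}\,\overline{x}'\,\overline{g}^{-1}$, which immediately carries $\ker{x'}^k$ onto $\ker x^k$. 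Your argument is more elementary and more general (it needs nothing about the classification of irreducible components, only irreducibility and nilpotency), and it is a complete proof of the lemma as stated. What it buys less of is the stronger conclusion the paper actually extracts from its construction of $U$ and reuses later --- in Lemma \ref{Lem restriction of x} and Theorem \ref{Thm paths of B1 and Bn} the authors explicitly use that the $x$-parts of any two points of $U$ lie in a single $G(\alpha)$-orbit, not merely that their kernel filtrations have equal graded dimensions; so if one adopted your proof, those later arguments would need to be re-based on the orbit statement separately. (A minor remark: your appeal to Lemma \ref{Lem kernel is invariant} for the $I$-grading of $\ker x^k$ is unnecessary, since $\ker x^k=\bigoplus_i\ker(x^k|_{V_i})$ already follows from $x^k(V_i)\subset V_{i+k}$.)
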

\begin{proof}
By Theorem \ref{Thm Lusztig conormal bundle}, there is an open subset $U_1 \subset X_0$ such that $\pi_{\Omega}(U_1)$ is contained in the $G(\alpha)$-orbit of some element in $E_{\Omega}(\alpha)$. In the same manner, there is an open subset $U_2  \subset X_0$ such that $\pi_{\overline{\Omega}}(U_2)$ is contained in the $G(\alpha)$-orbit of some element in $ E_{\overline{\Omega}}(\alpha)$. Set $U = U_1 \cap U_2 \subset X_0$. Then, by construction, for any $\chi = x + \overline{x},\ \chi' = x' + \overline{x}' \in U$, there exist $g, \overline{g} \in G(\alpha)$ such that
$$ x = gx'g^{-1} \quad \text{ and } \quad  \overline{x} = \overline{g}\overline{x}'\overline{g}^{-1} ,$$
which yield, for any $k \in \Z_{\ge 0}$,
$$ \ker x^k = \ker (g{x'}g^{-1})^{k} = g (\ker {x'}^{k})\ \ \text{ and } \ \ \ker \overline{x}^k = \ker (\overline{g}{\overline{x}'}\overline{g}^{-1})^{k} = \overline{g} (\ker {\overline{x}'}^{k}). $$
\end{proof}

An element $\chi \in X_0$ in the open subset $U \subset X_0$ in
Lemma \ref{Lem kernels of x and overline x} will be called a {\it
generic point}. Thanks to Lemma \ref{Lem kernels of x and overline
x}, we may consider
$$ \underline{\dim}(\ker x^k)\ \text{ and }\ \underline{\dim}(\ker x^{k+1} /\ker x^k )\
(\text{resp. } \underline{\dim}(\ker \overline{x}^k)\ \text{ and }\
\underline{\dim}(\ker \overline{x}^{k+1} /\ker \overline{x}^k ) )$$
for a generic point $\chi = x + \overline{x} $ in an irreducible
component $X_0 \in \Irr \Lambda(\alpha)$.
%Here, by a {\it generic
%point} $\chi \in X_0$, we mean an element in the open subset $U
%\subset X_0$ given in Lemma \ref{Lem kernels of x and overline x}.
Recall the injective map given in $\eqref{Eqn crystal embedding in
geom}$
\begin{align*}
 \iota_{\Lambda_k}: \mathbb{B}(\Lambda_k) \hookrightarrow \mathbb{B}(\infty)
\end{align*}
for $0 \le k \le n$. Applying Lemma \ref{Lem kernel is invariant}
and Lemma \ref{Lem kernels of x and overline x} to $\eqref{Eqn
Young Walls B1}$ and $\eqref{Eqn Young Walls Bn}$, we obtain the
following theorem.
\begin{Thm} \label{Thm paths of B1 and Bn} {\rm(}cf. \cite{FS03}{\rm)}
Let
$$ \mathbf{p}_k^1: \mathbb{B}(\Lambda_k) \longrightarrow \mathcal{P}^1(\Lambda_k)
\ \text{ {\rm (}resp. } \mathbf{p}_k^n: \mathbb{B}(\Lambda_k) \longrightarrow \mathcal{P}^n(\Lambda_k) \ \text{\rm )}$$
be the unique crystal isomorphism given by Theorem \ref{Thm crystal
iso of path realization} and Theorem \ref{Thm saito geometric
crystal}, and take an irreducible component $X \in
\mathbb{B}(\Lambda_k)$. Then, for a generic point $\chi = x +
\overline{x} \in \iota_{\Lambda_k}(X)$, we have
\begin{enumerate}
\item[(a)]
$$ \mathbf{p}^1_k(X) = (\ldots, \mathbf{b}_{a_i}, \ldots, \mathbf{b}_{a_1}, \mathbf{b}_{a_0}  ), $$
where $a_i \equiv \dim(\ker x^{i+1} /\ker x^i ) - i + k \ ( \text{mod}\ n+1)$ for all $i \ge 0$,
\item[(b)]
$$ \mathbf{p}^n_k(X) = (\ldots, \overline{\mathbf{b}}_{b_i}, \ldots, \overline{\mathbf{b}}_{b_1}, \overline{\mathbf{b}}_{b_0}  ), $$
where $b_i \equiv 1-\dim( \ker \overline{x}^{i+1} /\ker \overline{x}^i ) + i+k \ ( \text{mod}\ n+1)$ for all $i \ge 0$.
\end{enumerate}
\end{Thm}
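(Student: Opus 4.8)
The plan is to reduce the theorem to the already-established dictionary between Young walls and paths, using the kernel filtrations as the bridge. Concretely, fix an irreducible component $X \in \mathbb{B}(\Lambda_k)$ and let $Y^1 \in \mathcal{Y}^1(\Lambda_k)$ (resp. $Y^n \in \mathcal{Y}^n(\Lambda_k)$) be the Young wall corresponding to $X$ under the crystal isomorphism $\mathcal{Y}^1(\Lambda_k) \cong \mathbb{B}(\Lambda_k)$ (resp. $\mathcal{Y}^n(\Lambda_k) \cong \mathbb{B}(\Lambda_k)$) recalled in the previous section. By \eqref{Eqn Conormal bundle of G-orbit of Y1} (resp. \eqref{Eqn Conormal bundle of G-orbit of Yn}), the point $x(Y^1)$ (resp. $\overline{x}(Y^n)$) lies in the $G(\alpha)$-orbit whose conormal-bundle closure is $\iota_{\Lambda_k}(X)$. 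By Lemma \ref{Lem kernels of x and overline x}, the isomorphism type of $\ker x^t$ depends only on the orbit of $\pi_\Omega(\chi)$ over the generic locus, so for a generic $\chi = x + \overline{x} \in \iota_{\Lambda_k}(X)$ I may compute $\dim(\ker x^{t})$ using the explicit representative $x = x(Y^1)$ (resp. $\overline{x} = \overline{x}(Y^n)$).

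The key computation is then purely combinatorial. From \eqref{Eqn ker of Y1} we read off
\begin{align*}
\ker(x(Y^1))^t = \Span_{\C}\{ v^{\Color(\mathfrak{b}_{ij})}_{o(\mathfrak{b}_{ij})} \mid \mathfrak{b}_{ij} \in Y^1,\ j < t \},
\end{align*}
so $\dim(\ker x^t)$ equals the number of blocks of $Y^1$ lying in columns $0, 1, \ldots, t-1$, and therefore $\dim(\ker x^{t+1}/\ker x^{t})$ is exactly the number of blocks in the $t$-th column $y_t$ of $Y^1$, i.e. $\Ht(\wt(y_t))$. The same reasoning applied to \eqref{Eqn ker of Yn} gives $\dim(\ker \overline{x}^{t+1}/\ker \overline{x}^{t}) = \Ht(\wt(\overline{y}_t))$. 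Thus for a generic point the quantities appearing in the statement coincide with the column heights of the associated Young wall.

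It then remains to match these column heights against the explicit path formulas from the previous section. Substituting $\dim(\ker x^{i+1}/\ker x^{i}) = \Ht(\wt(y_i))$ into the congruence of part (a) recovers precisely $a_i \equiv \Ht(\wt(y_i)) - i + k \pmod{n+1}$, which is the formula \eqref{Eqn iso 1 YWs to paths} describing the inverse of the crystal isomorphism $\mathbf{Y}^1_k$. Since $X \mapsto Y^1 \mapsto \mathbf{p}^1$ is the composite $(\mathbf{Y}^1_k)^{-1} \circ (\text{iso } \mathbb{B}(\Lambda_k) \cong \mathcal{Y}^1(\Lambda_k))$ and this composite is the unique crystal isomorphism $\mathbf{p}^1_k$ onto $\mathcal{P}^1(\Lambda_k)$ by uniqueness in Theorems \ref{Thm crystal iso of path realization} and \ref{Thm saito geometric crystal}, part (a) follows; part (b) is identical using \eqref{Eqn iso n YWs to paths}. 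The main obstacle I anticipate is the genericity bookkeeping: I must be careful that the open set $U$ of Lemma \ref{Lem kernels of x and overline x} genuinely meets (indeed contains a dense subset computing the right kernel dimensions at) the orbit of $x(Y^1)$, so that evaluating kernel dimensions at the distinguished representative is legitimate. This is where Lemma \ref{Lem kernel is invariant}, ensuring the kernels are $\chi$-stable $I$-graded subspaces, and the conormal-bundle description are used together; once the dimensions are shown to be constant on the generic locus and computed via $x(Y^1)$, the remainder is the routine substitution above.
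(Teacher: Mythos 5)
Your proposal is correct and follows essentially the same route as the paper's own proof: reduce to the distinguished representative $x(Y^1)$ (resp.\ $\overline{x}(Y^n)$) in the generic orbit via Lemma \ref{Lem kernels of x and overline x} and \eqref{Eqn Conormal bundle of G-orbit of Y1}, read off $\dim(\ker x^{t+1}/\ker x^t)=\Ht(\wt(y_t))$ from \eqref{Eqn ker of Y1}, and conclude with the Young-wall-to-path dictionary \eqref{Eqn iso 1 YWs to paths}. The genericity point you flag is exactly the step the paper handles by producing some $\chi=x+\overline{x}\in U$ with $x=g\,x(Y^1)\,g^{-1}$, so no gap remains.
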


\begin{proof}
Let $U$ be an open subset of $\iota_{\Lambda_k}(X)$ as in Lemma \ref{Lem kernels of x and overline x}. By Lemma \ref{Lem kernels of x and overline x}, it suffices to show that (a) and (b) hold for some $\chi= x + \overline{x} \in U $.

Let $Y^1$ be the Young wall in $\mathcal{Y}^1(\Lambda_k)$ corresponding to $X$ under the crystal isomorphism
$ \mathcal{Y}^1(\Lambda_k) \cong \mathbb{B}(\Lambda_k), $
and $\mathfrak{b}_{ij}$ the $i$-th block from bottom in the $j$-th column of $Y^1$. By $\eqref{Eqn Conormal bundle of G-orbit of Y1} $, there exists $\chi= x + \overline{x} \in U $ such that
$$ x = g x(Y^1) g^{-1}  $$
for some $g \in G(\alpha)$. Then, by the equation $\eqref{Eqn ker of Y1}$, for $t \in \Z_{\ge 0}$, we have
\begin{align*}
 \ker x^t= \ker (g x(Y^1) g^{-1})^t = g (\ker (x(Y^1)^t)) = g \left(\Span_{\C} \{  v^{ \Color(\mathfrak{b}_{ij}) }_{ \mathfrak{o}(\mathfrak{b}_{ij}) } |\
\mathfrak{b}_{ij} \in Y^1,\ j < t \} \right).
\end{align*}
Let $y_t$ be the $t$-th column of $Y^1$ for $t \in \Z_{\ge0}$. Note that $y_t = \{ \mathfrak{b}_{it} \in Y^1|\  i \ge 1 \}$. Then, we have
\begin{align} \label{Eqn wt of y}
\wt(y_t) &= \sum_{ \mathfrak{b}_{it} \in y_t} \alpha_{\Color(\mathfrak{b}_{it} )} \nonumber   \\
&= \underline{\dim} \left( \Span_{\C} \{ v^{\Color(\mathfrak{b}_{it})}_{o(\mathfrak{b}_{it})} |\ \mathfrak{b}_{it} \in  y_t \} \right)  \nonumber \\
&= \underline{\dim} \left( \Span_{\C}\{ v^{\Color(\mathfrak{b}_{ij})}_{o(\mathfrak{b}_{ij})} |\
\mathfrak{b}_{ij} \in Y^1,\ j < t+1 \} \right)- \underline{\dim} \left( \Span_{\C}\{ v^{\Color(\mathfrak{b}_{ij})}_{o(\mathfrak{b}_{ij})} |\
\mathfrak{b}_{ij} \in Y^1,\ j < t \} \right) \nonumber   \\
&= \underline{\dim}(\ker(x(Y^1) )^{t+1}) - \underline{\dim}(\ker(x(Y^1) )^t) \nonumber  \\
&= \underline{\dim}(\ker x^{t+1}) - \underline{\dim}(\ker x^t) \nonumber \\
&= \underline{\dim}(\ker x^{t+1}/\ker x^t),
\end{align}
which implies that the height of $y_t$ is
$$ \Ht(\wt(y_t)) =  \dim ( \ker x^{t+1} / \ker x^t ).$$
Consequently, the assertion (a) follows from $\eqref{Eqn crystal iso from paths P1 to Young walls Y1}$ and $\eqref{Eqn iso 1 YWs to paths}$.

The remaining assertion (b) can be proved in the same manner.
\end{proof}

%We may assume that $k=0$. Let $U$ be an open subset of $\iota_{\Lambda_0}(X)$ as in Lemma \ref{Lem kernels of x and %overline x}. By Lemma \ref{Lem kernels of x and overline x}, it suffices to show that (a) and (b) hold for some $\chi= %x + \overline{x} \in U $.

%Let $Y^1 = \{ \mathfrak{b}_{ij} \}_{i,j \ge 0 }$ be the Young wall in $ \mathcal{Y}^1(\Lambda_0)$ corresponding to $X$ %and consider an element $ g x(Y^1) g^{-1} \in \pi_{\Omega}(U)$ for some $g\in G(\alpha)$.
%Hence the weight of the $t$-th column $y_t$ of $Y^1$ is
%\begin{align} \label{Eqn ker x equals to 1st column}
%\wt(y_t) = \underline{\dim}(\ker x(Y^1)^{t+1}) - \underline{\dim}( \ker x(Y^1)^t),
%\end{align}
%which implies that the number of blocks in $y_t$ is
%$$ \dim (W_{t+1} / W_t) = {\dim}(\ker x(Y^1)^{t+1}) - {\dim}( \ker x(Y^1)^t).$$
%Consequently, the assertion (a) follows from the crystal isomorphism $\eqref{Eqn crystal iso from paths to Young %walls}$ between $\mathcal{P}^1(\Lambda_0)$ and $\mathcal{Y}^1(\Lambda_0)$.

%The remaining assertion (b) can be proved in the same manner.
%\end{proof}

Combining the crystal isomorphisms $\eqref{Eqn crystal iso from paths P1 to Young walls Y1}$ and $\eqref{Eqn crystal iso from paths Pn to Young walls Yn}$:
\begin{align*}
\mathbf{Y}^1_k:\mathcal{P}^1(\Lambda_k) \longrightarrow \mathcal{Y}^1(\Lambda_k) \quad \text{ and } \quad
\mathbf{Y}^n_k:\mathcal{P}^n(\Lambda_k) \longrightarrow \mathcal{Y}^n(\Lambda_k)
\end{align*}
with $\eqref{Eqn Conormal bundle of G-orbit of Y1}$ and $\eqref{Eqn
Conormal bundle of G-orbit of Yn}$, we have the following
proposition, which, together with Theorem \ref{Thm paths of B1 and
Bn}, yields an explicit 1-1 correspondence between
$\mathbb{B}(\Lambda_k)$ and $\mathcal{P}^1(\Lambda_k)$ (resp.
$\mathcal{P}^n(\Lambda_k)$).

\begin{Prop}
Let
$$ \mathbf{q}_k^1:   \mathcal{P}^1(\Lambda_k) \longrightarrow \mathbb{B}(\Lambda_k)  \
\text{ {\rm (}resp. } \mathbf{q}_k^n:  \mathcal{P}^n(\Lambda_k)
\longrightarrow \mathbb{B}(\Lambda_k) \text{\rm )}$$ be the unique
crystal isomorphism given by Theorem \ref{Thm crystal iso of path
realization} and Theorem \ref{Thm saito geometric crystal}, and take
a $\Lambda_k$-path $\mathbf{p}^1 \in \mathcal{P}^1(\Lambda_k)$ {\rm
(}resp. $\mathbf{p}^n \in \mathcal{P}^n(\Lambda_k)${\rm )}. Let
$$ \alpha = \Lambda_k - \wt(\mathbf{p}^1)\ \text{ and }\ X_1 =  \mathbf{q}_k^1(\mathbf{p}^1)\quad \text{{\rm(}resp. } \beta = \Lambda_k - \wt(\mathbf{p}^n)\ \text{ and }\ X_n =  \mathbf{q}_k^n(\mathbf{p}^n) \text{\rm )}.$$
Then
\begin{enumerate}
\item[(a)]  $\iota_{\Lambda_k}(X_1)$ is the closure of the conormal bundle of the $G(\alpha)$-orbit of
$x(\mathbf{Y}^1_k(\mathbf{p}^1))$;
\item[(b)]  $\iota_{\Lambda_k}(X_n)$ is the closure of the conormal bundle of the $G(\beta)$-orbit of $\overline{x}(\mathbf{Y}^n_k(\mathbf{p}^n))$.
\end{enumerate}
\end{Prop}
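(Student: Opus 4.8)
The plan is to deduce both statements from the conormal bundle descriptions already established in $\eqref{Eqn Conormal bundle of G-orbit of Y1}$ and $\eqref{Eqn Conormal bundle of G-orbit of Yn}$, by identifying the irreducible components $X_1$ and $X_n$ with the components $X_{Y^1}$ and $X_{Y^n}$ attached to the corresponding Young walls. The argument is a diagram chase whose only real ingredient is the uniqueness of crystal isomorphisms among the various realizations of $B(\Lambda_k)$: since $\mathcal{P}^1(\Lambda_k)$, $\mathcal{Y}^1(\Lambda_k)$ and $\mathbb{B}(\Lambda_k)$ are each isomorphic to the connected highest weight crystal $B(\Lambda_k)$, which is generated from its unique highest weight element by the operators $\tilde f_i$, there is at most one crystal isomorphism between any two of them.

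For part (a), I would first note that the composition
$$ \mathcal{P}^1(\Lambda_k) \xrightarrow{\ \mathbf{Y}^1_k\ } \mathcal{Y}^1(\Lambda_k) \xrightarrow{\ \sim\ } \mathbb{B}(\Lambda_k), $$
where the first arrow is the crystal isomorphism $\eqref{Eqn crystal iso from paths P1 to Young walls Y1}$ and the second is the crystal isomorphism $Y^1 \mapsto X_{Y^1}$, is itself a crystal isomorphism $\mathcal{P}^1(\Lambda_k) \to \mathbb{B}(\Lambda_k)$. By the uniqueness principle above it must coincide with the map $\mathbf{q}^1_k$ produced from Theorem \ref{Thm crystal iso of path realization} and Theorem \ref{Thm saito geometric crystal}. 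Evaluating at $\mathbf{p}^1$ then gives $X_1 = \mathbf{q}^1_k(\mathbf{p}^1) = X_{Y^1}$ with $Y^1 = \mathbf{Y}^1_k(\mathbf{p}^1)$. Since $\mathbf{Y}^1_k$ is weight-preserving, the total content of $Y^1$ is $\alpha = \Lambda_k - \wt(\mathbf{p}^1)$, so $X_{Y^1} \in \Irr\Lambda(\Lambda_k, \alpha)$ and the description $\eqref{Eqn Conormal bundle of G-orbit of Y1}$ applies verbatim, yielding that $\iota_{\Lambda_k}(X_1)$ is the closure of the conormal bundle of the $G(\alpha)$-orbit of $x(\mathbf{Y}^1_k(\mathbf{p}^1))$.

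Part (b) is identical once $\mathbf{Y}^1_k$, $X_{Y^1}$, $\eqref{Eqn crystal iso from paths P1 to Young walls Y1}$ and $\eqref{Eqn Conormal bundle of G-orbit of Y1}$ are replaced by $\mathbf{Y}^n_k$, $X_{Y^n}$, $\eqref{Eqn crystal iso from paths Pn to Young walls Yn}$ and $\eqref{Eqn Conormal bundle of G-orbit of Yn}$, with $\beta = \Lambda_k - \wt(\mathbf{p}^n)$ playing the role of $\alpha$. I do not anticipate a genuine obstacle, as the geometric heart of the matter---expressing each relevant component of $\mathbb{B}(\Lambda_k)$ under $\iota_{\Lambda_k}$ as the closure of a conormal bundle---was already dispatched in $\eqref{Eqn Conormal bundle of G-orbit of Y1}$ and $\eqref{Eqn Conormal bundle of G-orbit of Yn}$. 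The one point demanding care is to confirm that each arrow in the chase is a crystal isomorphism sending highest weight element to highest weight element, so that the uniqueness principle legitimately forces the two routes from $\mathcal{P}^1(\Lambda_k)$ (resp. $\mathcal{P}^n(\Lambda_k)$) to $\mathbb{B}(\Lambda_k)$ to agree.
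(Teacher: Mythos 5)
Your proposal is correct and matches the paper's argument: the paper offers no separate proof, simply asserting that the proposition follows by combining the crystal isomorphisms $\mathbf{Y}^1_k$, $\mathbf{Y}^n_k$ with the conormal-bundle descriptions \eqref{Eqn Conormal bundle of G-orbit of Y1} and \eqref{Eqn Conormal bundle of G-orbit of Yn}, which is precisely the composition-plus-uniqueness diagram chase you carry out. Your explicit appeal to the uniqueness of crystal isomorphisms out of the connected highest weight crystal $B(\Lambda_k)$ is the correct justification for identifying $\mathbf{q}^1_k$ (resp. $\mathbf{q}^n_k$) with the composite map, so $X_1 = X_{\mathbf{Y}^1_k(\mathbf{p}^1)}$ (resp. $X_n = X_{\mathbf{Y}^n_k(\mathbf{p}^n)}$) and the stated conormal-bundle descriptions apply directly.
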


Recall the fundamental isomorphism theorem of perfect crystals
$\eqref{Eqn fundamental thm of perfect crystals}$. From Theorem
\ref{Thm saito geometric crystal}, we have the following crystal
isomorphisms:
\begin{align*}
\psi_k^1: \mathbb{B}(\Lambda_k) \overset {\sim} \longrightarrow  \mathbb{B}(\Lambda_{k-1})\otimes B^1, \\
\psi_k^n: \mathbb{B}(\Lambda_k) \overset {\sim} \longrightarrow  \mathbb{B}(\Lambda_{k+1})\otimes B^n \\
\end{align*}
for $0 \le k \le n$. We would like to give a geometric interpretation to the crystal isomorphisms $\psi_i^1,\ \psi_i^n $ in terms of quiver varieties. To do that, we need a couple of lemmas.

Let $V$ be an $I$-graded vector space and $\chi$ an element of
$\Hom(V,V)$. If $W$ is a $\chi$-invariant $I$-graded subspace of
$V$, then $\chi$ can be viewed as an element in $\Hom(V/W,V/W)$
(resp. $\Hom(W,W)$), which is denoted by $\chi|_{V/W}$ (resp. $\chi
|_W$).

\begin{Lem} \label{Lem existance of the open set}
Let $x \in \bigoplus_{i\in I} \Hom(V_{i-1},V_i)$ for an $I$-graded vector space $V:= \bigoplus_{i\in I} V_i$, and set
$$ W := \ker x \ \text{ and }\ y := x |_{V/W}. $$
Take an element
$$ \overline{y} \in \bigoplus_{i\in I} \Hom(V_{i}/W_{i}, V_{i-1}/W_{i-1}) \quad \text{ with } \quad [y,\overline{y}]=0 ,$$
where $W_i$ is the $i$-subspace of $W$ for $i\in I$. Then there exists an element
$$\overline{x} \in \bigoplus_{i\in I} \Hom(V_{i}, V_{i-1}) $$
such that
$$ [x, \overline{x}]=0 \quad \text{ and } \quad \overline{x}|_{V/W} = \overline{y}. $$
\end{Lem}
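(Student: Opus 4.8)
The plan is to choose an $I$-graded complement of $W$ and solve for $\overline{x}$ block by block against this decomposition. Since $W=\ker x$, we have $W_i = \ker(x_{i+1}\colon V_i \to V_{i+1})$, so $x$ annihilates $W$, and the two requirements on $\overline{x}$ are that $W$ be $\overline{x}$-invariant and that the induced map on $V/W$ be $\overline{y}$. First I would fix, for each $i\in I$, a subspace $U_i\subseteq V_i$ with $V_i = W_i\oplus U_i$, and identify $V_i/W_i$ with $U_i$ via the projection. With respect to these decompositions, $x_i\colon V_{i-1}\to V_i$ takes the block form $\begin{pmatrix} 0 & a_i \\ 0 & y_i\end{pmatrix}$, where $a_i\colon U_{i-1}\to W_i$, and $y_i\colon U_{i-1}\to U_i$ is the given map $x|_{V/W}$; the first column vanishes because $x(W)=0$. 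I would then seek $\overline{x}$ in the form $\overline{x}_i = \begin{pmatrix} P_i & Q_i \\ 0 & \overline{y}_i\end{pmatrix}$: the vanishing lower-left block encodes $\overline{x}(W)\subseteq W$ (so that $\overline{x}|_{V/W}$ is defined), and the lower-right block $\overline{y}_i$ encodes $\overline{x}|_{V/W}=\overline{y}$. This leaves $P_i\colon W_i\to W_{i-1}$ and $Q_i\colon U_i\to W_{i-1}$ as the only free data.

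Next I would expand the commuting condition $[x,\overline{x}]=0$, which by \eqref{Eqn commuting property of chi} is equivalent to $x_i\overline{x}_i = \overline{x}_{i+1}x_{i+1}$ for all $i$, into its four blocks. A direct computation shows three of them hold automatically: the lower-right block reproduces exactly $y_i\overline{y}_i = \overline{y}_{i+1}y_{i+1}$, i.e. the hypothesis $[y,\overline{y}]=0$, while the two left-hand blocks are $0=0$. The only genuine constraint is the upper-right ($U_i\to W_i$) block,
\[ a_i\overline{y}_i = P_{i+1}a_{i+1} + Q_{i+1}y_{i+1}, \]
required for each $i\in I$. Since the unknown pair $P_{i+1},Q_{i+1}$ occurs in exactly this one equation, the cyclic system decouples completely, and it suffices to solve each equation independently.

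The observation that makes each equation solvable is that the map $(a_{i+1},y_{i+1})\colon U_i \to W_{i+1}\oplus U_{i+1}=V_{i+1}$ is precisely the restriction $x_{i+1}|_{U_i}$, and it is injective because $U_i\cap\ker x_{i+1}=U_i\cap W_i=0$. Writing the right-hand side as $g_{i+1}\circ(x_{i+1}|_{U_i})$ with $g_{i+1}:=(P_{i+1}\mid Q_{i+1})\colon V_{i+1}\to W_i$, solving the equation amounts to extending the well-defined map $a_i\overline{y}_i\circ(x_{i+1}|_{U_i})^{-1}$ from $\im(x_{i+1}|_{U_i})$ to all of $V_{i+1}$; injectivity guarantees an extension exists (extend by zero on a chosen complement), and reading off its restrictions to $W_{i+1}$ and $U_{i+1}$ yields $P_{i+1}$ and $Q_{i+1}$. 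Assembling these choices over all $i$ produces an $\overline{x}\in\bigoplus_{i\in I}\Hom(V_i,V_{i-1})$ with $[x,\overline{x}]=0$ and $\overline{x}|_{V/W}=\overline{y}$, as desired. The one subtle step is this injectivity: the naive impulse is to invert $y_{i+1}$ alone, which need not be injective, and the fix is to use the full restriction $x_{i+1}|_{U_i}$, whose injectivity is exactly the hypothesis $W=\ker x$ read off on the complement $U_i$.
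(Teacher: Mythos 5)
Your proof is correct and follows essentially the same route as the paper: decompose $V=W\oplus U$, write $x$ and the sought $\overline{x}$ in block upper-triangular form, observe that only the $U\to W$ block of $[x,\overline{x}]=0$ is a genuine constraint, and solve it using the fact that $x$ is injective on a complement of $W=\ker x$ (the paper phrases this as the matrix $\left(\begin{smallmatrix} A \\ B \end{smallmatrix}\right)$ having full rank). Your vertex-by-vertex bookkeeping is a cleaner way to guarantee the solution respects the $I$-grading, a point the paper handles with a brief separate remark, but the argument is the same.
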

\begin{proof}
Let $r = \dim V - \dim W$ and $s = \dim W$. Take an ordered basis
for $W$ and extend it to be an ordered basis for $V$ so that the
matrix representations of $x, y$ and $\overline{y}$ are given as
follows:
\begin{align*}
x = \left(
      \begin{array}{cc}
        0 & A \\
        0 & B \\
      \end{array}
    \right),\ \  y = B \ \ \text{ and } \ \ \overline{y}=C
\end{align*}
for some $r \times r$ matrices $B$ and $ C$, and $s \times r$ matrix $A$. Note that $ [B,C]=0$. Since
the matrix
$$ \left(
      \begin{array}{c}
        A  \\
        B  \\
      \end{array}
    \right) $$
has full rank, the following equation
\begin{align} \label{Eqn matrix equation}
 \left(
     \begin{array}{cc}
       X & Y \\
       0 & 0
     \end{array}
   \right) \cdot
\left(
      \begin{array}{cc}
       0 & A  \\
       0 & B  \\
      \end{array}
    \right)
=
\left(
      \begin{array}{cc}
       0 & AC   \\
       0 & 0  \\
      \end{array}
    \right)
\end{align}
has a solution. Since $x(V_i) \subset V_{i+1}$ and $ \tiny \left(
      \begin{array}{cc}
       0 & AC   \\
       0 & 0  \\
      \end{array}
    \right)$ maps $V_i$ to $V_i$,
we can choose an $s \times s$ matrix $X$ and an $s \times r$ matrix $Y$ such that
$ \tiny \left(
     \begin{array}{cc}
       X & Y \\
       0 & 0
     \end{array}
   \right) $ is a solution of the equation \eqref{Eqn matrix equation} and maps $V_i$ to $V_{i-1}$ for $i\in I$.
Let $$ \overline{x} = \left(
      \begin{array}{cc}
        X & Y \\
        0 & C \\
      \end{array}
    \right) .$$
By construction, we have
$$ \overline{x}(V_{i}) \subset V_{i-1}\ (i\in I), \quad \quad \overline{x}|_{V/W} = \overline{y},$$
and
$$ [x, \overline{x}] = x \overline{x} - \overline{x} x =
\left(
      \begin{array}{cc}
        0 & AC \\
        0 & BC \\
      \end{array}
    \right)
 -
\left(
      \begin{array}{cc}
        0 & XA+YB \\
        0 & CB \\
      \end{array}
    \right) = 0 .
 $$
\end{proof}

\begin{Lem} \label{Lem restriction of x}
Let $U$ be an open subset of $X_0 \in \Irr\Lambda(\alpha)$ as in
Lemma \ref{Lem kernels of x and overline x}. Set $$\beta =
\underline{\dim}(\ker x)\ (\text{resp. } \gamma =
\underline{\dim}(\ker \overline{x}))$$ for $\chi = x+\overline{x}
\in U$.
\begin{enumerate}
\item[(a)] There exists an irreducible component $X_0' \in \Irr \Lambda(\alpha - \beta)$ such that, for $\chi= x + \overline{x} \in U$,
\begin{align} \label{Eqn restriction of x}
 \phi \circ (\chi |_{V(\alpha)/ \ker x})\circ \phi^{-1} \in X_0',
\end{align}
where $\phi:V(\alpha)/\ker x \to V(\alpha - \beta)$ is an $I$-graded vector space isomorphism.
\item[(b)] There exists an irreducible component $X_0'' \in \Irr \Lambda(\alpha - \gamma)$ such that, for $\chi= x + \overline{x} \in U$,
\begin{align} \label{Eqn restriction of overlinex}
 \phi \circ (\chi |_{V(\alpha)/  \ker \overline{x}})\circ \phi^{-1} \in X_0'',
\end{align}
where $\phi:V(\alpha)/ \ker \overline{x} \to V(\alpha - \gamma)$ is an $I$-graded vector space isomorphism.
\end{enumerate}

\end{Lem}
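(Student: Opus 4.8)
The plan is to prove (a) and then obtain (b) by the symmetric argument, with $x,\Omega,\beta$ replaced by $\overline{x},\overline{\Omega},\gamma$. First I would fix $\chi=x+\overline{x}\in U$, set $W=\ker x$, and record that by Lemma \ref{Lem kernel is invariant} (b) the subspace $W$ is $\chi$-stable and $I$-graded, so that both $x$ and $\overline{x}$ preserve $W$ and $\chi|_{V(\alpha)/W}$ is defined. Transporting by any $I$-graded isomorphism $\phi:V(\alpha)/W\to V(\alpha-\beta)$ yields $\chi^{\flat}:=\phi\circ(\chi|_{V(\alpha)/W})\circ\phi^{-1}\in E(\alpha-\beta)$, and the first thing to verify is that $\chi^{\flat}\in\Lambda(\alpha-\beta)$. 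Nilpotency should be immediate, since each composite $\chi^{\flat}_{h_N}\cdots\chi^{\flat}_{h_1}$ is induced by $\chi_{h_N}\cdots\chi_{h_1}$, which vanishes for $N\gg0$; and the moment-map condition should follow from $\eqref{Eqn commuting property of chi}$, because $[x,\overline{x}]=0$ descends to $[x|_{V(\alpha)/W},\overline{x}|_{V(\alpha)/W}]=0$ and conjugation by $\phi$ preserves this. Neither check is more than routine.

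The real content is to show that all the points $\chi^{\flat}$, as $\chi$ ranges over $U$ and $\phi$ over all admissible isomorphisms, fall into a single irreducible component, and my approach is to exhibit them as the image of an irreducible variety under a morphism. Using Lemma \ref{Lem kernels of x and overline x} with $k=1$, the graded dimension $\underline{\dim}(\ker x)=\beta$ is constant on $U$, so $x$ has locally constant rank there and $\chi\mapsto\ker x$ defines a morphism from $U$ to the product of Grassmannians parametrizing the $I$-graded subspaces of $V(\alpha)$ of graded dimension $\beta$. I would then form the $I$-graded quotient bundle on $U$ with fibre $V(\alpha)/\ker x$ and pass to the bundle $\pi:\widetilde{Z}\to U$ of $I$-graded isomorphisms from its fibres onto $V(\alpha-\beta)$. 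This is a Zariski-locally trivial principal $G(\alpha-\beta)$-bundle over the irreducible base $U$, and since $G(\alpha-\beta)=\prod_{i\in I}\Aut(V_i(\alpha-\beta))$ is connected, the total space $\widetilde{Z}$ is irreducible.

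To finish, I would check that $(\chi,\phi)\mapsto\phi\circ(\chi|_{V(\alpha)/\ker x})\circ\phi^{-1}$ is a morphism $\Phi:\widetilde{Z}\to E(\alpha-\beta)$ whose image lies in $\Lambda(\alpha-\beta)$ by the first paragraph. Then $\overline{\Phi(\widetilde{Z})}$, being the closure of the image of an irreducible variety under a morphism, is contained in a unique irreducible component $X_0'\in\Irr\Lambda(\alpha-\beta)$. As every element $\phi\circ(\chi|_{V(\alpha)/\ker x})\circ\phi^{-1}$ with $\chi\in U$ belongs to $\Phi(\widetilde{Z})\subset X_0'$, this gives $\eqref{Eqn restriction of x}$, and $\eqref{Eqn restriction of overlinex}$ follows verbatim with the orientation $\overline{\Omega}$.

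I expect the main obstacle to be precisely the dependence of $\chi^{\flat}$ on the two data that vary with $\chi$, namely the subspace $\ker x$ and the identifying isomorphism $\phi$; a priori there is no single morphism out of $U$ to which one could appeal. Replacing $U$ by the bundle $\widetilde{Z}$ is the device that removes both choices at once while keeping the source irreducible, and the point that must not be overlooked is the connectedness of $G(\alpha-\beta)$, without which $\widetilde{Z}$ need not be irreducible and the argument would break down.
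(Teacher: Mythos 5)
Your argument is correct, but it reaches the conclusion by a genuinely different route than the paper. The paper's proof leans on the special structure of $U$: by its construction in Lemma \ref{Lem kernels of x and overline x}, $\pi_{\Omega}(U)$ is contained in a single $G(\alpha)$-orbit, so for any $\chi,\chi'\in U$ one has $x=gx'g^{-1}$; it follows that the $\Omega$-components of all the transported restrictions $\phi\circ(\chi|_{V(\alpha)/\ker x})\circ\phi^{-1}$ lie in one $G(\alpha-\beta)$-orbit, and Lusztig's parametrization of $\Irr\Lambda(\alpha-\beta)$ by closures of conormal bundles of orbits (Theorem \ref{Thm Lusztig conormal bundle}) then places all of these points in the component attached to that orbit. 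You never invoke the orbit classification: instead you package the two data that vary with $\chi$ (the subspace $\ker x$, which forms a subbundle over $U$ precisely because $\underline{\dim}(\ker x)$ is constant there, and the identification $\phi$) into the frame bundle $\widetilde Z$, whose irreducibility comes from connectedness of $G(\alpha-\beta)$ together with Zariski-local triviality, and you conclude because the image of an irreducible variety under a morphism is irreducible and hence lies in an irreducible component (``at least one'' is what you need here; ``unique'' is neither quite right nor necessary). Your route is more elementary and would work for an arbitrary quiver, at the cost of constructing $\widetilde Z$; the paper's is shorter because the single-orbit property of $\pi_{\Omega}(U)$ supplies the required irreducibility for free. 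Your preliminary checks --- that $\ker x$ is $\chi$-stable by Lemma \ref{Lem kernel is invariant}, and that nilpotency and the relation $[x,\overline{x}]=0$ descend to the quotient --- correctly fill in what the paper compresses into the single line ``Since $\chi\in\Lambda(\alpha)$, we have $\chi_{\phi}\in\Lambda(\alpha-\beta)$.''
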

\begin{proof}

Note that $\beta$ and $\gamma$ are well-defined by Lemma \ref{Lem kernels of x and overline x}. We first deal with the case (a). For an element $\chi = x + \overline{x} \in U$, let
$$\chi_{\phi} := \phi \circ (\chi|_{V(\alpha)/\ker x}) \circ \phi^{-1} \in \End(V(\alpha-\beta)),$$
where $\phi:V(\alpha)/ \ker x \to V(\alpha - \beta)$ is an $I$-graded vector space isomorphism.
Since $\chi \in \Lambda(\alpha)$, we have $$ \chi_{\phi} \in \Lambda(\alpha - \beta).$$
Take two elements $\chi = x+ \overline{x},\ \chi' = x'+ \overline{x}' \in U$, and choose two $I$-graded vector space isomorphisms $\phi:V(\alpha)/\ker x \to V(\alpha- \beta)$ and $\phi':V(\alpha)/\ker x' \to V(\alpha- \beta)$. From the properties of $U$ described in the proof of Lemma \ref{Lem kernels of x and overline x}, we have
$$ x = gx'g^{-1}  $$
for some $g \in G(\alpha)$, which yields that $\pi_{\Omega}(\chi_{\phi})$ and $\pi_{{\Omega}}(\chi'_{\phi'})$
are in the same $G(\alpha-\beta)$-orbit. Therefore, there exists an irreducible component $X_0' \in \Irr \Lambda(\alpha - \beta)$ such that
$$ \chi_\phi,\ {\chi'}_{\phi'}  \in X_0' .$$
Since $\chi, \chi'$ are arbitrary, our assertion follows.

The remaining case (b) can be proved in the same manner.
\end{proof}

\begin{Thm} \label{Thm fundamental thm B1 and Bn}
Let $X_0 = \iota_{\Lambda_k}(X)$ for an irreducible component $X \in \Irr \Lambda(\Lambda_k, \alpha)$. Set
$$d = \dim (\ker x)\ \text{ and }\  \beta = \underline{\dim}(\ker x) \ ( \text{resp. }
d' = \dim (\ker \overline{x})\ \text{ and }\  \gamma = \underline{\dim}(\ker \overline{x}))$$
for a generic point $\chi = x+\overline{x} \in X_0$.

\begin{enumerate}
\item [(a)] There exists a unique irreducible component $X' \in \Irr \Lambda(\Lambda_{k-1}, \alpha - \beta)$ satisfying the following conditions:
\begin{enumerate}
\item[(i)] there is an open subset $U \subset X_0$ such that, for $\chi= x+ \overline{x} \in U$,
$$ \phi \circ ( \chi|_{V(\alpha)/\ker x}) \circ \phi^{-1} \in \iota_{\Lambda_{k-1}}(X'), $$
where $\phi:V(\alpha)/\ker x \to V(\alpha - \beta)$ is an $I$-graded vector space isomorphism,
\item[(ii)] there is an open subset $U' \subset \iota_{\Lambda_{k-1}}(X')$ such that any element $\chi' \in U'$ can be written as
$$ \chi' = \phi \circ ( \chi|_{V(\alpha)/\ker x}) \circ \phi^{-1}, $$
for some $\chi = x + \overline{x} \in X_0$ and some $I$-graded vector space isomorphism $\phi:V(\alpha)/\ker x \to V(\alpha - \beta)$,
\item[(iii)] moreover, we have
 $$ \psi_k^1(X) = X' \otimes \mathbf{b}_a \quad \text{ and }\quad \wt(\mathbf{b}_a) = \Lambda_k - \Lambda_{k-1} - \cl(\beta) , $$
where $a \equiv d+k\ ( \text{mod}\ n+1)$.
\end{enumerate}

\item [(b)] There exists a unique irreducible component $X'' \in \Irr \Lambda(\Lambda_{k+1}, \alpha - \gamma)$ satisfying the following conditions:
\begin{enumerate}
\item[(i)] there is an open subset $U \subset X_0$ such that, for $\chi= x+ \overline{x} \in U$,
$$ \phi \circ ( \chi|_{V(\alpha)/\ker \overline{x}}) \circ \phi^{-1} \in \iota_{\Lambda_{k+1}}(X''), $$
where $\phi:V(\alpha)/\ker \overline{x} \to V(\alpha - \gamma)$ is an $I$-graded vector space isomorphism,
\item[(ii)] there is an open subset $U'' \subset \iota_{\Lambda_{k+1}}(X'') $ such that any element $\chi'' \in U''$ can be written as
$$ \chi'' = \phi \circ ( \chi|_{V(\alpha)/\ker \overline{x}}) \circ \phi^{-1}, $$
for some $\chi = x + \overline{x} \in X_0$ and some $I$-graded vector space isomorphism $\phi:V(\alpha)/\ker \overline{x} \to V(\alpha - \gamma)$,
\item[(iii)] moreover, we have
$$  \psi_k^n(X) = X'' \otimes \overline{\mathbf{b}}_b \quad \text{ and }\quad \wt(\overline{\mathbf{b}}_b) = \Lambda_k - \Lambda_{k+1} - \cl(\gamma ), $$
where  $b \equiv 1-d'+k\ ( \text{mod}\ n+1)$.
\end{enumerate}

\end{enumerate}

\end{Thm}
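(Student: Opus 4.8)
The plan is to prove (a) in full; part (b) follows by the symmetric argument with $\Omega$ replaced by $\overline{\Omega}$, $\mathcal{Y}^1(\Lambda_k)$ by $\mathcal{Y}^n(\Lambda_k)$, and \eqref{Eqn iso 1 YWs to paths} by \eqref{Eqn iso n YWs to paths}, which is what produces the shift $\Lambda_k\to\Lambda_{k+1}$ and the index $b\equiv 1-d'+k$. The geometric heart of (a) is that, for the generic point attached to a Young wall, passing to $V(\alpha)/\ker x$ amounts to deleting the rightmost (tallest) column. Concretely, let $Y^1\in\mathcal{Y}^1(\Lambda_k)$ correspond to $X$ under $\mathcal{Y}^1(\Lambda_k)\cong\mathbb{B}(\Lambda_k)$, so by \eqref{Eqn Conormal bundle of G-orbit of Y1} we may take a generic $\chi=x+\overline{x}\in X_0$ with $x=g\,x(Y^1)\,g^{-1}$. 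By \eqref{Eqn ker of Y1} with $t=1$, $\ker x$ is $G(\alpha)$-conjugate to the span of the block vectors of the $0$-th column $y_0$ of $Y^1$; hence $\beta=\underline{\dim}(\ker x)=\wt(y_0)$ and $d=\dim(\ker x)=\Ht(\wt(y_0))$. I would first record that the shifted wall $Y'^1:=(y_{j+1})_{j\ge 0}$ lies in $\mathcal{Y}^1(\Lambda_{k-1})$: the $(n+1)$-reduced condition is preserved under the shift, and the $(j+1)$-st column of $\mathsf{P}^1(\Lambda_k)$ and the $j$-th column of $\mathsf{P}^1(\Lambda_{k-1})$ carry the same colors (both have bottom color $\equiv k-1-j$), so $Y'^1$ is admissible.

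For existence and properties (i),(ii), I would set $X':=X_{Y'^1}\in\Irr\Lambda(\Lambda_{k-1},\alpha-\beta)$ and match it with the component furnished abstractly by Lemma \ref{Lem restriction of x}. Unwinding \eqref{Eqn Young Walls B1} shows that $x(Y^1)$ preserves each row $J_i$ and sends the column-$j$ block vector to the column-$(j-1)$ block vector; therefore on $V(\alpha)/\ker x$, after the relabeling $j\mapsto j-1$ and the color matching above, it becomes exactly $x(Y'^1)$. Consequently $\pi_\Omega(\chi|_{V/\ker x})$ is $G(\alpha-\beta)$-conjugate to $x(Y'^1)$, so by \eqref{Eqn Conormal bundle of G-orbit of Y1} the component $X_0'$ of Lemma \ref{Lem restriction of x} equals $\iota_{\Lambda_{k-1}}(X')$, giving (i). For (ii), given a generic $\chi'=y+\overline{y}\in\iota_{\Lambda_{k-1}}(X')$ with $y$ conjugate to $x(Y'^1)$, I would apply Lemma \ref{Lem existance of the open set} with $x=x(Y^1)$, $W=\ker x$, and the transport of $\overline{y}$, obtaining $\overline{x}$ on $V(\alpha)$ with $[x,\overline{x}]=0$ and $\overline{x}|_{V/\ker x}=\overline{y}$; then $\chi:=x(Y^1)+\overline{x}$ lies in the conormal bundle over the orbit of $x(Y^1)$, hence generically in $X_0$, and restricts to $\chi'$. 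Uniqueness of $X'$ is immediate, since $\iota_{\Lambda_{k-1}}$ is injective and $X_0'$ is uniquely determined by Lemma \ref{Lem restriction of x}.

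For (iii) I would pass to paths. As $\ker(x|_{V/\ker x})^t=\ker x^{t+1}/\ker x$, one has $\dim\!\big(\ker(x|_{V/\ker x})^{t+1}/\ker(x|_{V/\ker x})^t\big)=\dim(\ker x^{t+2}/\ker x^{t+1})$, so Theorem \ref{Thm paths of B1 and Bn}(a) shows the $\Lambda_{k-1}$-path of $X'$ is the shift $(\dots,\mathbf{b}_{a_2},\mathbf{b}_{a_1})$ of the $\Lambda_k$-path $(\dots,\mathbf{b}_{a_1},\mathbf{b}_{a_0})$ of $X$, where $a_0\equiv d+k$. Since the path realization is obtained by iterating the fundamental isomorphism \eqref{Eqn fundamental thm of perfect crystals}, the induced map $\mathcal{P}^1(\Lambda_k)\to\mathcal{P}^1(\Lambda_{k-1})\otimes B^1$ is $(\dots,p_1,p_0)\mapsto(\dots,p_1)\otimes p_0$; transporting $\psi_k^1$ through the crystal isomorphisms $\mathbb{B}(\Lambda_k)\cong\mathcal{P}^1(\Lambda_k)$ and $\mathbb{B}(\Lambda_{k-1})\cong\mathcal{P}^1(\Lambda_{k-1})$ of Theorem \ref{Thm paths of B1 and Bn} yields $\psi_k^1(X)=X'\otimes\mathbf{b}_a$ with $a\equiv d+k\pmod{n+1}$. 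The weight identity $\wt(\mathbf{b}_a)=\Lambda_k-\Lambda_{k-1}-\cl(\beta)$ then follows from additivity of weight under the tensor product together with $\wt(X)=\Lambda_k-\cl(\alpha)$ and $\wt(X')=\Lambda_{k-1}-\cl(\alpha-\beta)$.

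I expect the main obstacle to be the matching in (ii)--(iii): one must check that the lift $\overline{x}$ supplied by Lemma \ref{Lem existance of the open set} really yields a \emph{generic} point of $X_0$ (nilpotency, and membership in the dense conormal bundle), and, above all, that the geometrically defined $X'$ of (i),(ii) is the \emph{same} component occurring in $\psi_k^1(X)$. The latter is exactly the identification of path-splitting with the fundamental isomorphism \eqref{Eqn fundamental thm of perfect crystals}; once the kernel-filtration computation of the third paragraph is in place, it is forced by the fact that every map involved is a crystal isomorphism.
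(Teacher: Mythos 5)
Your proposal is correct and follows essentially the same route as the paper: both arguments take $Y'$ to be the Young wall obtained by deleting the $0$-th column of the wall $Y$ attached to $X$, set $X'=X_{Y'}$, obtain (i) from Lemma \ref{Lem restriction of x} via the identification of $x(Y)|_{V(\alpha)/\ker x(Y)}$ with $x(Y')$, obtain (ii) from Lemma \ref{Lem existance of the open set}, and deduce (iii) from the path description of Theorem \ref{Thm paths of B1 and Bn} together with \eqref{Eqn wt of y}. Your write-up merely makes explicit a few points the paper leaves implicit (admissibility of the shifted wall, the kernel-filtration shift, and the compatibility of path truncation with the fundamental isomorphism).
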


\begin{proof}
We first deal with the case (a) of the crystal isomorphism $\psi^1_k:\mathbb{B}(\Lambda_k) \to \mathbb{B}(\Lambda_{k-1}) \otimes B^1$. Let $Y$ be the Young wall in $\mathcal{Y}^1(\Lambda_k)$ corresponding to $X$ and $Y'$ the Young wall obtained by removing the $0$-th column from $Y$. Then $Y'$ can be viewed as an element in $\mathcal{Y}^1(\Lambda_{k-1})$. Take the irreducible component $X'$ in $ \mathbb{B}(\Lambda_{k-1})$ corresponding to $Y'$. By Theorem \ref{Thm paths of B1 and Bn} and $\eqref{Eqn wt of y}$, we have
$$X' \in \Irr\Lambda(\Lambda_{k-1}, \alpha - \beta)\quad \text{ and } \quad
 \psi_k^1(X) = X' \otimes \mathbf{b}_a ,$$
where $a \equiv d+k\ ( \text{mod}\ n+1)$ and $\wt(\mathbf{b}_a) = \Lambda_k - \Lambda_{k-1} - \cl(\beta)$.

Let $U$ be an open subset of $X_0$ as in Lemma \ref{Lem restriction of x}, and take an element $\chi = x + \overline{x} \in U$. Since $x(Y)|_{V(\alpha)/ \ker(x(Y))}$ is naturally identified with $x(Y')$ and $x$ is contained in the $G(\alpha)$-orbit of $x(Y)$, by Lemma \ref{Lem restriction of x}, we have
$$ \phi \circ(\chi|_{V(\alpha)/\ker x}) \circ \phi^{-1} \in \iota_{\Lambda_{k-1}}(X') $$
for an $I$-graded vector space isomorphism $\phi: V(\alpha)/\ker x \to V(\alpha-\beta)$.

Take an element $\chi' = x' + \overline{x}'$ in an open subset $U'$ of $\iota_{\Lambda_{k-1}}(X')$ given as in Lemma \ref{Lem kernels of x and overline x}. Then $x'$ can be written as
$$  x'  = g x(Y') g^{-1} $$
for some $g \in G(\alpha- \beta)$, which yields that there is $x$ in the $G(\alpha)$-orbit of $x(Y)$ such that
$$ \phi \circ (x|_{V(\alpha)/\ker x}) \circ \phi^{-1} = x'$$
for some $I$-graded vector space isomorphism $\phi: V(\alpha)/\ker x \to V(\alpha - \beta)$. The assertion (ii) follows from Lemma \ref{Lem existance of the open set}.

The remaining case (b), $\psi^n_k:\mathbb{B}(\Lambda_k) \to \mathbb{B}(\Lambda_{k+1}) \otimes B^n$, can be proved in the same manner.
\end{proof}

\vskip 3mm

\begin{Exa} \label{Exa B1 and Bn}
We use the same notations as in Example \ref{Exa the points from
Young walls}. Let $X_0 = \iota_{\Lambda_0}(X) \in \Irr
\Lambda(\alpha)$. By Theorem \ref{Thm Lusztig conormal bundle}, it
suffices to consider a generic point in the fiber
$\pi_{\overline{\Omega}}^{-1}(\overline{x}(Y^n)) \subset X_0$. By
\cite[Section 12.8, Proposition 15.5]{L91}, we have
\begin{align*}
\pi_{\overline{\Omega}}^{-1}(\overline{x}(Y^n)) &= \{ \chi \in X_0|\ \pi_{\overline{\Omega}}(\chi) = \overline{x}(Y^n) \} \\
&= \{ x + \overline{x}(Y^n) |\ x \in E_{\Omega}(\alpha),\ [x, \overline{x}(Y^n)]=0 \} \\
&= \{ x + \overline{x}(Y^n)\ |\ x = x(a_1,\ldots,a_{13}),\ a_1, \ldots, a_{13} \in \C \}.
\end{align*}
Here, $$x(a_1,\ldots,a_{13}) = \left(
                                        \begin{array}{cccc}
                                          0 & 0 & 0 & x_0 \\
                                          x_1 & 0 & 0 & 0 \\
                                          0 & x_2 & 0 & 0 \\
                                          0 & 0 & x_3 & 0 \\
                                        \end{array}
                                      \right)\in E_{\Omega}(\alpha), $$

\begin{align*}
 x_0 &= \left(
        \begin{array}{ccc}
          a_1 & a_2 & a_3  \\
          0 & 0 & 0  \\
          a_4 & 0 & a_5  \\
          a_6 & 0 & 0  \\
        \end{array}
      \right), \qquad \qquad \ \
x_1 = \left(
\begin{array}{cccc}
  0 & a_1 & a_2 & a_3 \\
  0 & a_4 & 0 & a_5 \\
  0 & a_6 & 0 & 0 \\
  0 & a_{10} & 0 & a_7 \\
\end{array}
\right) , \\
x_2 &= \left(
        \begin{array}{cccc}
          0 & a_2 & a_3 & 0 \\
          0 & 0 & a_5 & 0 \\
          a_6 & a_8 & a_{11} & a_9 \\
          0 & 0 & a_7 & 0 \\
        \end{array}
      \right), \qquad
x_3 = \left(
\begin{array}{cccc}
  0 & a_2 & 0 & 0 \\
  a_4 & a_{12} & a_5 & a_{13} \\
  a_6 & a_8 & 0 & a_9 \\
\end{array}
\right)
\end{align*}
for $a_1,\ldots, a_{13} \in \C$. Let
$$x := x(a_1,\ldots, a_{13}),\qquad  \overline{x} := \overline{x}(Y^n),$$
and consider $a_1, \ldots, a_{13}$ as indeterminates.
 Then we have
\begin{align*}
\dim(\ker x^k) = \left\{
                   \begin{array}{ll}
                     0 & \hbox{if }k=0,  \\
                     2+k & \hbox{if } 1 \le k \le 12, \\
                     15 & \hbox{otherwise},
                   \end{array}
                 \right.  \quad \text{and} \quad
\dim(\ker \overline{x}^k) = \left\{
                              \begin{array}{ll}
                                0 & \hbox{if } k=0, \\
                                4 & \hbox{if } k=1, \\
                                8 & \hbox{if } k=2,\\
                                11 & \hbox{if } k=3, \\
                                14 & \hbox{if } k=4,\\
                                15 & \hbox{otherwise}.
                              \end{array}
                            \right.
\end{align*}
Hence we obtain
\begin{align*}
\mathbf{p}^1_0(X) & = ( \ldots, \mathbf{b}_3, \mathbf{b}_1,\mathbf{b}_2, \mathbf{b}_3,
\mathbf{b}_0, \mathbf{b}_1,\mathbf{b}_2, \mathbf{b}_3,\mathbf{b}_0, \mathbf{b}_1,\mathbf{b}_2, \mathbf{b}_3,
\mathbf{b}_0, \mathbf{b}_3), \\
\mathbf{p}^n_0(X) & = (\ldots, \overline{\mathbf{b}}_3, \overline{\mathbf{b}}_2, \overline{\mathbf{b}}_0, \overline{\mathbf{b}}_1, \overline{\mathbf{b}}_0, \overline{\mathbf{b}}_2, \overline{\mathbf{b}}_1 ).
\end{align*}
\end{Exa}

\vskip 3em

\section{Quiver Varieties and Adjoint Crystals}

In this section, we will prove the main theorem of this paper,
Theorem \ref{Thm main thm}, which shows that there exists an
explicit crystal isomorphism between the geometric realization
$\mathbb{B}(\Lambda_0)$ and the path realization $\mathcal{P}^{\rm
ad}(\Lambda_0)$ of $B(\Lambda_0)$ arising from the adjoint crystal
$\adjoint$. By Theorem \ref{Thm crystal iso of path realization} and
Theorem \ref{Thm saito geometric crystal}, we have the crystal
isomorphism
$$\mathbf{p}^{\rm ad} : \mathbb{B}(\Lambda_0) \longrightarrow \mathcal{P}^{\rm ad}(\Lambda_0).$$

Let $\alpha \in Q^+$ and let $X$ be an irreducible component of
$\Irr \Lambda(\Lambda_0, \alpha)$. For a generic point $\chi = x +
\overline{x} \in \iota_{\Lambda_0}(X)$, we will give an explicit
description of the $\Lambda_0$-path $\mathbf{p}^{\rm ad}(X)$ in
terms of dimension vectors of $\ker (x\overline{x})^{k+1}/ \ker
(x\overline{x})^k $ for $k \ge0$.

Let $\alpha, \beta\in Q^+$ with $\beta \le \alpha$. Consider the
diagram given in \cite{L91}:
\begin{align} \label{Eqn the diagram of Lusztig quivers}
\Lambda(\alpha - \beta) \buildrel \pi \over \longleftarrow \Lambda(\alpha - \beta)\times \Lambda(\beta)
\buildrel p_1 \over \longleftarrow F' \buildrel p_2 \over \longrightarrow F'' \buildrel p_3 \over \longrightarrow
\Lambda(\alpha),
\end{align}
where $F''$ is the variety of all pairs $(\chi, W)$ such that
\begin{enumerate}
\item[(a)] $\chi \in \Lambda(\alpha)$,
\item[(b)]  $W$ is a $\chi$-stable subspace of $V(\alpha)$ with $\underline{\dim}W = \beta$,
 \end{enumerate}
and $F'$ is the variety of all quadruples $(\chi, W, f, g)$ such that
 \begin{enumerate}
 \item[(a)]  $(\chi, W) \in F''$,
 \item[(b)]  $f=(f_i)_{i \in I},g=(g_i)_{i \in I}$ give an exact sequence
$$ 0 \longrightarrow V_i(\beta) \buildrel f_i \over \longrightarrow V_i(\alpha) \buildrel g_i \over \longrightarrow V_i(\alpha - \beta) \longrightarrow 0 \quad (i\in I)$$
such that $\im f = W$.
\end{enumerate}
Then we have
$$p_1(\chi, W, f,g) = ( \tilde{g} \circ(\chi|_{V(\alpha)/W}) \circ \tilde{g}^{-1},\  f^{-1} \circ (\chi|_W) \circ f ),$$
where $\tilde{g}:V(\alpha)/W \to V(\alpha - \beta)$ is the $I$-graded vector space isomorphism induced by $g$,
$$p_2(\chi, W, f,g) = (\chi, W),\quad p_3(\chi, W) = \chi,$$
and $\pi$ is the natural first projection.
Note that $p_2$ is a $G(\alpha-\beta) \times G(\beta)$-principal bundle and an open map.

Let $U$ be an open subset of $X_0 \in \Irr\Lambda(\alpha)$ as in Lemma \ref{Lem kernels of x and overline x}, and $\beta = \underline{\dim}(\ker x)$ for $\chi = x + \overline{x} \in U$. Define the map $\imath:U \to F''$ by
$$\imath(\chi) = (\chi,\ \ker x ) $$
for $\chi = x + \overline{x} \in U$. Note that $p_3 \circ \imath = {\rm id}|_{U}$.
By Lemma \ref{Lem restriction of x}, there exists an irreducible component $X_0' \in \Irr\Lambda(\alpha - \beta)$ such that, for any $\chi = x + \overline{x} \in U$,
$$ \phi \circ (\chi|_{V(\alpha)/\ker x})\circ \phi^{-1} \in X_0',$$
where $\phi:V(\alpha)/\ker x \to V(\alpha - \beta) $ is an $I$-graded vector space isomorphism.
Given an open subset $U' \subset \Lambda(\alpha - \beta)$ with $U' \cap X_0' \ne \emptyset$, by Lemma \ref{Lem existance of the open set},
$$\tilde{U} := \imath^{-1} \circ p_2 \circ p_1^{-1} \circ \pi^{-1}(U') $$
is a nonempty open subset of $X_0$.
Therefore, given an open subset $U' \subset X_0'$, there exists an open subset $\tilde{U} \subset X_0$ such that,
for any element $\chi=x+\overline{x} \in \tilde{U}$,
$$ \phi \circ ( \chi|_{V(\alpha)/\ker x}) \circ \phi^{-1} \in U'$$
for some $I$-graded vector space isomorphism $\phi:V(\alpha)/\ker x \to V(\alpha - \beta)$.

In the same manner, let $ \gamma = \underline{\dim} (\ker \overline{x})$, and consider the diagram
\begin{align*}
\Lambda(\alpha - \gamma) \buildrel \pi \over \longleftarrow \Lambda(\alpha - \gamma)\times \Lambda(\gamma)
\buildrel p_1 \over \longleftarrow F' \buildrel p_2 \over \longrightarrow F'' \buildrel p_3 \over \longrightarrow
\Lambda(\alpha).
\end{align*}
Define the map $\overline{\imath}: U \to F''$ by $$\overline{
\imath }(\chi) = (\chi, \ker \overline{x})$$ for
$\chi=x+\overline{x} \in U$, and let $X_0''$ be an irreducible component as
in Lemma \ref{Lem restriction of x}. Then one can deduce that,
given an open subset $U' \subset X_0''$, there exists an open
subset $\tilde{U} \subset X_0$ such that, for any element
$\chi=x+\overline{x} \in \tilde{U}$,
$$ \phi \circ ( \chi|_{V(\alpha)/\ker \overline{x}}) \circ \phi^{-1} \in U'$$
for some $I$-graded vector space isomorphism $\phi:V(\alpha)/\ker \overline{x} \to V(\alpha - \beta)$.
Consequently, we have the following lemma.

\begin{Lem} \label{Lem pull-back an open set}
With the same notations as in Lemma \ref{Lem restriction of x}, we have the following.
\begin{enumerate}
\item[(a)] Given an open subset $U' \subset X_0'$, there exists an open subset $\tilde{U} \subset X_0$ such that, for any element  $\chi=x+\overline{x} \in \tilde{U}$,
\begin{align}\label{Eqn pull-back of x}
 \phi \circ ( \chi|_{V(\alpha)/\ker x}) \circ \phi^{-1} \in U'
\end{align}
for some $I$-graded vector space isomorphism $\phi:V(\alpha)/\ker x \to V(\alpha - \beta)$.
\item[(b)] Given an open subset $U'' \subset X_0''$, there exists an open subset $\tilde{U} \subset X_0$ such that, for any element  $\chi=x+\overline{x} \in \tilde{U}$,
\begin{align}\label{Eqn pull-back of overline x}
 \phi \circ ( \chi|_{V(\alpha)/ \ker \overline{x}}) \circ \phi^{-1} \in U''
\end{align}
for some $I$-graded vector space isomorphism $\phi:V(\alpha)/ \ker \overline{x} \to V(\alpha - \gamma)$.
\end{enumerate}
\end{Lem}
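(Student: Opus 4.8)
The plan is to read the conclusion directly off the construction $\tilde{U} := \imath^{-1}\bigl(p_2(p_1^{-1}(\pi^{-1}(U')))\bigr)$ already assembled in the discussion preceding the statement. First I would fix the open set $U\subset X_0$ of Lemma~\ref{Lem kernels of x and overline x}, on which $\underline{\dim}(\ker x)=\beta$ is constant and $\ker x$ is $\chi$-stable by Lemma~\ref{Lem kernel is invariant}; this is what makes $\imath(\chi)=(\chi,\ker x)$ a well-defined section of $p_3$ over $U$. Unwinding the formula for $p_1$ from the diagram \eqref{Eqn the diagram of Lusztig quivers}, one checks that $\chi=x+\overline{x}\in\tilde{U}$ holds exactly when some $I$-graded isomorphism carries $\chi|_{V(\alpha)/\ker x}$ into $U'$, which is precisely the assertion \eqref{Eqn pull-back of x}. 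So the entire content of (a) is that $\tilde{U}$ is a nonempty open subset of $X_0$.

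Openness I expect to be routine, handled one arrow at a time along the chain. Since $\pi$, $p_1$ and $\imath$ are morphisms of varieties, hence continuous, the sets $\pi^{-1}(U')$, then $p_1^{-1}(\pi^{-1}(U'))$, and finally $\imath^{-1}(\cdots)$ are open in their respective sources. The only nonformal input is that $p_2$ is an open map, which is guaranteed because it is a $G(\alpha-\beta)\times G(\beta)$-principal bundle. Composing these, $\tilde{U}$ is open in $U$, and hence in $X_0$.

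The hard part will be nonemptiness, and this is exactly the step that forces Lemma~\ref{Lem existance of the open set} into the argument. Consider the quotient assignment $q:\chi\mapsto \phi\circ(\chi|_{V(\alpha)/\ker x})\circ\phi^{-1}$, which by Lemma~\ref{Lem restriction of x} is defined on $U$ and lands in the fixed component $X_0'$; it suffices to show $q(U)$ is dense in $X_0'$, since then it meets the nonempty open set $U'$ and $\tilde{U}=q^{-1}(U')$ is nonempty. I would prove density by analysing $q$ on the two layers of $X_0=\overline{\mathcal{C}}_f$. On the underlying nilpotent $G(\alpha)$-orbit the map $x\mapsto x|_{V/\ker x}$ is surjective onto the orbit underlying $X_0'$, by $G$-equivariance together with the fact that any splitting of $V(\alpha)\to V(\alpha)/\ker x$ produces a preimage. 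On the conormal fibres, Lemma~\ref{Lem existance of the open set} says exactly that the restriction map $\overline{x}\mapsto\overline{x}|_{V/\ker x}$ sends the fibre over $x$ onto the fibre over $q(x)$: every $\overline{y}$ with $[y,\overline{y}]=0$ (where $y=x|_{V/\ker x}$) lifts to an $\overline{x}$ with $[x,\overline{x}]=0$. Thus $q$ is surjective on base orbits and on conormal fibres, so $q(U)$ is dense in $X_0'$ and meets $U'$, giving $\tilde{U}\ne\emptyset$.

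Part (b) I would prove by the identical argument with the orientation reversed: replace $x,\ \ker x,\ \beta,\ X_0',\ \imath$ by $\overline{x},\ \ker\overline{x},\ \gamma,\ X_0'',\ \overline{\imath}$, use the $\overline{\Omega}$-version of the diagram \eqref{Eqn the diagram of Lusztig quivers}, and invoke the orientation-reversed form of Lemma~\ref{Lem existance of the open set} for the fibrewise lifting. The only thing to watch is the bookkeeping of which orientation plays the role of the ``$x$-part'' in the restriction map, but no new idea is needed.
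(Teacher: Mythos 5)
Your proposal is correct and follows essentially the same route as the paper: the paper's ``proof'' is precisely the discussion preceding the lemma, which builds $\tilde{U} = \imath^{-1}\circ p_2\circ p_1^{-1}\circ \pi^{-1}(U')$ from the diagram \eqref{Eqn the diagram of Lusztig quivers}, gets openness from continuity of $\pi$, $p_1$, $\imath$ together with $p_2$ being an open principal-bundle map, and gets nonemptiness from Lemma \ref{Lem existance of the open set}, exactly as you do (your density-of-$q(U)$ phrasing is just a mild repackaging of that last step). Part (b) is likewise handled in the paper by the orientation-reversed diagram and the map $\overline{\imath}$, matching your plan.
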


Combining Lemma \ref{Lem pull-back an open set} with Lemma \ref{Lem restriction of x}, we have the following lemma.

\begin{Lem} \label{Lem kernels of overline-x x}
Let $\alpha \in Q^+$. For each $X_0 \in  \Irr \Lambda(\alpha)$, there exists an open subset $U \subset X_0$ such that
\begin{align} \label{Eqn x-stable of x overline x}
\ker(x \overline{x})^k \cong \ker(x' \overline{x}')^k\ \text{ and }\  \ker x(x \overline{x})^k \cong \ker x'(x' \overline{x}')^k
\end{align} \label{Eqn kernels of overline-x x}
for any $\chi = x+ \overline{x}, \chi' = x'+ \overline{x}' \in U$ and $k\in \Z_{\ge 0}$.
\end{Lem}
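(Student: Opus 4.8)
The plan is to reduce everything to the generic constancy of finitely many graded kernel dimensions on the irreducible variety $X_0$, which then follows from upper semicontinuity of rank. First I record the algebraic structure. By \eqref{Eqn commuting property of chi} every $\chi=x+\overline{x}\in X_0$ satisfies $x\overline{x}=\overline{x}x$, so, writing $T:=x\overline{x}=\overline{x}x$, the two families in question become $\ker T^k$ and $\ker xT^k$. Using $[x,\overline{x}]=0$ one verifies the chain
\[
0\subseteq\ker x\subseteq\ker T\subseteq\ker xT\subseteq\ker T^2\subseteq\ker xT^2\subseteq\cdots;
\]
for instance $Tv=0$ forces $xTv=0$, and $xT^kv=0$ forces $T^{k+1}v=\overline{x}x\,T^kv=\overline{x}(xT^kv)=0$. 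Hence all these subspaces are increasing and bounded by $V(\alpha)$, so they stabilize once $k\ge\dim V(\alpha)$ and only finitely many of them are distinct. Since $T$ preserves the $I$-grading and $x$ shifts it by one, each $\ker T^k$ and each $\ker xT^k$ is an $I$-graded subspace of $V(\alpha)$ (indeed $\ker T^k=\ker(x\overline{x})^k$ is $\chi$-stable by Lemma \ref{Lem kernel is invariant}). It therefore suffices to find a single open set on which every graded dimension $\dim(\ker T^k\cap V_i)$ and $\dim(\ker xT^k\cap V_i)$ is constant.

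For fixed $k$ and $i\in I$, the block $(x\overline{x})^k|_{V_i}$, respectively the degree-one map $x(x\overline{x})^k|_{V_i}$, has entries that are polynomials in the coordinates of $\chi$, so its rank is a lower semicontinuous function on $X_0$ and the corresponding graded kernel dimension is upper semicontinuous. On the irreducible variety $X_0$ such a function attains its minimal value on the dense open locus where some maximal minor of the block is nonzero. Intersecting these finitely many dense open subsets over all $i\in I$ and all $0\le k\le\dim V(\alpha)$, and intersecting further with the open set supplied by Lemma \ref{Lem kernels of x and overline x}, yields an open subset $U\subseteq X_0$ on which all of the graded dimensions above are simultaneously constant. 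For $\chi,\chi'\in U$ the subspaces $\ker(x\overline{x})^k$ and $\ker(x'\overline{x}')^k$ then have equal dimension vectors, hence are isomorphic as $I$-graded vector spaces, and likewise for $\ker x(x\overline{x})^k$ and $\ker x'(x'\overline{x}')^k$; this is the assertion.

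To connect this with the inductive framework of the section — and to locate the genuine difficulty — one can instead try to argue by induction on $\Ht(\alpha)$. Because $\chi$ is nilpotent, $x$ alone is nilpotent, so $\ker x\ne0$ whenever $\alpha\ne0$ and $\beta:=\underline{\dim}(\ker x)$ strictly lowers the height. By Lemma \ref{Lem restriction of x}(a) the conjugated quotient $\phi\circ(\chi|_{V(\alpha)/\ker x})\circ\phi^{-1}$ lies in a fixed component $X_0'\in\Irr\Lambda(\alpha-\beta)$, where the analogous kernels are generically constant by the inductive hypothesis, and Lemma \ref{Lem pull-back an open set}(a) pulls the corresponding open subset of $X_0'$ back to an open subset of $X_0$. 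The main obstacle is the final comparison: passing to $V(\alpha)/\ker x$ and using $qT=\overline{T}q$ for the induced operator $\overline{T}$, together with $\ker x\subseteq\ker T$, the snake lemma identifies $\underline{\dim}\ker T^k$ with $\beta+\underline{\dim}\ker\overline{T}^k-\underline{\dim}(\im T^k\cap\ker x)$, and the correction term $\underline{\dim}(\im T^k\cap\ker x)$ is not directly controlled by the inductive data. The semicontinuity argument of the previous paragraph is precisely what circumvents this: it makes each graded kernel dimension automatically constant on a dense open set, so the only facts that must be checked by hand are the $I$-gradedness of the subspaces and the finiteness of the family, both of which come from $[x,\overline{x}]=0$ and the displayed inclusion chain.
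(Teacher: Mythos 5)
Your argument is correct, but it proves the lemma by a genuinely different route from the paper. You observe that, since $[x,\overline{x}]=0$ on all of $\Lambda(\alpha)$, the subspaces $\ker(x\overline{x})^k$ and $\ker x(x\overline{x})^k$ are $I$-graded, stabilize for $k\ge\dim V(\alpha)$, and have graded dimensions governed by the ranks of the blocks $(x\overline{x})^k|_{V_i}$ and $x(x\overline{x})^k|_{V_i}$, whose entries are polynomial in the coordinates of $\chi$; lower semicontinuity of rank on the irreducible variety $X_0$ then makes each graded kernel dimension constant on a dense open locus, and intersecting the finitely many relevant loci (together with the open set of Lemma \ref{Lem kernels of x and overline x}) gives the desired $U$. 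This is a complete, self-contained and more elementary proof of the stated isomorphisms, since isomorphism of $I$-graded vector spaces is exactly equality of dimension vectors. The paper instead argues by induction on $\Ht(\alpha)$: it quotients by $\ker x$ and then by $\ker\overline{x}$ of the quotient, uses Lemma \ref{Lem restriction of x} and Lemma \ref{Lem pull-back an open set} to land in a fixed component $X_0''\in\Irr\Lambda(\alpha-\beta-\gamma)$, and computes $\underline{\dim}\ker(x\overline{x})=\beta+\gamma$. What the paper's longer route buys is a byproduct your argument does not supply: the open set it constructs comes equipped with the statement \eqref{Eqn restriction to V / ker x overline x} that $\phi\circ(\chi|_{V(\alpha)/\ker x\overline{x}})\circ\phi^{-1}$ lies in a prescribed open subset of a fixed component, and the proof of Theorem \ref{Thm main thm} cites precisely this equation in its induction. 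So if your proof were substituted for the paper's, that compatibility with quotients would have to be established separately (it still follows from Lemmas \ref{Lem restriction of x} and \ref{Lem pull-back an open set}, applied twice); as a proof of the lemma as stated, however, your semicontinuity argument is valid, and your closing paragraph correctly diagnoses why a naive induction through $\ker x$ alone runs into an uncontrolled correction term.
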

\begin{proof}
Since the case that $\Ht (\alpha) = 0$ is trivial, we may assume $\Ht (\alpha) > 0$.
Let $U_0$ be an open subset of $X_0$ as in Lemma \ref{Lem kernels of x and overline x}, and $\beta = \underline{\dim} (\ker x)$ for $\chi = x + \overline{x} \in U_0$. Take the irreducible component $X_0' \in \Irr \Lambda(\alpha-\beta)$ given in Lemma \ref{Lem restriction of x}, and choose an open subset $U_0'$ of $X_0'$ satisfying the conditions of Lemma \ref{Lem kernels of x and overline x}. Let $\gamma = \underline{\dim} (\ker \overline{x})$ for $\chi = x + \overline{x} \in U_0'$. By Lemma \ref{Lem pull-back an open set}, there exists an open subset $\tilde{U}_0 \subset X_0$ such that, for any element  $\chi=x+\overline{x} \in \tilde{U}_0$,
$$ \phi \circ ( \chi|_{V(\alpha)/\ker x}) \circ \phi^{-1} \in U_0'$$
for some $I$-graded vector space isomorphism $\phi:V(\alpha)/\ker x \to V(\alpha - \beta)$. Set
$ \hat{U}_0 = U_0 \cap \tilde{U}_0 .$ Then, for any $\chi = x + \overline{x} \in \hat{U}_0$, we have
\begin{align*}
\underline{\dim} \ker x \overline{x} &=  \underline{\dim} (\ker x) + \underline{\dim} (\ker x \overline{x}/\ker x) \\
&= \underline{\dim} (\ker x) + \underline{\dim}\{ v \in V(\alpha)/\ker x\ |\ \overline{x}|_{V(\alpha)/\ker x}(v) = 0 \} \\
&= \underline{\dim} (\ker x) + \underline{\dim} (\ker \overline{x}|_{V(\alpha)/\ker x}) \\
&= \beta + \gamma.
\end{align*}

Let us take the irreducible component $X_0'' \in \Irr \Lambda(\alpha - \beta - \gamma)$ associated with $X_0'$, which is given as in Lemma \ref{Lem restriction of x}. By the induction hypothesis, there exists an open subset $U'' \subset X_0''$ satisfying $\eqref{Eqn x-stable of x overline x}$. Applying Lemma \ref{Lem pull-back an open set} to $X_0'', X_0'$ and $X_0$, there exists an open subset $\tilde{U} \subset X_0$ such that, for any $\chi = x + \overline{x} \in \hat{U}$,
\begin{align}\label{Eqn restriction to V / ker x overline x}
 \phi \circ ( \chi|_{V(\alpha)/\ker x\overline{x}}) \circ \phi^{-1} \in U''
\end{align}
for some $I$-graded vector space isomorphism $\phi:V(\alpha)/\ker x\overline{x} \to V(\alpha - \beta - \gamma)$.
Let $U = \tilde{U} \cap \hat{U}_0$, then, by construction, $U$ holds the condition $\eqref{Eqn x-stable of x overline x}$.
\end{proof}

An element $\chi \in X_0$ in the open subset $U \subset X_0$
satisfying $\eqref{Eqn constant kernel - generic properties of X}$
and $\eqref{Eqn x-stable of x overline x}$ is called a {\it generic
point}. Note that, for $\chi = x + \overline{x} $ in an irreducible
component $ X_0 \in \Irr \Lambda(\alpha)$, since $[x,
\overline{x}]=0$, we have
$$ \dim( \ker (x|_{\ker (x\overline{x})^{k+1} / \ker (x\overline{x})^k})) = \dim(\ker x(x\overline{x})^k) - \dim(\ker (x\overline{x})^k) ,$$
where $ x|_{\ker (x\overline{x})^{k+1} / \ker (x\overline{x})^k}$ is the linear map in $\End(\ker (x\overline{x})^{k+1} / \ker (x\overline{x})^k)$ induced by $x$.
Thanks to Lemma \ref{Lem kernels of overline-x x}, one can talk about
\begin{align*}
& \underline{\dim} \ker (x\overline{x})^k, \quad \underline{\dim} \ker (x\overline{x})^{k+1} / \ker (x\overline{x})^k
\ \text{ and }\ \dim( \ker (x|_{\ker (x\overline{x})^{k+1} / \ker (x\overline{x})^k}))
\end{align*}
for a generic point $\chi = x + \overline{x} $ in an irreducible component $ X_0 \in \Irr \Lambda(\alpha)$ and $k \in \Z_{\ge 0}$.
%Here, a generic point $\chi \in X_0$ means a element in an open subset
%$U \subset X_0$ satisfying $\eqref{Eqn constant kernel - generic
%properties of X}$ and $\eqref{Eqn x-stable of x overline x}$.

Finally, we are ready to state the main theorem in this paper.

\begin{Thm} \label{Thm main thm}
Let
$$\mathbf{p}^{\rm ad}:\mathbb{B}(\Lambda_0) \longrightarrow \mathcal{P}^{\rm ad}(\Lambda_0)$$
be the unique crystal isomorphism given by Theorem \ref{Thm crystal
iso of path realization} and Theorem \ref{Thm saito geometric
crystal}, and take an irreducible component $X \in
\mathbb{B}(\Lambda_0)$. For a generic point $\chi = x + \overline{x}
\in \iota_{\Lambda_0}(X)$ and $k \in \Z_{\ge 0}$, let
\begin{align*}
\theta_k  &= \underline{\dim} ( \ker (x \overline{x})^{k+1} /\ker (x \overline{x})^k ), \quad  \\
c_k  &\equiv \dim(\ker (x|_{\ker (x \overline{x})^{k+1} /\ker (x \overline{x})^k }) )  \ ( \text{mod}\ n+1),
\end{align*}
where $0 \le c_k \le n $.
Then we have
\begin{align}\label{Eqn main theorem}
 \mathbf{p}^{\rm ad}(X) = (\ldots, p_k, \ldots, p_1, p_0),
\end{align}
where
$$ p_k = \left\{
           \begin{array}{ll}
             \mathsf{b}_{-\cl(\theta_k)} & \hbox{if $\cl(\theta_k) \ne 0$,} \\
             h_{c_k} & \hbox{if $\cl(\theta_k) = 0$ and $c_k \ne 0$,} \\
             \emptyset & \hbox{otherwise.}
           \end{array}
         \right.
 $$
\end{Thm}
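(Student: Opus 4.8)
\emph{The plan} is to factor the single $\adjoint$-step of the fundamental isomorphism into the two steps for $B^1$ and $B^n$ already analyzed in Theorem \ref{Thm fundamental thm B1 and Bn}, and then induct on $\Ht(\alpha)$. The first task is to establish the structural identity
$$\psi=(\mathrm{id}\otimes\mathfrak{p}^{\rm ad})\circ(\psi^n_n\otimes\mathrm{id})\circ\psi^1_0:\mathbb{B}(\Lambda_0)\longrightarrow\mathbb{B}(\Lambda_0)\otimes\adjoint,$$
where $\psi$ is the fundamental isomorphism \eqref{Eqn fundamental thm of perfect crystals} for $\adjoint$ transported to $\mathbb{B}(\Lambda_0)$ via Theorem \ref{Thm saito geometric crystal}. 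Both sides are crystal isomorphisms into $\mathbb{B}(\Lambda_0)\otimes\adjoint$ (using $\adjoint\cong B^n\otimes B^1$ and associativity of the tensor product), so by the uniqueness in \eqref{Eqn fundamental thm of perfect crystals} it suffices to check they agree on $u_{\Lambda_0}$. I would trace $u_{\Lambda_0}\mapsto u_{\Lambda_n}\otimes\mathbf{b}_{n+1}\mapsto u_{\Lambda_0}\otimes\overline{\mathbf{b}}_{n+1}\otimes\mathbf{b}_{n+1}$ and note that $\mathfrak{p}^{\rm ad}(\overline{\mathbf{b}}_{n+1}\otimes\mathbf{b}_{n+1})=\emptyset$ by \eqref{Eqn isomorphism of perfect crystals}, since its classical weight vanishes and its $B^1$-index is $n+1$. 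This identity shows that the rightmost factor $p_0$ of $\mathbf{p}^{\rm ad}(X)$ equals $\mathfrak{p}^{\rm ad}(\overline{\mathbf{b}}_{b_0}\otimes\mathbf{b}_{a_0})$, where $\mathbf{b}_{a_0}$ and $\overline{\mathbf{b}}_{b_0}$ are the factors peeled off by $\psi^1_0$ and $\psi^n_n$.

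Next I would compute $p_0$ explicitly for a generic $\chi=x+\overline{x}\in\iota_{\Lambda_0}(X)$. Applying Theorem \ref{Thm fundamental thm B1 and Bn}(a) with $k=0$ peels off $\mathbf{b}_{a_0}$ with $a_0\equiv\dim(\ker x)$ and replaces $\chi$ by $\chi|_{V(\alpha)/\ker x}=y+\overline{y}$, a generic point of a component in $\mathbb{B}(\Lambda_n)$. Since $[x,\overline{x}]=0$ by \eqref{Eqn commuting property of chi}, one has $\ker x\subseteq\ker(x\overline{x})$ and $\ker\overline{y}\cong\ker(x\overline{x})/\ker x$; Theorem \ref{Thm fundamental thm B1 and Bn}(b) with $k=n$ then peels off $\overline{\mathbf{b}}_{b_0}$ with $b_0\equiv1-\dim(\ker\overline{y})+n$ and leaves $\chi|_{V(\alpha)/\ker(x\overline{x})}$. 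Adding the two weights recorded in Theorem \ref{Thm fundamental thm B1 and Bn}(iii), the $\Lambda$-terms cancel and I obtain $\wt(\overline{\mathbf{b}}_{b_0}\otimes\mathbf{b}_{a_0})=-\cl(\underline{\dim}\ker x)-\cl(\underline{\dim}\ker\overline{y})=-\cl(\theta_0)$, while the $B^1$-index satisfies $a_0\equiv\dim(\ker(x|_{\ker(x\overline{x})}))\equiv c_0$ because $\ker x\subseteq\ker(x\overline{x})$. Feeding these into \eqref{Eqn isomorphism of perfect crystals} reproduces the three cases for $p_0$ in \eqref{Eqn main theorem}: $\mathsf{b}_{-\cl(\theta_0)}$ when $\cl(\theta_0)\ne0$, and $h_{c_0}$ or $\emptyset$ according as the $B^1$-index is $\ne n+1$ or $=n+1$ when $\cl(\theta_0)=0$.

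For the induction I would set $K_j=\ker(x\overline{x})^j$, a $\chi$-stable filtration by Lemma \ref{Lem kernel is invariant}, and observe that $z:=x|_{V/K_1}$ and $\overline{z}:=\overline{x}|_{V/K_1}$ satisfy $\ker(z\overline{z})^k=K_{k+1}/K_1$, so the $\theta$- and $c$-data of $\chi|_{V/K_1}$ are exactly $(\theta_{k+1})_{k\ge0}$ and $(c_{k+1})_{k\ge0}$. Since $x\overline{x}$ is nilpotent, $K_1\ne0$ whenever $\alpha\ne0$, so $\Ht$ strictly drops; the induction hypothesis then identifies $(\ldots,p_2,p_1)$ with the path of the reduced component, and the base case $\alpha=0$ gives the constant ground-state path. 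The step that will demand real care is genericity: I must guarantee that restricting a generic point of $\iota_{\Lambda_0}(X)$ to $V/K_1$ lands in the generic locus of the reduced component, and that $\dim\ker x$, $\dim\ker\overline{y}$, $\theta_k$ and $c_k$ are simultaneously constant on one open set. This is precisely what Lemmas \ref{Lem kernels of x and overline x}, \ref{Lem restriction of x}, \ref{Lem pull-back an open set} and \ref{Lem kernels of overline-x x} are designed to supply—Lemma \ref{Lem kernels of overline-x x} constructs the open set by pulling generic loci back through the restriction maps—so the principal work is to thread these open sets consistently through the two-step peeling.
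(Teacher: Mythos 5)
Your proposal follows essentially the same route as the paper's proof: factor the $\adjoint$-step of the fundamental isomorphism through $\psi^1_0$ and $\psi^n_n$ via $\mathfrak{p}^{\rm ad}:B^n\otimes B^1\to\adjoint$, identify $p_0$ from the weight computation $-\cl(\beta+\gamma)=-\cl(\theta_0)$ and the index $a_0\equiv\dim(\ker x)\equiv c_0$, and induct on $\Ht(\alpha)$ by restricting to $V/\ker(x\overline{x})$, with genericity handled by Lemmas \ref{Lem kernels of x and overline x}, \ref{Lem restriction of x}, \ref{Lem pull-back an open set} and \ref{Lem kernels of overline-x x}. The only cosmetic difference is that you verify the factorization identity explicitly on $u_{\Lambda_0}$, which the paper leaves implicit; the argument is correct.
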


\begin{proof}
Let $X_0 = \iota_{\Lambda_0}(X)$ with $\wt(X) = \Lambda_0 - \alpha$ for some $\alpha \in Q^+$. We will use induction on $\Ht(\alpha)$. Since the case that $\Ht(\alpha) = 0$ is trivial, we may assume $\alpha \ne 0$. Note that
\begin{align*}
\theta_k &= \underline{\dim} (\ker(x \overline{x})^{k+1}) - \underline{\dim} (\ker(x \overline{x})^{k}), \\
c_k &\equiv \dim(\ker x(x \overline{x})^{k}) -  \dim(\ker (x \overline{x})^{k})  \ ( \text{mod}\ n+1)
\end{align*}
for a generic point $\chi = x + \overline{x} \in X_0$ and $k \in \Z_{\ge0}$.

Let $\beta = \underline{\dim}(\ker x) $ for a generic point $\chi = x + \overline{x} \in X_0$, and choose the irreducible component $X_0' \in \Irr \Lambda(\alpha - \beta)$ associated with $X_0$ as in Lemma \ref{Lem restriction of x}. Similarly, let $ \gamma = \underline{\dim} (\ker \overline{x})$ for a generic point $\chi = x + \overline{x}\in X_0'$, and take the irreducible component $X_0'' \in \Irr \Lambda(\alpha - \beta - \gamma)$ associated with $X_0'$ as in Lemma \ref{Lem restriction of x}.
By Theorem \ref{Thm fundamental thm B1 and Bn}, we have
$$ \psi_{0}^1(X_0) = X_0' \otimes \mathbf{b}_a \quad \text{and} \quad \psi_{n}^n(X_0') = X_0'' \otimes \overline{\mathbf{b}}_b $$
for some $\mathbf{b}_a \in B^1,\ \overline{\mathbf{b}}_b \in B^n $.
From $\eqref{Eqn fundamental thm of perfect crystals}$ and Theorem \ref{Thm K-S geometric crystal}, we have
$$\psi_0^{\rm ad}: \mathbb{B}(\Lambda_0) \overset {\sim} \longrightarrow \mathbb{B}(\Lambda_{0})\otimes \adjoint.$$
Then, it follows from the crystal isomorphism $\eqref{Eqn isomorphism of perfect crystals}$ and Theorem \ref{Thm fundamental thm B1 and Bn} that
$$\psi_0^{\rm ad}(X_0) = X_0'' \otimes \mathfrak{p}^{\rm ad}( \overline{\mathbf{b}}_b \otimes \mathbf{b}_a ).$$
By the induction hypothesis, there is an open subset $U'' \subset X_0''$ satisfying $\eqref{Eqn main theorem}$. By Lemma \ref{Lem pull-back an open set} and $\eqref{Eqn restriction to V / ker x overline x}$, there is an open subset $\tilde{U} \subset X_0$ such that, for any $\chi = x + \overline{x} \in \tilde{U}$,
$$ \phi \circ ( \chi|_{V(\alpha)/\ker x\overline{x}}) \circ \phi^{-1} \in U''$$
for some isomorphism $\phi:V(\alpha)/\ker x\overline{x} \to V(\alpha - \beta - \gamma)$.

On the other hand, by Lemma \ref{Lem kernels of overline-x x}, there exists an open subset $\hat{U} \subset X_0$ satisfying $\eqref{Eqn x-stable of x overline x}$. Set
$$ U = \hat{U} \cap \tilde{U} $$
and choose an element $\chi = x + \overline{x} \in U$.
Suppose that $\wt(\overline{\mathbf{b}}_b \otimes \mathbf{b}_a ) \ne 0$. Then, by Theorem \ref{Thm fundamental thm B1 and Bn} and $\eqref{Eqn isomorphism of perfect crystals}$, since
\begin{align*}
 \wt(\overline{\mathbf{b}}_b \otimes \mathbf{b}_a ) &= \wt( \overline{\mathbf{b}}_b) + \wt(\mathbf{b}_a ) \\
&=\Lambda_n-\Lambda_0 - \cl(\gamma) + \Lambda_0-\Lambda_n  -\cl(\beta) \\
& = - \cl(\beta + \gamma) \\
&= -\cl(\underline{\dim}(\ker x \overline{x}))\\
&= -\cl(\theta_0),
\end{align*}
we obtain
$$\mathfrak{p}^{\rm ad}(\overline{\mathbf{b}}_b \otimes \mathbf{b}_a )
= \mathsf{b}_{\wt(\overline{\mathbf{b}}_b \otimes \mathbf{b}_a )} = \mathsf{b}_{-\cl(\theta_0)}.$$
Suppose $\wt(\overline{\mathbf{b}}_b \otimes \mathbf{b}_a ) = 0$ and $a \ne n+1$. Then, by Theorem \ref{Thm fundamental thm B1 and Bn} and $\eqref{Eqn isomorphism of perfect crystals}$, we have
\begin{align*}
\mathfrak{p}^{\rm ad}(\overline{\mathbf{b}}_b \otimes \mathbf{b}_a  ) = h_{a}
\end{align*}
and  $ a \equiv \dim x \ ( \text{mod}\ n+1)$, which implies that $a = c_0$. In the same manner, if $\wt(\overline{\mathbf{b}}_b \otimes \mathbf{b}_a ) = 0$ and $a = n+1$, we have
\begin{align*}
\mathfrak{p}^{\rm ad}(\overline{\mathbf{b}}_b \otimes \mathbf{b}_a  ) = \emptyset.
\end{align*}
Since, for an arbitrary isomorphism $\phi:V(\alpha)/\ker x\overline{x} \to V(\alpha - \beta - \gamma)$,
\begin{align*}
\underline{\dim}(\ker(x \overline{x})^{k+1}) &= \underline{\dim}(\ker(x \overline{x})^{k+1}/ \ker(x \overline{x})) + \underline{\dim}(\ker(x \overline{x})) \\
&= \underline{\dim}(\ker(x \overline{x})|_{V(\alpha)/ \ker(x \overline{x})})^k + \underline{\dim}(\ker(x \overline{x})) \\
&= \underline{\dim}\ker(\phi \circ (x \overline{x})|_{V(\alpha)/ \ker(x \overline{x})}\circ \phi^{-1})^k + \underline{\dim}(\ker(x \overline{x})) \\
&= \underline{\dim}\ker((\phi \circ x|_{V(\alpha)/ \ker(x \overline{x})} \circ \phi^{-1})( \phi \circ \overline{x}|_{V(\alpha)/ \ker(x \overline{x})}\circ \phi^{-1}))^k + \underline{\dim}(\ker(x \overline{x})),
\end{align*}
our assertion follows from a standard induction argument.
\end{proof}

The following corollary, which is an immediate consequence of
Theorem \ref{Thm fundamental thm B1 and Bn} and Theorem \ref{Thm
main thm}, can be regarded as a geometric interpretation of the
fundamental isomorphism theorem for perfect crystals
$$
\psi_0^{\rm ad}: \mathbb{B}(\Lambda_0) \overset {\sim} \longrightarrow \mathbb{B}(\Lambda_{0})\otimes \adjoint.
$$

\begin{Cor} \label{Cor main thm}
Let $X_0 = \iota_{\Lambda_0}(X)$ for some $X \in \Irr\Lambda(\Lambda_0, \alpha)$. For a generic point $\chi = x + \overline{x} \in X_0$, set
$$ \theta = \underline{\dim}(\ker(x \overline{x}))\  \text{ and }\  c = \dim (\ker x) .$$
Then there exists a unique irreducible component $X' \in \Irr  \Lambda(\Lambda_0, \alpha - \theta)$ satisfying the following conditions:
\begin{enumerate}
\item[(a)] there is an open subset $U \subset X_0$ such that, for $\chi = x + \overline{x} \in U$,
$$ \phi \circ (\chi|_{V(\alpha)/ \ker x \overline{x}}) \circ \phi^{-1} \in \iota_{\Lambda_0}(X'), $$
where $\phi: V(\alpha-\theta) \to V(\alpha)/\ker{x \overline{x}}$ is an $I$-graded vector space isomorphism,
\item[(b)] there is an open subset $U' \subset \iota_{\Lambda_0}(X')$ such that any element $\chi' \in U'$ can be written as
$$ \chi' = \phi \circ (\chi |_{V(\alpha)/\ker x \overline{x}})\circ \phi^{-1} $$
for some $\chi = x + \overline{x} \in X_0$ and some $I$-graded vector space isomorphism $\phi:V(\alpha)/\ker x \overline{x} \to V(\alpha - \theta)$,
\item[(c)] moreover, we have
$$ \psi_{0}^{\rm ad}(X) = X' \otimes p, $$
where
$$ p = \left\{
         \begin{array}{ll}
           \mathsf{b}_{- \cl(\theta)} & \hbox{if $\cl(\theta) \ne 0$,} \\
           h_{c} & \hbox{if $\cl(\theta) = 0$ and $c \ne 0$,} \\
           \emptyset & \hbox{otherwise.}
         \end{array}
       \right.
$$
\end{enumerate}
\end{Cor}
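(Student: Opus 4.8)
The plan is to obtain this corollary by iterating Theorem~\ref{Thm fundamental thm B1 and Bn} exactly once in each of its two forms, reproducing the single step of the inductive argument in the proof of Theorem~\ref{Thm main thm}. For a generic point $\chi = x + \overline{x} \in X_0$, first I set $\beta = \underline{\dim}(\ker x)$ and apply Theorem~\ref{Thm fundamental thm B1 and Bn}(a) at level $k = 0$; this yields a unique $X_0' \in \Irr\Lambda(\Lambda_n, \alpha - \beta)$ together with $\psi_0^1(X) = X_0' \otimes \mathbf{b}_a$, where $a \equiv \dim(\ker x) \pmod{n+1}$. Next, with $\gamma = \underline{\dim}(\ker \overline{x})$ for a generic point of $X_0'$, I apply Theorem~\ref{Thm fundamental thm B1 and Bn}(b) at level $k = n$ to $X_0'$, producing a unique $X_0'' \in \Irr\Lambda(\Lambda_0, \alpha - \beta - \gamma)$ with $\psi_n^n(X_0') = X_0'' \otimes \overline{\mathbf{b}}_b$. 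I then define $X' := X_0''$. Since the dimension count carried out in the proof of Lemma~\ref{Lem kernels of overline-x x} gives $\underline{\dim}(\ker x\overline{x}) = \beta + \gamma$, we have $\theta = \beta + \gamma$, so $X' \in \Irr\Lambda(\Lambda_0, \alpha - \theta)$; its uniqueness is inherited from the two uniqueness assertions of Theorem~\ref{Thm fundamental thm B1 and Bn}.

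For conditions (a) and (b) I would use the fact that the single restriction $\chi|_{V(\alpha)/\ker x\overline{x}}$ is the composite of the two restrictions already in play: passing to $V(\alpha)/\ker x$ gives a generic point of $X_0'$, and then passing to the further quotient by $\ker(\overline{x}|_{V(\alpha)/\ker x})$ gives a generic point of $X_0''$. This rests on the isomorphism $\ker(x\overline{x})/\ker x \cong \ker(\overline{x}|_{V(\alpha)/\ker x})$, which is exactly the identity behind $\theta = \beta + \gamma$. Condition (a) is then nothing but $\eqref{Eqn restriction to V / ker x overline x}$ from the proof of Lemma~\ref{Lem kernels of overline-x x}, while condition (b) follows by applying Lemma~\ref{Lem pull-back an open set} to the pairs $(X_0, X_0')$ and $(X_0', X_0'')$ in turn, combined with conditions (ii) of Theorem~\ref{Thm fundamental thm B1 and Bn}(a) and (b). One intersects the resulting open sets with an open set satisfying $\eqref{Eqn x-stable of x overline x}$ to obtain a single $U \subset X_0$ on which everything holds simultaneously.

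Condition (c) is the genuinely computational part. Combining $\eqref{Eqn fundamental thm of perfect crystals}$ with Theorem~\ref{Thm K-S geometric crystal} gives the geometric fundamental isomorphism $\psi_0^{\rm ad}: \mathbb{B}(\Lambda_0) \to \mathbb{B}(\Lambda_0) \otimes \adjoint$, and the crystal isomorphism $\eqref{Eqn isomorphism of perfect crystals}$ then forces $\psi_0^{\rm ad}(X) = X' \otimes \mathfrak{p}^{\rm ad}(\overline{\mathbf{b}}_b \otimes \mathbf{b}_a)$. Exactly as in the proof of Theorem~\ref{Thm main thm}, the weights add to
$$ \wt(\overline{\mathbf{b}}_b \otimes \mathbf{b}_a) = (\Lambda_n - \Lambda_0 - \cl(\gamma)) + (\Lambda_0 - \Lambda_n - \cl(\beta)) = -\cl(\beta + \gamma) = -\cl(\theta), $$
while the color satisfies $a \equiv \dim(\ker x) = c \pmod{n+1}$. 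Feeding these into $\eqref{Eqn isomorphism of perfect crystals}$ gives the three prescribed cases: $\cl(\theta) \ne 0$ yields $p = \mathsf{b}_{-\cl(\theta)}$; $\cl(\theta) = 0$ with $c \not\equiv 0$ (equivalently $a \ne n+1$) yields $p = h_{c}$; and otherwise $p = \emptyset$.

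I expect the main difficulty to be organizational rather than substantive: one must verify that the several open subsets of $X_0$ --- where the kernel dimensions are locally constant, where the two-step restriction lands in the prescribed components $X_0'$ and $X_0''$, and where $\eqref{Eqn x-stable of x overline x}$ holds --- admit a common nonempty open refinement on which assertions (a)--(c) hold at once, and one must track the color index through the reductions modulo $n+1$ so that the degenerate value $a = n+1$ corresponds precisely to $c \equiv 0$. Both points are already under control via Lemma~\ref{Lem kernels of overline-x x} and the proof of Theorem~\ref{Thm main thm}, so the corollary should follow with essentially no new input.
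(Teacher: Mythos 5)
Your proposal is correct and takes essentially the same approach as the paper, which presents the corollary as an immediate consequence of Theorem \ref{Thm fundamental thm B1 and Bn} and Theorem \ref{Thm main thm}: your argument is exactly the single $k=0$ step of the induction in the proof of Theorem \ref{Thm main thm}, composing $\psi_0^1$ and $\psi_n^n$, identifying $\theta=\beta+\gamma$ via $\ker(x\overline{x})/\ker x\cong\ker(\overline{x}|_{V(\alpha)/\ker x})$, and translating $\overline{\mathbf{b}}_b\otimes\mathbf{b}_a$ through $\mathfrak{p}^{\rm ad}$. The open-set bookkeeping you flag is handled by the same tools the paper uses (Lemma \ref{Lem pull-back an open set}, Lemma \ref{Lem kernels of overline-x x}, and the diagram \eqref{Eqn the diagram of Lusztig quivers}), so no new input is needed.
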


\vskip 5em

\begin{Exa} We use the same notations as in Example \ref{Exa B1 and Bn}. Let
$W_i = \ker (x \overline{x})^i$
for $i \in \Z_{\ge0}$. Then we have
$$ \underline{\dim} (W_i) = \left\{
                                      \begin{array}{ll}
                                        0 & \hbox{if } i=0, \\
                                        \alpha_0 + 2\alpha_1 + 2\alpha_2 + \alpha_3 & \hbox{if } i=1, \\
                                        2\alpha_0 + 3\alpha_1 + 3\alpha_2 + 2\alpha_3 & \hbox{if } i=2, \\
                                        3\alpha_0 + 4\alpha_1 + 4\alpha_2 + 3\alpha_3 & \hbox{if } i=3, \\
                                        4\alpha_0 + 4\alpha_1 + 4\alpha_2 + 3\alpha_3 & \hbox{otherwise,}
                                      \end{array}
                                    \right.
   $$
and
$$ \dim(\ker(x|_{W_{i+1}/W_i}) ) = \dim( \ker x (x\overline{x})^i) - \dim(\ker (x\overline{x})^i) =
\left\{
  \begin{array}{ll}
    3 & \hbox{if } i=0, \\
    1 & \hbox{if }  i=1, \\
    1 & \hbox{if }  i=2, \\
    0 & \hbox{otherwise.}
  \end{array}
\right.
$$
By Theorem \ref{Thm main thm}, we have
$$ \mathbf{p}^{\rm ad}(X) = (\ldots, \emptyset, \emptyset, \mathsf{b}_{\alpha_1 + \alpha_2+\alpha_3}, h_1, h_1,
\mathsf{b}_{-\alpha_1 - \alpha_2}).$$
\end{Exa}

\vskip 5em

\bibliographystyle{amsalpha}

\begin{thebibliography}{100}




\bibitem{BFKL06} G. Benkart, I. Frenkel, S.-J. Kang, and H. Lee,
{\it Level 1 perfect crystals and path realizations of basic representations at $q = 0$},
Int. Math. Res. Not. 2006, Art. ID 10312, 1--28.


\bibitem{DJKMO89} E. Date, M. Jimbo, A. Kuniba, T. Miwa, and M. Okado,
{\it Paths, Maya diagrams and representations of $\widehat{sl}(r,\C)$},
Adv. Stud. Pure Math. {\bf19} (1989), 149--191.



\bibitem{FS03} I. B. Frenkel and A. Savage,
{\it Bases of representations of type $A$ affine Lie algebras via quiver varieties and statistical mechanics},
Int. Math. Res. Not. 2003, no. 28, 1521--1547.




\bibitem{HK02} J. Hong, S.-J. Kang,
{\it Introduction to Quantum Groups and Crystal Bases},
Graduate Studies in Mathematics, {\bf42}. American Mathematical Society, Providence, RI, 2002.





\bibitem{JMMO91} M. Jimbo, K. C. Misra, T. Miwa, and M. Okado,
{\it Combinatorics of representations of $U_q(\hat{sl}(n))$ at $q=0$},
Comm. Math. Phys. {\bf136} (1991), no. 3, 543--566.




\bibitem{KANG00} S.-J. Kang,
{\it Crystal bases for quantum affine algebras and combinatorics of Young walls},
Proc. London Math. Soc. (3) {\bf86} (2003), no. 1, 29--69.



\bibitem{KKMMNN91} S.-J. Kang, M. Kashiwara, K. C. Misra, T. Miwa, T. Nakashima, and A. Nakayashiki,
{\it Affine Crystals and Vertex Models},
Infinite Analysis, Part A,B(Kyoto, 1991), Adv. Ser. Math. Phys. {\bf16}, World Scientific Publishing Co. Inc., River Edge, NJ, 1992, 449--484.

\bibitem{KKMMNN92} \bysame,
{\it Perfect crystals of quantum affine Lie algebras},
Duke Math. J. {\bf68} (1992), no. 3, 499--607.


\bibitem{K90} M. Kashiwara,
{\it Crystalizing the $q$-analogue of universal enveloping algebras},
Comm. Math. Phys. {\bf133} (1990), no. 2, 249--260.

\bibitem{K91} \bysame,
{\it On crystal bases of the $q$-analogue of universal enveloping algebras},
Duke Math. J. {\bf63} (1991), no. 2, 465--516.



\bibitem{K93} \bysame,
{\it The crystal base and Littelmann's refined Demazure character formula},
Duke Math. J. {\bf71} (1993), no. 3, 839--858.


\bibitem{KS97} M. Kashiwara and Y. Saito,
{\it Geometric construction of crystal bases},
Duke Math. J. {\bf89} (1997), no. 1, 9--36.


\bibitem{L90} G. Lusztig,
{\it Canonical bases arising from quantized enveloping algebras},
J. Amer. Math. Soc. {\bf3} (1990), no. 2, 447--498.


\bibitem{L91} \bysame,
{\it Quivers, perverse sheaves, and quantized enveloping algebras},
J. Amer. Math. Soc. {\bf4} (1991), no. 2, 365--421.



\bibitem{MM90} K.C. Misra and T. Miwa,
{\it Crystal base for the basic representation of  $U_q(\hat{sl}(n))$},
Comm. Math. Phys. {\bf134} (1990), no. 1, 79--88.



\bibitem{N94} H. Nakajima,
{\it Instantons on ALE spaces, quiver varieties, and Kac-Moody algebras},
Duke Math. J. {\bf76} (1994), no. 2, 365--416.


\bibitem{N98} \bysame,
{\it Quiver varieties and Kac-Moody algebras},
Duke Math. J. {\bf91} (1998), no. 3, 515--560.

\bibitem{St02} Y. Saito,
{\it Crystal bases and quiver varieties},
Math. Ann. {\bf324} (2002), no. 4, 675--688.


\bibitem{Sv06} A. Savage,
{\it Geometric and combinatorial realizations of crystal graphs},
Algebr. Represent. Theory {\bf9} (2006), no. 2, 161--199.




\end{thebibliography}

\end{document}